\theoremstyle{plain}
\newtheorem{thm}{Theorem}[section] 
\newtheorem{cor}[thm]{Corollary}
\newtheorem{prop}[thm]{Proposition}
\newtheorem{lem}[thm]{Lemma}
\theoremstyle{definition} 
\newtheorem{defn}[thm]{Definition}
\newtheorem*{notation}{Notation}
\theoremstyle{remark}
\newtheorem{rem}[thm]{Remark}
\newtheorem{ques}[thm]{Question}
\newtheorem*{cl}{Claim}
\newtheorem*{acknowledgement}{Acknowledgments}
\def\ge{\geqslant}
\def\le{\leqslant}
\def\phi{\varphi}
\def\epsilon{\varepsilon}
\def\tilde{\widetilde}
\def\to{\longrightarrow}
\def\mapsto{\longmapsto}
\newcommand{\sO}{\mathcal{O}}
\newcommand{\Q}{\mathbb{Q}} 
\newcommand{\C}{\mathbb{C}}
\newcommand{\PP}{\mathbb{P}}
\newcommand{\m}{\mathfrak{m}}
\newsavebox{\circlebox}
\savebox{\circlebox}{\fontencoding{OMS}\selectfont\Large\char13}
\newlength{\circleboxwdht}
\def\Hom{\operatorname{Hom}}
\def\Spec{\operatorname{Spec}}
\def\depth{\operatorname{depth}}
\title{Weak Akizuki-Nakano vanishing theorem\\ for globally $F$-split 3-folds}
\author{Kenta Sato}
\address{Faculty of Mathematics, Kyushu University, 744 Motooka, Nishi-ku, Fukuoka 819-
0395, Japan}
\email{ksato@math.kyushu-u.ac.jp}
\author{Shunsuke Takagi}
\address{Graduate School of Mathematical Sciences, University of Tokyo, 3-8-1 Komaba, Meguro-ku, Tokyo 153-8914, Japan}
\email{stakagi@ms.u-tokyo.ac.jp}
\thanks{}
\keywords{}
\subjclass[2010]{13A35, 14F17, 14J45, 14D15}
\dedicatory{Dedicated to Professor~Bernd~Ulrich on the occasion of his sixty-fifth birthday.}
\begin{document}

\begin{abstract}
In this paper, we prove that a weak form of the Akizuki-Nakano vanishing theorem holds on globally $F$-split 3-folds. 
Making use of this vanishing theorem, we study deformations of globally $F$-split Fano 3-folds and the Kodaira vanishing theorem for thickenings of locally complete intersection globally $F$-regular 3-folds. 
\end{abstract}

\maketitle

\markboth{K.~SATO and S.~TAKAGI}{AKIZUKI-NAKANO VANISHING FOR GLOBALLY $F$-SPLIT $3$-FOLDS}

\section{Introduction}
The Akizuki-Nakano vanishing theorem is a generalization of the Kodaira vanishing theorem and plays  an important role in complex algebraic geometry. 
It was generalized to singular varieties in the following form. 
\begin{thm}\label{AN char.0 intro}
Let $X$ be a $d$-dimensional complex projective variety and $\mathcal{L}$ be an ample line bundle on $X$. 
\begin{enumerate}
\item[\textup{(1)}] \textup{(\cite{GKP}, \cite{KSS})} If $X$ is Cohen-Macaulay log canonical of dimension $d \le 3$, then 
for every nonnegative integers $i, j$ with $i+j<d$, one has 
\[
H^i(X, \Omega_X^{[j]} \otimes \mathcal{L}^{-1})=0, 
\]
where $\Omega_X^{[j]}$ is the reflexive hull of $\Omega_X^j=\bigwedge^j \Omega_{X/\C}$. 
\item[\textup{(2)}] \textup{(\cite{Hamm})} Let $s$ denote the dimension of the singular locus of $X$. 
If $X$ is a locally complete intersection, then 
for every nonnegative integers $i, j$ with $i+j<d-s$, one has 
\[
H^i(X, \Omega_X^j \otimes \mathcal{L}^{-1})=0.  
\]
\end{enumerate}
\end{thm}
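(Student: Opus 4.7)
The theorem combines two rather different statements, so I would treat each separately. In both cases the strategy is to pull the problem back to a smooth model, apply a classical Akizuki--Nakano-type vanishing there, and then push back down, controlling the discrepancy through depth or Cohen--Macaulay hypotheses.

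For (1), I would take a log resolution $\pi \colon Y \to X$ with reduced simple normal crossings exceptional divisor $E$ and attack the problem in three steps. First, use the extension theorem for reflexive differentials on log canonical pairs (Greb--Kebekus--Kov\'acs--Peternell, together with its log canonical refinements) to obtain a natural isomorphism $\Omega_X^{[j]} \cong \pi_* \Omega_Y^j(\log E)$. Second, apply the logarithmic Akizuki--Nakano theorem of Esnault--Viehweg on the smooth pair $(Y, E)$ to $\pi^* \mathcal{L}$; since $\pi^* \mathcal{L}$ is big and nef, this gives $H^i(Y, \Omega_Y^j(\log E) \otimes \pi^* \mathcal{L}^{-1}) = 0$ for $i + j < d$, after a Kawamata--Viehweg-style perturbation to handle the failure of ampleness. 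Third, run the Leray spectral sequence for $\pi$; the main work is to show that the higher direct images $R^i\pi_*\Omega_Y^j(\log E) \otimes \mathcal{L}^{-1}$ contribute nothing in the required range. Here the hypotheses bite hard: $d \le 3$ forces the exceptional fibers to have dimension at most $2$, and the Cohen--Macaulay condition on $X$, together with local duality and depth estimates, pins down the support and cohomological range of those higher direct images.

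For (2), I would exploit the fact that locally $X$ embeds as a regular sequence into a smooth $P$, giving a locally free conormal sheaf $\mathcal{N}_{X/P}^\vee$ and a conormal sequence whose wedge powers produce a finite Koszul-type filtration of $\Omega_X^j$ by sheaves built from $\Omega_P^k|_X$. The plan is to combine Akizuki--Nakano on the smooth ambient (or on the smooth locus $X_{\mathrm{sm}}$) with a depth argument calibrated by $s = \dim \Sing(X)$: because $X$ is LCI, the sheaves $\Omega_X^j$ are Cohen--Macaulay away from $\Sing(X)$, so local cohomology supported on the singular locus vanishes in the homological range that corresponds to $i + j < d - s$. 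Inducting on $j$ via the Koszul filtration then propagates the vanishing from the smooth ambient case into the required range on $X$.

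The main obstacle in (1) is genuinely the higher direct images $R^{>0}\pi_*\Omega_Y^j(\log E)$: for the top-degree case $j = d$ one has Grauert--Riemenschneider vanishing for free, but for intermediate $j$ there is no such clean unconditional statement, and one must use both the restriction $d \le 3$ and the Cohen--Macaulay hypothesis in an essential, combined way. In (2), the delicate point is matching the depth bookkeeping in the Koszul spectral sequence to the numerical bound $d - s$; the inequality $i + j < d - s$ must be exactly what eliminates all contributions from the singular stratum, which requires carefully tracking the codimension through each step of the induction.
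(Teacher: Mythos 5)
This statement is quoted in the paper's introduction as background, with the proofs delegated to the references: part (1) to \cite{GKP} and \cite{KSS}, part (2) to \cite{Hamm}. The paper contains no proof of its own to compare against, so I can only assess your outline on its merits. Your plan for (2) does follow the route of Hamm's actual argument --- the conormal sequence, a Koszul-type filtration, and the depth estimate $\depth(\Omega^i_{R/k})\ge n-i$ for complete intersections (this last is exactly \cite[Theorem 1.1]{Hamm}, which the authors themselves import as Lemma \ref{depth} and exploit in \S\ref{thickening section}). It is a reasonable skeleton, with the caveat that the ``depth bookkeeping'' you defer to the end is essentially the whole proof rather than a final verification.

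For (1) there is a genuine gap at your second step. After pulling back to a log resolution, $\pi^*\mathcal{L}$ is only nef and big, and the Akizuki--Nakano-type vanishing $H^i(Y,\Omega_Y^j(\log E)\otimes\pi^*\mathcal{L}^{-1})=0$ for all $i+j<d$ is \emph{false} for nef and big line bundles: Ramanujam's example already shows that Akizuki--Nakano does not extend from ample to nef and big (even globally generated) line bundles. The ``Kawamata--Viehweg-style perturbation'' you invoke rescues only the extreme cases $j=0$ and $j=\dim Y$, where the statement reduces to a vanishing of canonical-bundle type; it does nothing for the intermediate sheaves $\Omega_Y^1(\log E)$ and $\Omega_Y^2(\log E)$, which are precisely the cases at issue. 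The correct Esnault--Viehweg refinement for semiample big bundles only yields vanishing in a range degraded by the maximal fiber dimension of the morphism defined by the bundle, which for a resolution of a $3$-fold can leave essentially nothing. The actual proofs in \cite{GKP} and \cite{KSS} do not take this route: they work with the Du Bois complex and Steenbrink-type vanishing of specific higher direct images such as $R^q\pi_*\Omega_Y^j(\log E)(-E)$, and the restriction $d\le 3$ is what allows the surviving Leray terms to be killed. You correctly identify the higher direct images as ``the main obstacle,'' but the outline supplies no mechanism to overcome it and relies on a vanishing on $Y$ that does not hold, so as written the argument for (1) does not close.
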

In this paper, we pursue a characteristic $p$ analog of Theorem \ref{AN char.0 intro}. 
Let $X$ be a projective variety over a perfect field $k$ of characteristic $p>0$. 
Since the Kodaira vanishing theorem fails in positive characteristic in general, we need to impose an additional condition on $X$. 
We assume that $X$ is Cohen-Macaulay and globally $F$-split, that is, the Frobenius map $\sO_X \to F_*\sO_X$ splits as an $\sO_X$-module homomorphism. Then $X$ satisfies the Kodaira vanishing theorem. 
Moreover, if $X$ is smooth, then it lifts to the ring $W_2(k)$ of Witt vectors of length 2 (see \cite[p.164]{Il2} or \cite[Corollary 9.2]{Ki}) and consequently satisfies the Akizuki-Nakano vanishing theorem when $p \ge \dim X$ by a result of Raynaud \cite[Corollaire 2.8]{DI}. 

On the other hand, very little is known when $X$ is singular. 
Kawakami \cite{Kaw} recently proved that the Akizuki-Nakano vanishing theorem holds on globally $F$-regular surfaces, using Graf's extension theorem \cite{Gr}. 
Here, globally $F$-regular varieties are a class of globally $F$-split varieties containing toric and Schubert varieties. 
Several Kodaira-type vanishing theorem are known to hold on globally $F$-regular varieties (see \cite{GT}, \cite{SS}).
In this paper, we investigate the Akizuki-Nakano vanishing theorem for globally $F$-split or globally $F$-regular 3-folds. 
Our main result is stated as follows. 

\begin{thm}[Theorem \ref{AN vanishing}, Remark \ref{GFR case1}, Corollary \ref{GFR case2}]\label{main vanishing}
Let $X$ be a projective globally $F$-regular variety over a perfect field of characteristic $p>0$ and $\mathcal{L}$ be a globally generated ample line bundle on $X$. 
Assume in addition that $X$ is of dimension $d \le 3$ and the morphism $\Phi_{\mathcal{L}}:X \to \PP^N=\PP(H^0(X, \mathcal{L})^*)$ induced by $\mathcal{L}$ is generically \'etale on its image.\footnote{The latter condition is satisfied for example if $\mathcal{L}$ is very ample.} 
\begin{enumerate}
\item[\textup{(1)}] If $X$ has only isolated singularities, then for every nonnegative integers $i, j$ with $i+j<d$, one has 
\[
H^i(X, \Omega_X^{[j]} \otimes \mathcal{L}^{-1})=0. 
\]
\item[\textup{(2)}] Let $s$ denote the dimension of the singular locus of $X$. 
If $X$ is a locally complete intersection and $p \ge 5$, then for every nonnegative integers $i, j$ with $i+j<d-s$, one has 
\[
H^i(X, \Omega_X^j \otimes \mathcal{L}^{-1})=0.  
\]
\end{enumerate}
\end{thm}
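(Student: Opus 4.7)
The plan is to induct on $d = \dim X$. The base case $d \leq 2$ is supplied by Kawakami's Akizuki-Nakano vanishing for globally $F$-regular surfaces \cite{Kaw}; note that a globally $F$-regular surface is normal, hence has only isolated singularities, so parts (1) and (2) coincide when $d = 2$. For the inductive step $d = 3$, the plan is to use a generic hyperplane section argument, and the role of the hypothesis that $\Phi_{\mathcal{L}}$ is generically \'etale on its image is precisely to furnish a Bertini-type smoothness theorem in positive characteristic.

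More explicitly, the hypotheses that $\mathcal{L}$ is globally generated and $\Phi_{\mathcal{L}}$ is generically \'etale on its image imply that a general $H \in |\mathcal{L}|$ is irreducible and smooth along the smooth locus of $X$. A Schwede--Smith-type preservation theorem then ensures that $H$ is again globally $F$-regular, and in part (2) remains locally complete intersection with singular locus of dimension $s-1$. In part (1), since $\Sing(X)$ is zero-dimensional and $\dim H = 2$, a general $H$ is actually smooth; in part (2), the inductive hypothesis applies to $H$ in the appropriate range.

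The inductive step then combines the hyperplane-section short exact sequence
\[
0 \to \Omega^{[j]}_X \otimes \mathcal{L}^{-m-1} \to \Omega^{[j]}_X \otimes \mathcal{L}^{-m} \to \Omega^{j}_X|_H \otimes \mathcal{L}^{-m}|_H \to 0,
\]
exact because $\Omega^{[j]}_X$ is torsion-free (in part (2) one uses $\Omega^j_X$, which is torsion-free on the lci variety), with the conormal sequence
\[
0 \to \Omega^{j-1}_H \otimes \mathcal{L}^{-1}|_H \to \Omega^{j}_X|_H \to \Omega^{j}_H \to 0
\]
on the smooth locus of $H$. Kawakami's vanishing on $H$ applied to the ample twists $\mathcal{L}^m|_H$ ($m \ge 1$) forces isomorphisms
\[
H^i(X, \Omega^{[j]}_X \otimes \mathcal{L}^{-m-1}) \cong H^i(X, \Omega^{[j]}_X \otimes \mathcal{L}^{-m})
\]
in the relevant range of $(i,j)$ for every $m \ge 1$. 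Since globally $F$-regular implies Cohen--Macaulay, Serre--Grothendieck vanishing makes the right-hand side zero for $m \gg 0$, and chasing the isomorphisms down to $m=1$ yields the desired vanishing in that range.

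The main obstacle is that a direct conormal-sequence analysis only controls the appropriate cohomology on $H$ for $i+j < d-1$; the boundary case $i+j = d-1$ requires extra input. When $j=0$ the vanishing comes from Kodaira on the globally $F$-split variety $X$. For $j > 0$, one may dualize via Serre duality to the positive Akizuki-Nakano statement and apply Frobenius-splitting techniques (for instance, the Cartier operator acting on reflexive differentials), or alternatively invoke a resolution of singularities (available in dimension $\le 3$) together with a logarithmic Akizuki-Nakano vanishing on the resolution. The hypothesis $p \ge 5$ in part (2) presumably enters at this boundary step, either in adapting Frobenius splittings to the positive-dimensional singular locus or in ensuring that the Schwede--Smith-type preservation works in the lci setting.
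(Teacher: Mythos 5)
Your outline diverges substantially from the paper's argument, and it has two genuine gaps that I do not see how to close. First, the hyperplane induction requires a general member $H \in |\mathcal{L}|$ that is again globally $F$-regular, so that Kawakami's surface theorem applies to it. There is no ``Schwede--Smith-type preservation theorem'' giving this: global $F$-regularity (or even global $F$-splitness) does not pass to general members of an ample linear system by any general principle, and the paper itself, in the one place it needs a global $F$-split hyperplane section (Corollary \ref{Fano Lefschetz}), has to invoke an unpublished result of Hara, respectively \cite[Theorem 5.8]{DN}, applied to the anticanonical section ring --- and even then it only gets global $F$-\emph{splitness} of a general \emph{anticanonical} member. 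Relatedly, your Bertini step is shaky: generic \'etaleness of $\Phi_{\mathcal{L}}$ only makes a general $H$ smooth away from the ramification divisor of $\Phi_{\mathcal{L}}$, which can be positive-dimensional, so $H$ need not be smooth even when $\Sing(X)$ is finite. The paper avoids hyperplane induction entirely; the generically \'etale hypothesis is used instead to force $\Omega_{X/\PP^N}$ to have proper closed support, so that its top cohomology vanishes in an Euler-sequence computation (Proposition \ref{vanishing (2,0)}).

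Second, and more seriously, the cases you defer as ``boundary'' --- for $d=3$ these are $(i,j)=(0,2)$ and $(1,1)$ --- are the entire content of the theorem beyond Kodaira vanishing, and you only gesture at a toolbox rather than giving an argument. The paper's actual proof is: for $(0,2)$, Serre duality converts the statement to $H^3(X,\Omega_X^{[1]}\otimes\mathcal{F}\otimes\mathcal{G})=0$, which is killed by the pulled-back Euler sequence and the generic \'etaleness (Proposition \ref{vanishing (2,0)}); for $(1,1)$, the Cartier-operator exact sequences together with the splitting of $\sO_X \to F_*\sO_X$ bootstrap $H^1(X,\Omega_X^{[1]}\otimes\mathcal{L}^{-1})$ down from $H^1(X,\Omega_X^{[1]}\otimes\mathcal{L}^{\otimes(-p^e)})$, which vanishes for $e \gg 0$ by a Serre-type vanishing for reflexive sheaves (Proposition \ref{vanishing (1,1)}, Lemma \ref{reflexive serre vanishing}). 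Finally, you misidentify where $p \ge 5$ enters in part (2): it is needed for the existence of terminal models of klt 3-fold pairs in characteristic $p\ge 5$ (Birkar, Hacon--Witaszek), which the paper uses to replace $X$ by a crepant model with isolated Cohen--Macaulay singularities when $s=1$ (Proposition \ref{Bogomolov}); it has nothing to do with hyperplane sections or with adapting Frobenius splittings.
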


Making use of (a variant of) Theorem \ref{main vanishing}, we study properties of globally $F$-split Fano 3-folds. 
In particular, we focus on deformations of globally $F$-split Fano 3-folds with only isolated complete intersection singularities. 
The following theorem should be compared with a result of Namikawa \cite{Na} about deformations of complex Fano 3-folds with only Gorenstein terminal singularities. 
\begin{thm}[Theorem \ref{namikawa result1}, Theorem \ref{namikawa result2}]\label{deform Intro}
Let $X$ be a projective globally $F$-split Fano 3-fold over an algebraically closed field of characteristic $p>0$ which has only isolated complete intersection singularities.  
Suppose that there exists an integer $1 \le i_0 \le p$ such that $|-i_0K_X|$ is base point free and the morphism $\varphi_{|-i_0K_X|}:X \to \mathbb{P}^N=\mathbb{P}(H^0(X,\sO_X(-i_0K_X))^*)$ induced by $|-i_0K_X|$ is generically \'etale on its image.\footnote{This condition is satisfied for example if $|-K_X|$ is base point free and $p \ge 5$.} 
\begin{enumerate}
\item[\textup{(1)}] The deformations of $X$ are unobstructed and, in particular, $X$ is liftable to characteristic zero. 
\item[\textup{(2)}] If $X$ has only ordinary double points, then $H^2(X,T_X)=0$ and, in particular, $X$ admits a smoothing. 
\end{enumerate}
\end{thm}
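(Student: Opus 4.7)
The strategy mirrors Namikawa's characteristic-zero argument \cite{Na}, with Theorem \ref{main vanishing} (or a slight strengthening) playing the role of the complex-analytic Akizuki-Nakano vanishing. The core of both parts is the single vanishing
\[
H^2(X, T_X) = 0.
\]
Since $X$ is lci with isolated singularities in dimension $3$, it is normal and Gorenstein, so reflexive Serre duality provides
\[
T_X \;=\; \mathcal{H}om_{\sO_X}(\Omega^1_X, \sO_X) \;\cong\; \Omega^{[2]}_X \otimes \omega_X^{-1},
\]
and then Serre duality (valid since $X$ is projective Gorenstein) gives
\[
H^2(X, \Omega^{[2]}_X \otimes \omega_X^{-1}) \;\cong\; H^1(X, \Omega^{[1]}_X \otimes \omega_X)^\vee.
\]
When $i_0 = 1$, the line bundle $\omega_X^{-1}$ is the ample, globally generated, generically \'etale-mapping bundle of the hypothesis, so Theorem \ref{main vanishing}(1) applied with $(d,i,j) = (3,1,1)$ (inside the AN range $i+j < 3$) yields the required vanishing at once. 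When $i_0 > 1$ the bundle $\omega_X^{-1}$ is ample but need not be globally generated, and one must instead invoke the variant of Theorem \ref{main vanishing} alluded to in the statement: the same AN-type vanishing under the weaker hypothesis that only a positive multiple of the ample bundle is globally generated with generically \'etale morphism. Extracting or proving this variant is where the main technical work lies.

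Granted $H^2(X, T_X) = 0$, part (1) is a standard local-to-global calculation in cotangent cohomology. The lci hypothesis gives $\mathcal{T}^q_X = 0$ for $q \ge 2$, and the isolated singularities force $\mathcal{T}^1_X$ to be a skyscraper, so $H^p(X, \mathcal{T}^1_X) = 0$ for $p \ge 1$. The local-to-global spectral sequence
\[
E_2^{p,q} = H^p(X, \mathcal{T}^q_X) \;\Longrightarrow\; T^{p+q}(X)
\]
then identifies the obstruction space $T^2(X)$ with the cokernel of $d_2 \colon H^0(X, \mathcal{T}^1_X) \to H^2(X, T_X)$, and this cokernel vanishes since its target does. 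Hence $T^2(X) = 0$, so deformations of $X$ are unobstructed.

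For part (2), the vanishing $H^2(X, T_X) = 0$ is the first assertion. The same spectral sequence, combined with this vanishing, shows that the forgetful map $T^1(X) \to H^0(X, \mathcal{T}^1_X) = \bigoplus_i T^1_{X, x_i}$ is surjective. Since each ordinary double point $x_i$ admits a well-known local smoothing corresponding to a nonzero element of the one-dimensional $T^1_{X, x_i}$, one can choose a first-order global deformation whose local component at every $x_i$ is a smoothing direction; by the unobstructedness from part (1) this extends to a formal deformation over $k[[t]]$, and Grothendieck's formal existence theorem upgrades it to an algebraic smoothing of $X$. Liftability to characteristic zero follows in the same spirit: the obstruction to lifting $X$ from $W_n(k)$ to $W_{n+1}(k)$ lies in $H^2(X, T_X) = 0$ and so vanishes inductively, producing a formal lift over $W(k)$; the obstruction to lifting the polarization $\omega_X^{-i_0}$ lies in $H^2(X, \sO_X) = 0$ (Serre duality combined with Kodaira vanishing for the globally $F$-split Fano $X$), and Grothendieck's formal existence theorem algebraizes the resulting polarized formal lift into a projective scheme over $W(k)$.
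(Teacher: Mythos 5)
The central step of your argument --- the identification $H^2(X,T_X)\cong H^1(X,\Omega^{[1]}_X\otimes\omega_X)^\vee$ --- is not valid, and this breaks both halves of the proposal. On a projective Gorenstein $3$-fold, Serre duality for a coherent sheaf $\mathcal{F}$ that is not locally free reads $H^2(X,\mathcal{F})^\vee\cong\mathrm{Ext}^1_X(\mathcal{F},\omega_X)$, \emph{not} $H^1(X,\mathcal{H}\mathrm{om}(\mathcal{F},\omega_X))$. For $\mathcal{F}=T_X=\Omega^{[2]}_X\otimes\omega_X^{-1}$ the local-to-global Ext spectral sequence gives
\[
0\to H^1(X,\Omega^{[1]}_X\otimes\omega_X)\to\mathrm{Ext}^1_X(T_X,\omega_X)\to H^0(X,\mathcal{E}\mathrm{xt}^1(T_X,\omega_X))\to\cdots,
\]
and the skyscraper term $H^0(X,\mathcal{E}\mathrm{xt}^1(T_X,\omega_X))$, supported at the singular points, does not vanish in general: at a singular point $x$ it is Matlis dual to $H^2_{\m}(T_{X,x})$, and the reflexive module $T_{X,x}$ need not be maximal Cohen--Macaulay. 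Indeed the conclusion you draw --- $H^2(X,T_X)=0$ for \emph{every} isolated lci Fano $3$-fold --- is false: already over $\C$ there are Fano $3$-folds with Gorenstein terminal (hence isolated lci) singularities and $H^2(X,T_X)\neq 0$, even though $H^1(X,\Omega^{[1]}_X\otimes\omega_X)=0$ holds by the characteristic-zero Akizuki--Nakano-type vanishing; this is precisely why Namikawa's smoothing argument for general Gorenstein terminal singularities cannot go through $H^2(T_X)=0$, and why part (2) of the statement is restricted to ordinary double points.

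The paper sidesteps this in two different ways. For (1) it never computes $H^2(X,T_X)$: it proves the weaker vanishing $\mathrm{Ext}^2(\Omega_X,\sO_X)\cong H^1(X,\Omega_X\otimes\omega_X)^\vee=0$ --- here global Ext-duality is applied correctly, with the Ext group on the dual side --- and then invokes Vistoli's result that for lci schemes the obstructions lie in $\mathrm{Ext}^2(\Omega_X,\sO_X)$, so no local-to-global cotangent spectral sequence is needed. For (2) it uses the ordinary-double-point hypothesis to build a small resolution $f:G\to X$ by a smooth proper algebraic space with $R^1f_*\sO_G=R^1f_*T_G=0$ (Proposition \ref{prop on a1}); then $H^2(X,T_X)\cong H^2(G,T_G)$, and Serre duality is applied on the \emph{smooth} $G$, where it is unproblematic, reducing the claim to $H^1(G,\Omega_G\otimes f^*\omega_X)=0$ (Proposition \ref{relative vanishing}). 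This small-resolution step is the real content of part (2) and is entirely absent from your proposal. A minor point: your worry about $i_0>1$ is unnecessary, since Proposition \ref{vanishing (1,1)} is already stated under the hypothesis that some power $\mathcal{L}^{\otimes i_0}$ with $1\le i_0\le p$ is globally generated and induces a generically \'etale morphism, so no new variant of the vanishing theorem has to be proved.
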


Since the modulo $p$ reduction of a complex Fano 3-fold with only Gorenstein terminal singularities is a globally $F$-split 3-fold with only isolated complete intersection singularities for sufficiently large primes $p$ (see \cite[Theorem 1.2]{SS}), Theorem \ref{deform Intro} gives a purely algebraic proof of the result of Namikawa. 

As another application of Theorem \ref{main vanishing}, we study the Kodaira vanishing theorem for thickenings. 
Bhatt-Blickle-Lyubeznik-Singh-Zhang \cite{BBLSZ} proved that the Kodaira vanishing theorem holds on thickenings of a locally complete intersection closed subvariety of the projective space $\PP^N_k$ over a field $k$ of characteristic zero. 
The following is a characteristic $p$ analog of their result in dimension 3. 
\begin{thm}[Theorem \ref{thicknings in char. p}]
Let $X$ be a locally complete intersection closed subvariety of the projective space $\PP^N_k$ over a perfect field $k$ of characteristic $p \ge 5$. 
For each integer $t \ge 1$, let $X_t \subseteq \PP^N_k$ be the $t$-th thickening of $X$, that is, the closed subscheme defined by $\mathcal{I}^t$, where $\mathcal{I} \subseteq \sO_{\PP^N_k}$ is the defining ideal sheaf of $X$ in $\PP^N_k$. 
Suppose that $X$ is globally $F$-regular of dimension $d \le 3$. 
Assume in addition that one of the following conditions is satisfied: 
\begin{enumerate}
\item [$\textup{(i)}$] $t$ is less than or equal to $p$, 
\item [$\textup{(ii)}$] the normal bundle $\mathcal{N}_{X/\PP^N}$ is a direct sum of line bundles.
\end{enumerate}
Then for every integers $i < d-s$ and $\ell \ge 1$, we have 
\[H^i(X_t, \sO_{X_t}(-\ell))=0,\] 
where $s$ is the dimension of the singular locus of $X$.
\end{thm}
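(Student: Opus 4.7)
The plan is to induct on $t$. For the base case $t=1$, we have $X_1 = X$, and $H^i(X, \sO_X(-\ell))=0$ for $i < d-s$ and $\ell \ge 1$ follows from Theorem \ref{main vanishing}(2) applied with $j=0$ and the very ample line bundle $\mathcal{L}=\sO_X(\ell)$. For the inductive step, since $X\subseteq \PP^N_k$ is regularly embedded (being lci), the conormal filtration yields the canonical isomorphism $\mathcal{I}^t/\mathcal{I}^{t+1}\cong\operatorname{Sym}^t_{\sO_X}(\mathcal{N}^*_{X/\PP^N})$. Twisting the standard exact sequence
\[
0 \to \mathcal{I}^t/\mathcal{I}^{t+1} \to \sO_{X_{t+1}} \to \sO_{X_t} \to 0
\]
by $\sO_{\PP^N_k}(-\ell)$ and combining its long exact sequence of cohomology with the inductive hypothesis applied to $\sO_{X_t}(-\ell)$, the problem reduces to proving
\[
H^i(X, \operatorname{Sym}^t \mathcal{N}^*_{X/\PP^N}\otimes \sO_X(-\ell)) = 0 \quad \text{for } i < d-s \text{ and } \ell \ge 1. \qquad (\ast)
\]

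In case (ii), $(\ast)$ follows directly from Theorem \ref{main vanishing}(2). Each summand $\mathcal{L}_j$ of $\mathcal{N}_{X/\PP^N}$ is globally generated, being a direct summand of $\mathcal{N}_{X/\PP^N}$, which is itself a quotient of $\sO_X(1)^{\oplus N+1}$ via the Euler sequence and the surjection $T_{\PP^N}|_X\twoheadrightarrow \mathcal{N}_{X/\PP^N}$. Thus $\operatorname{Sym}^t\mathcal{N}^*_{X/\PP^N}\otimes\sO_X(-\ell)$ decomposes as a finite direct sum, over multi-indices $\alpha$ with $|\alpha|=t$, of the duals of the line bundles $\mathcal{M}_\alpha:=\bigotimes_j \mathcal{L}_j^{\otimes \alpha_j}\otimes\sO_X(\ell)$. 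Each $\mathcal{M}_\alpha$, being a tensor product of globally generated line bundles with the very ample line bundle $\sO_X(\ell)$, is itself very ample, so Theorem \ref{main vanishing}(2) with $j=0$ applied to each $\mathcal{M}_\alpha$ gives $(\ast)$.

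Case (i), $t \le p$, is the main obstacle: without a splitting of the normal bundle, one must access $\operatorname{Sym}^t\mathcal{N}^*_{X/\PP^N}$ intrinsically. Note that by the induction scheme, to reach $X_p$ one only needs $(\ast)$ for $1\le t\le p-1$, so $t!\ne 0$ in $k$. The plan is to combine the conormal exact sequence
\[
0\to\mathcal{N}^*_{X/\PP^N}\to\Omega_{\PP^N}|_X\to\Omega_X\to 0
\]
with the Euler sequence on $\PP^N$ to construct a filtration of a sheaf containing $\operatorname{Sym}^t\mathcal{N}^*_{X/\PP^N}\otimes\sO_X(-\ell)$ whose graded pieces are of the form $\Omega^j_X\otimes\sO_X(-m)$ and $\Omega^j_{\PP^N}|_X\otimes\sO_X(-m)$. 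The former are controlled by Akizuki-Nakano vanishing (Theorem \ref{main vanishing}(2)), while the latter reduce, via the Koszul resolution of $\Omega^j_{\PP^N}$ by twists of $\sO_{\PP^N}$, to Kodaira-type vanishing on $X$ (Theorem \ref{main vanishing}(2), $j=0$). The hypothesis $t\le p$ is expected to enter through $t!\ne 0$, which allows one to split $\operatorname{Sym}^t$ off the tensor power $(\mathcal{N}^*_{X/\PP^N})^{\otimes t}$ via the normalized symmetrizer and to use Schur--Weyl decompositions semisimply. The hardest technical point will be to avoid $\operatorname{Sym}^j\Omega_X$-type pieces, which are not directly amenable to Akizuki-Nakano: a naive Koszul filtration of $\operatorname{Sym}^t(\Omega_{\PP^N}|_X)$ produces precisely such pieces, so a more refined filtration---obtained by iterating the conormal and Euler sequences combined with a Schur-functor decomposition valid when $t<p$---will be required.
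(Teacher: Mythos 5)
Your induction on $t$ and the reduction to the vanishing of $H^i(X, S^t(\mathcal{I}/\mathcal{I}^2)(-\ell))$ is exactly how the paper's Theorem \ref{vanishing thickening} begins, but the proposal has two problems. First, your shortcut in case (ii) rests on the claim that $T_{\PP^N}|_X \to \mathcal{N}_{X/\PP^N}$ is surjective, hence that each summand $\mathcal{L}_j$ is globally generated. For a singular locally complete intersection this map is \emph{not} surjective: its cokernel is $\mathcal{E}\mathrm{xt}^1(\Omega_X,\sO_X)$, supported exactly on the singular locus (already for a singular hypersurface $V(f)$ the map $T_{\PP^N}|_X \to \sO_X(\deg f)$ is given by the partials of $f$, which all vanish at the singular points). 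So the ampleness of your $\mathcal{M}_\alpha$ is unjustified. The paper never uses any positivity of $\mathcal{N}_{X/\PP^N}$: in case (ii) the splitting is used only to identify $S^{t-1}(\mathcal{I}/\mathcal{I}^2)$ with the divided power $D_{t-1}(\mathcal{I}/\mathcal{I}^2)$ via Lemma \ref{div sym}(c), after which the argument is the same as in case (i).

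Second, and more importantly, case (i) is left open: you correctly diagnose that a naive symmetric-power Koszul filtration produces $S^j\Omega_X$-type pieces and write that ``a more refined filtration will be required,'' but that refined object is the entire content of the proof. The paper's answer (Proposition \ref{divided cpx}) is the dual of the Koszul complex of Lemma \ref{Koszul}: applied to $0 \to \mathcal{I}/\mathcal{I}^2 \to \Omega_{\PP^N}|_X \to \Omega_X \to 0$ it yields a complex
\[
0 \to D_n(\mathcal{I}/\mathcal{I}^2) \to D_n(\Omega_{\PP^N}|_X) \to D_{n-1}(\Omega_{\PP^N}|_X)\otimes\Omega_X \to D_{n-2}(\Omega_{\PP^N}|_X)\otimes\Omega_X^2 \to \cdots
\]
whose higher terms carry \emph{exterior} powers of $\Omega_X$ (amenable to the Akizuki--Nakano input of Corollary \ref{GFR case2}) tensored with divided powers of $\Omega_{\PP^N}|_X$ (reduced to direct sums of $\sO_X(-m)$ by the Euler sequence). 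The hypothesis $t\le p$ enters only through the identification $S^{t-1}\cong D_{t-1}$ of Lemma \ref{div sym}(b), not through any Schur--Weyl semisimplicity. Moreover this complex is exact only on the smooth locus, so one still needs Hamm's depth estimate $\depth_{\m}\Omega^i_{R/k}\ge n-i$ for complete intersections (Lemma \ref{depth}) to show that the boundaries $B^i$ are reflexive and coincide with the cycles for $i\le d-s-2$; this is what legitimizes the dimension shift $H^i(X, D_t(\mathcal{I}/\mathcal{I}^2)(-\ell)) \cong H^0(X, B^i(-\ell)) \subseteq H^0(X, C^i(-\ell))=0$. Neither the specific complex nor this depth argument appears in your plan, so the proposal does not constitute a proof of case (i).
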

We remark that our argument gives an alternative proof of the result of Bhatt-Blickle-Lyubeznik-Singh-Zhang in characteristic zero (see Remark \ref{alternative proof}). 

\begin{acknowledgement}
The authors are grateful to Kenta Hashizume, Tatsuro Kawakami, Yujiro Kawamata, Yoichi Miyaoka, Shigefumi Mori, Noboru Nakayama, Taro Sano, Vasudevan Srinivas and Bernd Ulrich for valuable comments and discussions. 
They also thank the anonymous referee for helpful comments.
The first author was supported by RIKEN iTHEMS Program.
The second author was partially supported by JSPS KAKENHI Grant Numbers JP15H03611, JP16H02141, and JP17H02831.
\end{acknowledgement}

\begin{notation}
Throughout this paper, all schemes are assumed to be Noetherian and separated. 
A variety is an integral scheme of finite type over a field and a 3-fold (resp. a curve, a surface) is a 3-dimensional (resp. 1-dimensional, 2-dimensional) variety. 
Given a variety $X$ over a field $k$, $T_X$ denotes the tangent sheaf of $X$, that is, the dual $\mathcal{H}\mathrm{om}(\Omega_X, \sO_X)$ of the cotangent sheaf $\Omega_X=\Omega_{X/k}$.
\end{notation}

\section{Preliminaries}
\subsection{Globally \texorpdfstring{$F$-split}{F-split} and globally \texorpdfstring{$F$-regular}{F-regular} varieties}
In this subsection, we recall the definitions and basic properties of globally $F$-split and globally $F$-regular varieties. 

\begin{defn}[\cite{BK}, \cite{HWY}, \cite{MR}, \cite{SS}, \cite{Sm2}]
Let $X$ be a variety over a perfect field of characteristic $p>0$.
\begin{enumerate}
\item[(i)] $X$ is said to be \textit{globally $F$-split}\footnote{Globally $F$-split varieties are often called simply $F$-split. However, we do not use this terminology in this paper, because it may be confused with locally $F$-split varieties.} if the Frobenius map $\sO_X \to F_*\sO_X$ splits as an $\sO_X$-module homomorphism. 
\end{enumerate}
We assume in addition that $X$ is normal, and let $\Delta=\sum_i d_i \Delta_i$ be an effective $\Q$-Weil divisor on $X$. 
For each integer $e \ge 1$, the round-up of $(p^e-1)\Delta$ is $\lceil (p^e-1)\Delta \rceil=\sum_i \lceil (p^e-1)d_i \rceil \Delta_i$, where $\lceil (p^e-1)d_i \rceil$ denotes the smallest integer greater than or equal to $(p^e-1)d_i$. 
Also, given an integer $e \ge 1$ and an effective Weil divisor $B$ on $X$, let $\varphi^{(e)}_B:\sO_X \to F^e_*\sO_X(B)$ denote the composite map 
\[
\sO_X \to F^e_*\sO_X \hookrightarrow F^e_*\sO_X(B),
\]
where $\sO_X \to F^e_*\sO_X$ is the $e$-times iterated Frobenius map and $F^e_*\sO_X \hookrightarrow F^e_*\sO_X(B)$ is the push-forward of the natural inclusion $\sO_X \hookrightarrow \sO_X(B)$ by the $e$-times iterated Frobenius morphism $F^e:X \to X$. 
\begin{enumerate}
\item[(ii)] The pair $(X, \Delta)$ is said to be \textit{globally sharply $F$-split} if there exists an integer $e \ge 1$ such that $\varphi^{(e)}_{\lceil (p^e-1)\Delta \rceil}:\sO_X \to F^e_*\sO_X(\lceil (p^e-1) \Delta \rceil)$
splits as an $\sO_X$-module homomorphism. 
$X$ is globally $F$-split if and only if the pair $(X,0)$ is globally sharply $F$-split. 
\item[(iii)] The pair $(X, \Delta)$ is said to be \textit{globally $F$-regular} if for every effective Weil divisor $D$ on $X$, there exists an integer $e \ge 1$ such that $\varphi^{(e)}_{\lceil (p^e-1)\Delta \rceil+D}:\sO_X \to F^e_*\sO_X(\lceil (p^e-1) \Delta \rceil+D)$
splits as an $\sO_X$-module homomorphism. 
We simply say that $X$ is \textit{globally $F$-regular} if so is the pair $(X, 0)$.
\end{enumerate}
\end{defn}

The Kodaira vanishing theorem fails in positive characteristic in general, but it holds on globally $F$-split varieties. 
\begin{prop}\label{kodaira}
Let $X$ be an $n$-dimensional globally $F$-split projective variety over a perfect field of characteristic $p>0$ and $\mathcal{L}$ be an ample line bundle on $X$. 
\begin{enumerate}
\item[$(1)$] $($\cite[Theorem 1.2.8]{BK}$)$ $H^i(X, \mathcal{L})=0$ for all $i \ge 1$. 
\item[$(2)$] $($\cite[Theorem 1.2.9]{BK}$)$ If $X$ is Cohen-Macaulay, then $H^i(X, \mathcal{L}^{-1})=0$ for all $i<n$. 
\end{enumerate}
\end{prop}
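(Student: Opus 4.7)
The plan is to exploit the Frobenius splitting $\sO_X \to F_*\sO_X$ to obtain split injections $H^i(X,\mathcal{L}) \hookrightarrow H^i(X,\mathcal{L}^{p^e})$ (and analogously for $\mathcal{L}^{-1}$) for every $e\ge 1$, and then to close the argument by Serre vanishing for $e\gg 0$, applied directly in case (1) and through Serre duality in case (2).

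For (1), fix a splitting $\sigma\colon F_*\sO_X\to \sO_X$ of the Frobenius. Tensoring the composite $\sO_X\to F_*\sO_X\xrightarrow{\sigma}\sO_X$ (which is the identity) with $\mathcal{L}$ and applying the projection formula, together with the identification $F^*\mathcal{L}\cong \mathcal{L}^{\otimes p}$, produces a split injection $\mathcal{L}\hookrightarrow F_*(\mathcal{L}^{\otimes p})$. Since $F$ is a finite (hence affine) morphism, one has $H^i(X,F_*\mathcal{L}^{\otimes p})=H^i(X,\mathcal{L}^{\otimes p})$, so this yields a split injection $H^i(X,\mathcal{L})\hookrightarrow H^i(X,\mathcal{L}^{\otimes p})$. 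Iterating $e$ times gives an injection $H^i(X,\mathcal{L})\hookrightarrow H^i(X,\mathcal{L}^{\otimes p^e})$, and for $i\ge 1$ and $e\gg 0$ the right-hand side vanishes by Serre vanishing.

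For (2), I run the same argument with $\mathcal{L}^{-1}$ in place of $\mathcal{L}$ to produce split injections $H^i(X,\mathcal{L}^{-1})\hookrightarrow H^i(X,\mathcal{L}^{-p^e})$. This is where the Cohen-Macaulay hypothesis enters: it guarantees a classical Serre duality
\[
H^i(X,\mathcal{L}^{-p^e})\;\cong\; H^{n-i}\!\bigl(X,\mathcal{L}^{\otimes p^e}\otimes \omega_X\bigr)^{\vee}.
\]
For $i<n$ we have $n-i\ge 1$, and $\mathcal{L}^{\otimes p^e}\otimes\omega_X$ is ample for $e\gg 0$, so Serre vanishing forces the right-hand side to be zero; hence $H^i(X,\mathcal{L}^{-1})=0$ for all $i<n$.

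There is essentially no hard step here: the argument is a well-known Mehta-Ramanathan-style Frobenius push-forward trick, and the only points needing minor care are the identification $F^*\mathcal{L}\cong\mathcal{L}^{\otimes p}$ that powers the projection formula, and the invocation of a coherent dualizing sheaf $\omega_X$ in part (2), which is precisely what Cohen-Macaulayness supplies. Everything else reduces to the ampleness of $\mathcal{L}$ (respectively $\mathcal{L}^{\otimes p^e}\otimes\omega_X$) together with Serre vanishing for sufficiently large Frobenius twists.
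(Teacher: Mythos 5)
Your proof is correct and is essentially the same argument as in the cited reference \cite[Theorems 1.2.8 and 1.2.9]{BK} (the paper gives no proof of its own, only the citation): the Frobenius splitting yields injections $H^i(X,\mathcal{L}^{\pm 1})\hookrightarrow H^i(X,\mathcal{L}^{\pm p^e})$, and one concludes by Serre vanishing, via Serre duality for the Cohen--Macaulay case. The only cosmetic point is that in (2) one should say ``$H^{n-i}(X,\omega_X\otimes\mathcal{L}^{\otimes p^e})=0$ for $e\gg 0$ by Serre vanishing for the coherent sheaf $\omega_X$'' rather than that $\omega_X\otimes\mathcal{L}^{\otimes p^e}$ is ample, since $\omega_X$ need not be invertible when $X$ is merely Cohen--Macaulay.
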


\begin{lem}[\textup{\cite[Lemma 2.14]{GOST}}]\label{small}
Let $X$ be a normal quasi-projective variety over a perfect field of characteristic $p>0$. 
Let $f:X \dashrightarrow Y$ be a small birational map or an algebraic fiber space to a normal variety $Y$. 
If $X$ is globally $F$-split $($resp. globally $F$-regular$)$, then so is $Y$.
\end{lem}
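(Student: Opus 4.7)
The plan is to push forward (twisted) Frobenius splittings from $X$ to $Y$. The essential inputs are (i) $f_*\sO_X = \sO_Y$---true by definition for algebraic fiber spaces and, for small birational maps, via the codimension-one isomorphism combined with normality of $Y$---and (ii) that the affine Frobenius $F^e$ commutes with $f_*$, giving $f_* F^e_* = F^e_* f_*$ on coherent sheaves.

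For the $F$-split statement, I would start from a splitting $\phi \colon F^e_*\sO_X \to \sO_X$ and apply $f_*$ to obtain $f_*\phi \colon F^e_*\sO_Y \to \sO_Y$, then verify $\mathrm{id}_{\sO_Y} = f_*(\mathrm{id}_{\sO_X}) = f_*\phi \circ (\text{Frobenius on } Y)$, so $Y$ is $F$-split. When $f$ is not a morphism (the small birational case), I would instead restrict the splitting to a common open $U \cong V$ in $X$ and $Y$ whose complements have codimension $\ge 2$ and extend it to $Y$ using reflexivity of $\sO_Y$ and $F^e_*\sO_Y$; the extension splits Frobenius on a dense open, hence globally by torsion-freeness of $\sO_Y$.

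For the $F$-regular statement, given an effective Weil divisor $D$ on $Y$, I would construct an effective Weil divisor $D_X$ on $X$ whose associated reflexive sheaf pushes forward to $\sO_Y(D)$ compatibly with the canonical Frobenius-twist maps. For small birational $f$ take $D_X$ to be the birational transform of $D$ under the codimension-one isomorphism. For a fiber space $f$, pull back $D$ as a Cartier divisor over the open subset $V \subseteq Y$ where $D$ is Cartier (its complement has codimension $\ge 2$ since $Y$ is normal) and take closures to define $D_X$; the projection formula then yields $f_*\sO_X(D_X)|_V = \sO_Y(D)|_V$, and since $\sO_Y(D)$ is reflexive this identification extends canonically to all of $Y$ after applying the $S_2$-hull. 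Global $F$-regularity of $X$ now supplies some $e \ge 1$ and a splitting $\phi \colon F^e_*\sO_X(D_X) \to \sO_X$ of $\varphi^{(e)}_{D_X}$; pushing $\phi$ forward and extending across the codimension-two locus using reflexivity of $\sO_Y$ yields the desired splitting of $\varphi^{(e)}_D \colon \sO_Y \to F^e_*\sO_Y(D)$.

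The main obstacle will be the fiber space case for $F$-regularity: verifying the identification $f_*\sO_X(D_X)^{**} = \sO_Y(D)$ globally and confirming that the pushforward of $\varphi^{(e)}_{D_X}$, once extended, coincides with $\varphi^{(e)}_D$. These checks require careful bookkeeping with the $S_2$-property of pushforward sheaves together with the naturality of the inclusions $\sO_Y \hookrightarrow \sO_Y(D)$ and $F^e_*\sO_Y \hookrightarrow F^e_*\sO_Y(D)$ under Frobenius.
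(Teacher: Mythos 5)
The paper gives no proof of this lemma, simply citing \cite[Lemma 2.14]{GOST}, and your argument is correct and reproduces the standard approach of that reference: push forward the (divisorially twisted) Frobenius splittings using $f_*\sO_X=\sO_Y$ and the commutation $f_*F^e_*=F^e_*f_*$, handling the small birational and non-Cartier loci by restricting to a common open subset with complement of codimension at least two and extending via reflexivity. No essential gap.
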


\begin{lem}\label{crepant}
Let $X$ be a normal quasi-projective variety over a perfect field of characteristic $p>0$ and $\Delta$ be an effective $\Q$-Weil divisor on $X$ such that $K_X+\Delta$ is $\Q$-Cartier. 
Let $f:Y \to X$ be a proper birational morphism from a normal variety $Y$, 
and suppose that $\Delta_Y : = f^*(K_X+\Delta)- K_Y$ is an effective $\Q$-Weil divisor on $Y$. 
Then the following hold.
\begin{enumerate}
\item[$(1)$] $(Y, \Delta_Y)$ is globally $F$-regular if and only if so is $(X,\Delta)$.
\item[$(2)$] $(Y, \Delta_Y)$ is globally sharply $F$-split if and only if so is $(X,\Delta)$.
\end{enumerate}
\end{lem}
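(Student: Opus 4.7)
The plan is to reformulate each splitting condition in terms of the existence of a suitable global section of a reflexive sheaf via Grothendieck--Serre duality for the Frobenius morphism, and then use the crepant identity $K_Y + \Delta_Y = f^*(K_X+\Delta)$ to match these sections on $X$ and on $Y$. Throughout I set $B_X := \lceil (p^e-1)\Delta \rceil$ and $B_Y := \lceil (p^e-1)\Delta_Y \rceil$.

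The central sheaf-theoretic input is the elementary computation
\[
f_*\sO_Y\bigl(-\lceil f^*D \rceil\bigr) \;=\; \sO_X\bigl(-\lceil D \rceil\bigr)
\]
for any $\Q$-Cartier $\Q$-divisor $D$ on $X$: a rational function $g \in K(Y) = K(X)$ satisfies $\Div_X(g) \ge D$ if and only if $\Div_Y(g) \ge f^*D$, and since $\Div_Y(g)$ is integer valued the latter is equivalent to $\Div_Y(g) \ge \lceil f^*D \rceil$. Observing that $(1-p^e)K_X - B_X = -\lceil (p^e-1)(K_X+\Delta)\rceil$, and, by crepancy, $(1-p^e)K_Y - B_Y = -\lceil f^*((p^e-1)(K_X+\Delta))\rceil$, the computation applied with $D = (p^e-1)(K_X+\Delta)$ together with $f \circ F^e = F^e \circ f$ yields the identification
\[
f_*F^e_*\sO_Y\bigl((1-p^e)K_Y - B_Y\bigr) \;=\; F^e_*\sO_X\bigl((1-p^e)K_X - B_X\bigr).
\]

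Grothendieck--Serre duality for the finite Frobenius $F^e:X \to X$ identifies $\mathcal{H}\mathrm{om}_{\sO_X}(F^e_*\sO_X(B_X), \sO_X)$ with $F^e_*\sO_X((1-p^e)K_X - B_X)$ as reflexive $\sO_X$-modules, and under this identification a splitting of $\varphi^{(e)}_{B_X}$ (i.e. a $\psi$ with $\psi(1) = 1$) corresponds to a global section subject to a condition depending only on the generic fiber; the analogous statement holds on $Y$. Taking $H^0$ of the displayed identity, and noting that $f$ is an isomorphism over the generic points, this generic-fiber condition is preserved, yielding for each $e \ge 1$ a bijection between splittings of $\varphi^{(e)}_{B_X}$ and splittings of $\varphi^{(e)}_{B_Y}$. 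This proves part~(2).

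For part~(1) the same mechanism applies after incorporating an auxiliary effective Weil divisor. In the direction $(X,\Delta) \Rightarrow (Y,\Delta_Y)$, given any effective Weil $D_Y'$ on $Y$, choose an effective Cartier $D_X'$ on $X$ with $f^*D_X' \ge D_Y'$ (take $D_X' := f_*D_Y' + nH$ for a very ample $H$ on $X$ and $n \gg 0$). Running the argument above with $D$ replaced by $(p^e-1)(K_X+\Delta) + D_X'$ transfers a splitting on $X$ for $B_X + D_X'$ to one on $Y$ for $B_Y + f^*D_X'$, which restricts via the inclusion $F^e_*\sO_Y(B_Y + D_Y') \hookrightarrow F^e_*\sO_Y(B_Y + f^*D_X')$ to the required splitting for $D_Y'$. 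The reverse direction is symmetric and slightly easier: enlarge a given Weil $D_X'$ on $X$ to a Cartier divisor, pull back to $Y$, invoke the hypothesis on $Y$, and descend via the same identification. The main obstacle is to verify the Grothendieck--Serre duality together with the generic-fiber criterion for splittings on normal (possibly singular) varieties, and to confirm that both are compatible with the pushforward identification of global sections; these points are by now standard tools in the theory of $F$-singularities, but they require careful bookkeeping of reflexive hulls throughout.
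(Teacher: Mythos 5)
Your argument is correct and is essentially the paper's: the authors' entire proof is a citation of \cite[Proposition 2.11]{HX}, whose argument is precisely this Grothendieck duality/pushforward correspondence between $p^{-e}$-linear maps on $X$ and on $Y$ induced by the crepant identity $K_Y+\Delta_Y=f^*(K_X+\Delta)$ and the formula $f_*\sO_Y(-\lceil f^*D\rceil)=\sO_X(-\lceil D\rceil)$. The only slip is the parenthetical recipe $D_X':=f_*D_Y'+nH$, which need not be Cartier (or even $\Q$-Cartier) since $f_*D_Y'$ is merely a Weil divisor; instead take $D_X'$ to be a sufficiently large multiple of a very ample effective divisor whose support contains the proper closed subset $f(\Supp D_Y')\subseteq X$, so that $f^*D_X'\ge D_Y'$. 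This is cosmetic and does not affect the rest of the proof.
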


\begin{proof}
The assertion follows from essentially the same argument as the proof of \cite[Proposition 2.11]{HX}.
\end{proof}

\subsection{Singularities}
Here, we collect the definitions of singularities treated in this paper. 
Throughout this subsection, let $X$ be a normal variety over a perfect field $k$ and $\Delta$ be an effective $\Q$-Weil divisor on $X$. 
\begin{defn}
Suppose that $K_X+\Delta$ is $\Q$-Cartier. 
Given a projective birational morphism $\pi:Y \to X$ from a normal variety $Y$, we write 
\[
K_Y+\pi^{-1}_*\Delta=\pi^*(K_X+\Delta)+\sum_i a_i E_i, 
\]
where $\pi^{-1}_*\Delta$ is the strict transform of $\Delta$ by $\pi$, the $a_i$ are rational numbers and the $E_i$ are $\pi$-exceptional prime divisors on $Y$. 
We say that $(X, \Delta)$ is \textit{terminal} (resp.~ \textit{klt}) if all the $a_i >0$ (resp.~all the $a_i >-1$) for every projective birational morphism $\pi:Y \to X$ from a normal variety $Y$. 
\end{defn}

\begin{defn}
Suppose that $k$ is of characteristic $p>0$. 
The pair $(X, \Delta)$ is said to be \textit{strongly $F$-regular} if there exists an open covering $\{ U_i\}_{i \in I}$ of $X$ such that $(U_i, \Delta|_{U_i})$ is globally $F$-regular for every $i \in I$.
We simply say that $X$ is \textit{strongly $F$-regular} if so is the pair $(X,0)$.
\end{defn}

\begin{defn}[\cite{LT}]
We say that $X$ has only \textit{pseudo-rational} singularities if $X$ is Cohen-Macaulay and if the natural map $f_*\omega_Y \to \omega_X$ is an isomorphism for every projective birational morphism $f:Y \to X$ from a normal variety $Y$. 
\end{defn}

\begin{rem}\label{hierarchy}
Suppose that $k$ is of characteristic $p>0$ and $K_X+\Delta$ is $\Q$-Cartier. 
\begin{enumerate}
\item[(1)] (\cite[Theorem 2.29]{Ko}) If $(X, \Delta)$ is terminal, then $X$ is regular in codimension $2$.
\item[(2)] (\cite{HW}) If $(X, \Delta)$ is strongly $F$-regular, then it is klt.
\item[(3)] (\cite{Sm1}) If $X$ is strongly $F$-regular, then it is pseudo-rational.
\end{enumerate}
\end{rem}

\begin{rem}\label{GFR are CM}
Globally $F$-regular varieties are by definition strongly $F$-regular and, in particular, Cohen-Macaulay by Remark \ref{hierarchy} (3). 
\end{rem}

\begin{lem}\label{crepant SFR}
Suppose that $X$ is a normal quasi-projective variety over a perfect field of characteristic $p>0$ and $\Delta$ is an effective $\Q$-Weil divisor on $X$ such that $K_X+\Delta$ is $\Q$-Cartier.
Let $f:Y \to X$ be a proper birational morphism from a normal variety $Y$, and assume that $\Delta_Y : = f^*(K_X+\Delta)- K_Y$ is an effective $\Q$-Weil divisor on $Y$ and $(X, \Delta)$ is strongly $F$-regular.
Then $(Y, \Delta_Y)$ is also strongly $F$-regular.
\end{lem}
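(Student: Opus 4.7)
The plan is to reduce to the global version, Lemma \ref{crepant}(1), by localizing on $X$. Recall that strong $F$-regularity of $(Y, \Delta_Y)$ is defined as the existence of an open cover $\{V_j\}$ of $Y$ with each $(V_j, \Delta_Y|_{V_j})$ globally $F$-regular. Hence it suffices to show that every point $y \in Y$ admits an open neighborhood $V \subseteq Y$ such that $(V, \Delta_Y|_V)$ is globally $F$-regular.

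Fix $y \in Y$ and let $x = f(y) \in X$. Since $(X, \Delta)$ is strongly $F$-regular, there exists an open neighborhood $U \subseteq X$ of $x$ with $(U, \Delta|_U)$ globally $F$-regular; being open in a normal quasi-projective variety, $U$ is itself normal and quasi-projective. Set $V := f^{-1}(U)$, an open neighborhood of $y$ in $Y$. Then $f|_V : V \to U$ is a proper birational morphism from the normal variety $V$, and because the formation of the relative canonical differs by pulling back $\Q$-Cartier divisors commutes with open restriction, we have $\Delta_Y|_V = (f|_V)^*(K_U + \Delta|_U) - K_V$, which remains an effective $\Q$-Weil divisor on $V$.

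The hypotheses of Lemma \ref{crepant}(1) are now satisfied for $f|_V : V \to U$ and the pair $(U, \Delta|_U)$, which is globally $F$-regular. The lemma then yields that $(V, \Delta_Y|_V)$ is globally $F$-regular. As $y$ was arbitrary, such neighborhoods $V$ form an open cover of $Y$ by globally $F$-regular pairs, so $(Y, \Delta_Y)$ is strongly $F$-regular by definition.

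I do not anticipate any real obstacle here: essentially all the content has been packaged into Lemma \ref{crepant}(1), and what remains is a routine localization argument. The only points to verify are that the chosen $U$ satisfies the hypotheses of Lemma \ref{crepant} (normality and quasi-projectivity, both inherited from $X$) and that the divisor $\Delta_Y|_V$ is again effective, which is immediate from the definition of $\Delta_Y$.
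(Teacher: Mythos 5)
Your proposal is correct and is essentially the paper's own argument: both pull back an open cover of $X$ by globally $F$-regular pairs along $f$ and apply Lemma \ref{crepant}(1) to each piece. The only cosmetic difference is that you argue pointwise while the paper takes the whole cover at once.
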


\begin{proof}
Take an open covering $\{ U_i \}_{i \in I}$ of $X$ such that $(U_i, \Delta|_{U_i})$ is globally $F$-regular for every $i \in I$.
It then follows from Lemma \ref{crepant} that the pair $(V_i:=f^{-1}(U_i), \Delta_Y|_{V_i})$ is globally $F$-regular for every $i$, which implies that $(Y, \Delta_Y)$ is strongly $F$-regular. 
\end{proof}

\subsection{Cartier isomorphism}
In this subsection, we briefly review the Cartier isomorphism. 
Let $X$ be an $n$-dimensional smooth (not necessarily projective) variety over a perfect field $k$ of characteristic $p>0$. Let 
\[
0 \to F_*\sO_X \xrightarrow{F_*d^0} F_*\Omega_X \xrightarrow{F_*d^1} F_*\Omega_X^2 \xrightarrow{F_*d^2} \cdots \xrightarrow{F_*d^{n-1}} F_*\omega_X \to 0
\]
be the push-forward of the de Rham complex of $X$ by the absolute Frobenius morphism $F:X \to X$. 
For each $i=1, \dots, n$, we define the $\sO_X$-submodules $B_X^i, Z_X^i$ of $F_* \Omega_X^i$ by 
\[
B_X^i:=\mathrm{Im}\; F_*d^{i-1}, \quad Z_X^i:=\mathrm{Ker}\; F_*d^i. 
\]
$B_X^i$ and $Z_X^i$ are locally free sheaves and we have the following short exact sequences. 
\begin{align*}
& 0 \to \sO_X \to F_*\sO_X \to B_X^1 \to 0,\\
& 0 \to Z_X^i \to F_*\Omega_X^i \to B_X^{i+1} \to 0,\\
& 0 \to B_X^i \to Z_X^i \xrightarrow{C} \Omega_X^i \to 0,
\end{align*}
where $C$ is the Cartier operator (see \cite{Il} for the details). 

\begin{defn}
Let $X$ be a normal variety over a perfect field $k$ of characteristic $p>0$. Let $U$ denote the smooth locus of $X$ and $\iota:U \hookrightarrow X$ denote the natural inclusion. For each $i=1, \dots, n$, the sheaf $\Omega_X^{[i]}$ of reflexive differential $i$-forms on $X$ is defined by $\Omega_X^{[i]}:=\iota_*\Omega_U^i$. Similarly, $B_X^{[i]}$ and $Z_X^{[i]}$ are defined by $B_X^{[i]}:=\iota_*B_U^{i}$ and  $Z_X^{[i]}:=\iota_*Z_U^i$, respectively.  
\end{defn}

\subsection{Symmetric, exterior, and divided powers}
Here, we present basic properties of divided powers which we will need in \S \ref{thickening section}. 

Suppose that $X$ is a scheme, $n$ is a nonnegative integer and $\mathcal{F}$ is a coherent sheaf on $X$.
Let $S^n(\mathcal{F})$ (resp. $\bigwedge^n(\mathcal{F})$) denote the $n$-th symmetric power (resp. the $n$-th exterior power) of $\mathcal{F}$.
Let $f: \mathcal{F} \to \mathcal{G}$ is a morphism of coherent sheaves on $X$, and then 
$f^{\otimes n} : \mathcal{F}^{\otimes n } \to \mathcal{G}^{\otimes n}$ induces 
the morphisms $S^n(f): S^n(\mathcal{F}) \to S^{n}(\mathcal{G})$ and $\bigwedge^n(f): \bigwedge^n(\mathcal{F}) \to \bigwedge^{n}(\mathcal{G})$. 

The natural morphism $\mathcal{F}^{\otimes n} \otimes \mathcal{F} \to \mathcal{F}^{\otimes n+1}$ induces the morphisms
\begin{eqnarray*}
m_{s}: S^n(\mathcal{F}) \otimes  \mathcal{F}&\to& S^{n+1}(\mathcal{F}) \textup{, and }\\
m_{e}:\bigwedge^n(\mathcal{F}) \otimes  \mathcal{F} &\to& \bigwedge^{n+1}(\mathcal{F}).
\end{eqnarray*}
Let $\mathcal{E}$ be a locally free sheaf of finite rank on $X$.
Then by patching together the comultiplication maps (\cite[Subsection 1.1]{Wey}), we define the morphism
\begin{eqnarray*}
\Delta_{e}  : \bigwedge^{n+1}(\mathcal{E}) &\to& \bigwedge^n(\mathcal{E}) \otimes \mathcal{E}.
\end{eqnarray*}

\begin{defn}
Let $f: \mathcal{E} \to \mathcal{F}$ be a morphism of locally free sheaves of finite rank on a scheme $X$.
For any integers $i,j \ge 0$, we define the morphism 
$\alpha_{f}^{i,j}: S^i(\mathcal{F}) \otimes \bigwedge^j(\mathcal{E}) \to S^{i+1}(\mathcal{F}) \otimes \bigwedge^{j-1}(\mathcal{E})$
 as the composition
\begin{equation*}
\xymatrix{
S^i(\mathcal{F}) \otimes \bigwedge^j(\mathcal{E})  \ar^-{\mathrm{id} \otimes \Delta_{e}}[rr] \ar_-{\alpha^{i,j}_f}[rrrrd] && S^i(\mathcal{F}) \otimes \mathcal{E} \otimes \bigwedge^{j-1}(\mathcal{E}) \ar^-{\mathrm{id} \otimes f \otimes \mathrm{id}}[rr] && S^i(\mathcal{F}) \otimes \mathcal{F} \otimes \bigwedge^{j-1}(\mathcal{E}) \ar^-{m_s \otimes \mathrm{id}}[d] \\
&&&& S^{i+1}(\mathcal{F}) \otimes \bigwedge^{j-1}(\mathcal{E}).}
\end{equation*}
\end{defn}

\begin{lem}\label{Koszul}
Let $X$ be a scheme and
\[
0 \to \mathcal{E} \xrightarrow{\ f\ } \mathcal{F} \xrightarrow{\ g\ } \mathcal{G} \to 0
\]
be an exact sequence of locally free sheaves of finite rank.
Let $n \ge 0$ be an integer and $r : = \min \{n, \mathrm{rank}(\mathcal{E}) \}$.
Then the sequence 
\[
0 \xleftarrow{\ \ } S^n(\mathcal{G}) \xleftarrow{S^n(g)} S^n(\mathcal{F}) \xleftarrow{\alpha^{m-1, 1}_f}  S^{n-1}(\mathcal{F}) \otimes \mathcal{E} \xleftarrow{\alpha^{m-2, 2}_f}  \cdots \xleftarrow{\alpha^{m-r,r}_f} S^{n-r}(\mathcal{F}) \otimes \bigwedge^r(\mathcal{E}) \xleftarrow{\ \ } 0
\]
is an exact sequence.
\end{lem}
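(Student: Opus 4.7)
The plan is to reduce the statement to a local model where the short exact sequence splits, and then identify the complex with a classical Koszul resolution of $\mathcal{O}_X$ by the symmetric algebra of $\mathcal{E}$.

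First, that the displayed sequence is a complex is a direct verification from the definition of $\alpha_{f}^{i,j}$: composing $\alpha_{f}^{n-k+1,k-1}$ with $\alpha_{f}^{n-k,k}$ produces two applications of the comultiplication $\Delta_e$ composed with two symmetric multiplications by $f$, and the antisymmetry of $\Delta_e$ makes the result vanish. Next, since $\mathcal{E}$, $\mathcal{F}$, $\mathcal{G}$ are locally free of finite rank and all the sheaves appearing in the sequence are locally free, exactness can be checked after restricting to a small enough affine open $U \subseteq X$. We may therefore choose $U$ so that the sequence $0 \to \mathcal{E}|_U \to \mathcal{F}|_U \to \mathcal{G}|_U \to 0$ admits a splitting, and after replacing $X$ by $U$ assume $\mathcal{F} = \mathcal{E} \oplus \mathcal{G}$, with $f$ and $g$ the canonical inclusion and projection.

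Under this splitting there is a natural bigrading
\[
S^n(\mathcal{F}) \;=\; \bigoplus_{a+b=n} S^a(\mathcal{E}) \otimes S^b(\mathcal{G}), \qquad S^{n-k}(\mathcal{F}) \otimes \bigwedge^{k}(\mathcal{E}) \;=\; \bigoplus_{a+b=n-k} S^a(\mathcal{E}) \otimes S^b(\mathcal{G}) \otimes \bigwedge^{k}(\mathcal{E}),
\]
and the map $S^n(g)$ is simply the projection onto the summand with $a = 0$. A direct computation using the definition of $\alpha_f^{i,j}$ shows that, with respect to this decomposition, $\alpha_f^{n-k,k}$ is the identity on the $S^b(\mathcal{G})$ factor and acts on $S^a(\mathcal{E}) \otimes \bigwedge^{k}(\mathcal{E})$ as the standard Koszul differential, sending $s \otimes (e_1 \wedge \cdots \wedge e_k)$ to $\sum_{i}(-1)^{i-1}(s \cdot e_i) \otimes (e_1 \wedge \cdots \widehat{e_i} \cdots \wedge e_k)$ into $S^{a+1}(\mathcal{E}) \otimes \bigwedge^{k-1}(\mathcal{E})$. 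This bookkeeping with $\Delta_e$ is the step I expect to be the most technical, but it is straightforward once the splitting is fixed.

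Granting this identification, the whole complex decomposes as the direct sum over $b = 0, \ldots, n$ of $S^b(\mathcal{G})$ tensored with the degree-$(n-b)$ strand of the classical Koszul complex
\[
\cdots \;\to\; \bigwedge^{k}(\mathcal{E}) \otimes S^{\bullet}(\mathcal{E}) \;\to\; \bigwedge^{k-1}(\mathcal{E}) \otimes S^{\bullet}(\mathcal{E}) \;\to\; \cdots \;\to\; S^{\bullet}(\mathcal{E}) \;\to\; \mathcal{O}_X \;\to\; 0,
\]
together with the augmentation picking out the $b=n$ summand. For $b = n$ the resulting complex is the identity $S^n(\mathcal{G}) \to S^n(\mathcal{G})$, which is exact; for $b < n$ the strand in degree $n - b \ge 1$ of the Koszul resolution is known to be exact (e.g.\ \cite[Subsection 1.1]{Wey}), and its length is exactly $\min(n-b, \mathrm{rank}(\mathcal{E})) \le r$. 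Summing over $b$ gives the required exactness.
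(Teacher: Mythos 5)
Your proof is correct and takes essentially the same route as the paper's: both reduce to a split affine situation and recognize the sequence as (a graded piece of) a Koszul complex. The paper identifies the whole sequence with the degree-$n$ part of the Koszul complex of the regular sequence $x_1,\dots,x_u$ in the symmetric algebra $A[x_1,\dots,x_v]\cong S^\bullet(F)$, while you decompose further by the $\mathcal{G}$-degree into strands of the Koszul complex over $S^\bullet(\mathcal{E})$ --- the same computation, organized slightly more finely.
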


\begin{proof}
By shrinking $X$, we may assume that $X=\Spec A$ is affine and $\mathcal{E}, \mathcal{F}$ and $\mathcal{G}$ are free.
Let $E$, $F$ and $G$ be the free $A$-modules corresponding to $\mathcal{E}, \mathcal{F}$ and $\mathcal{G}$, respectively.
Take a basis $e_1, \dots, e_u$ of $E$.
Since the short exact sequence 
\[
0 \to E \to F \to G \to 0
\]
splits, there exist $e_{u+1}, \dots, e_v \in F$ such that $e_1, \dots, e_u, e_{u+1}, \dots, e_v$ is a basis of $F$.
Let $R : = \oplus_{m\ge 0} S^m(F) \cong A[x_1, \dots, x_v]$ be the symmetric algebra of $F$.
Since $x_1, \dots, x_u \in R$ is a regular sequence, the Koszul complex
\[
0 \xleftarrow{} R/(x_1, \dots, x_u) \xleftarrow{} R \xleftarrow{} R \otimes_R E \xleftarrow{}  \cdots \xleftarrow{} R \otimes_R \bigwedge^u(E) \xleftarrow{} 0
\]
is exact.
Then the sequence in the assertion is exact, because it is isomorphic to the degree $n$ part of the above Koszul complex.
\end{proof}

Next, we consider the divided powers of locally free sheaves.
Let $X$ be a scheme, $n \ge 0$ be an integer and $\mathcal{E}$ be a locally free sheaf on $X$ of finite rank.
We define the $n$-th \textit{divided power} $D_n(\mathcal{E})$ by 
\[
D_n(\mathcal{E}):= (S^n(\mathcal{E^*}))^{*},
\]
where $(-)^{*} : = \mathcal{H}\mathrm{om}(- , \sO_X)$ is the $\sO_X$-dual.
We also define 
\[
\Delta_d := (m_s)^*: D_{n+1}(\mathcal{E}) \to D_n(\mathcal{E}) \otimes \mathcal{E}
\]
as the dual of $m_s: S^n(\mathcal{E}^*) \otimes \mathcal{E}^* \to S^{n+1}(\mathcal{E}^*)$. 
For a morphism $f : \mathcal{E} \to \mathcal{F}$ between locally free sheaves of finite rank, we write $D_n(f) : = (S^n(f^*))^* : D_n(\mathcal{E}) \to D_n(\mathcal{F})$.

\begin{defn}
Let $X$ be a scheme and $g : \mathcal{F} \to \mathcal{G}$ be a morphism of coherent sheaves on $X$. 
Suppose that $\mathcal{F}$ is a locally free sheaf of finite rank. 
For any integers $i,j \ge 0$, we define the morphism 
$\beta_{g}^{i,j}: D_i(\mathcal{F}) \otimes \bigwedge^j(\mathcal{G}) \to D_{i-1}(\mathcal{F}) \otimes \bigwedge^{j+1}(\mathcal{G})$
 as the composition
\[
\xymatrix{
D_i(\mathcal{F}) \otimes \bigwedge^j(\mathcal{G}) \ar^-{\Delta_{d} \otimes \mathrm{id}}[rr] \ar_-{\beta^{i,j}_g}[rrrrd] && D_{i-1}(\mathcal{F}) \otimes \mathcal{F} \otimes \bigwedge^{j}(\mathcal{G}) \ar^-{\mathrm{id} \otimes g \otimes \mathrm{id}}[rr] && D_{i-1}(\mathcal{F}) \otimes \mathcal{G} \otimes \bigwedge^{j}(\mathcal{G}) \ar^-{\mathrm{id} \otimes m_e}[d] \\
&&&& D_{i-1}(\mathcal{F}) \otimes \bigwedge^{j+1}(\mathcal{G}).
}
\]
\end{defn}

\begin{lem}[\textup{\cite[Proposition 1.1.2]{Wey}}]\label{wedge dual}
Let $X$ be a scheme and $\mathcal{E}$ be a locally free sheaf of rank $r$ on $X$.
Then for every $1 \le n \le r$, there is an isomorphism 
\[
\phi_n:  \bigwedge^n(\mathcal{E}^*)  \xrightarrow{\ \sim\ }(\bigwedge^n(\mathcal{E}))^{*}. \] 
Moreover, we have the following commutative diagram 
\[
\xymatrix{
\bigwedge^{n}(\mathcal{E}^*) \ar^-{\Delta_e}[r]  \ar_{\phi_n}[d] & \bigwedge^{n-1}(\mathcal{E}^*) \otimes \mathcal{E}^* . \ar^{\phi_{n-1} \otimes \mathrm{id}}[rd] & \\
(\bigwedge^n(\mathcal{E}))^* \ar^-{(m_e)^*}[r] & (\bigwedge^{n-1}(\mathcal{E}) \otimes \mathcal{E})^*  \ar^-{ \sim }[r] & (\bigwedge^{n-1}(\mathcal{E}))^* \otimes \mathcal{E}^*
}
\]

\end{lem}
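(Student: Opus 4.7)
The plan is to construct $\phi_n$ via the standard determinant pairing and then verify the commutative square by a direct calculation. Since $\mathcal{E}$ is locally free of finite rank, the question is local on $X$, so one may assume $X=\Spec A$ is affine and $\mathcal{E}$ corresponds to a free $A$-module with basis $e_1,\dots,e_r$ and dual basis $e_1^*,\dots,e_r^*$.

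First I would define $\phi_n$ as the adjoint of the perfect pairing
\[
\bigwedge^n(\mathcal{E}^*) \otimes \bigwedge^n(\mathcal{E}) \to \sO_X, \qquad (\alpha_1 \wedge \cdots \wedge \alpha_n) \otimes (x_1 \wedge \cdots \wedge x_n) \mapsto \det(\alpha_i(x_j)).
\]
A direct computation on the basis vectors $e_{i_1}^* \wedge \cdots \wedge e_{i_n}^*$ with $i_1<\cdots<i_n$ shows that $\phi_n$ carries these to the dual basis of $\bigwedge^n(\mathcal{E})$, and hence is an isomorphism of free $A$-modules of rank $\binom{r}{n}$.

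For the commutative square I would evaluate both compositions on a decomposable element $\alpha=\alpha_1 \wedge \cdots \wedge \alpha_n$, tested against $\omega \otimes x \in \bigwedge^{n-1}(\mathcal{E}) \otimes \mathcal{E}$. Using the Hopf-algebra comultiplication
\[
\Delta_e(\alpha_1 \wedge \cdots \wedge \alpha_n)=\sum_{i=1}^n (-1)^{n-i}(\alpha_1 \wedge \cdots \wedge \widehat{\alpha_i} \wedge \cdots \wedge \alpha_n) \otimes \alpha_i,
\]
the upper-right composition applied to $\alpha$, evaluated at $\omega \otimes x$, gives
\[
\sum_{i=1}^n (-1)^{n-i}\,\phi_{n-1}(\alpha_1 \wedge \cdots \wedge \widehat{\alpha_i} \wedge \cdots \wedge \alpha_n)(\omega)\cdot \alpha_i(x),
\]
while the lower-left composition gives $\phi_n(\alpha)(\omega \wedge x)$, which is the determinant of the $n \times n$ matrix whose last column is $(\alpha_i(x))_i$ and whose first $n-1$ columns record the pairings $\alpha_i$ against the factors of $\omega$. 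Expanding this determinant by cofactors along the last column recovers the previous sum. The main obstacle is purely bookkeeping: one must pin down the sign conventions in $\Delta_e$, in the canonical identification $(\bigwedge^{n-1}(\mathcal{E})\otimes\mathcal{E})^* \cong (\bigwedge^{n-1}(\mathcal{E}))^*\otimes\mathcal{E}^*$, and in the determinant pairing, so that the $(-1)^{n-i}$ coming from $\Delta_e$ exactly matches the cofactor sign $(-1)^{i+n}$ from Laplace expansion and no additional sign appears.
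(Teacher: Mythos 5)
Your proposal is correct: the paper gives no proof of this lemma at all, simply citing Weyman, Proposition~1.1.2, and your argument (the determinant pairing defining $\phi_n$, checked to be an isomorphism on a local basis, followed by Laplace expansion along the last column matching the signs $(-1)^{n-i}$ in the comultiplication) is precisely the standard proof behind that citation. The only presentational remark is that the determinant pairing is basis-free, so $\phi_n$ is defined globally from the start and the reduction to the affine free case is needed only to verify bijectivity and the commutativity of the square.
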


\begin{prop}\label{divided cpx}
Let $X$ be a scheme, $\mathcal{E}, \mathcal{F}$ be locally free sheaves of finite rank and $\mathcal{G}$ be a coherent sheaf on $X$.
Suppose that there is an exact sequence
\[
0 \to \mathcal{E} \xrightarrow{\ f\ } \mathcal{F} \xrightarrow{\ g\ } \mathcal{G} \to 0.
\]
Then for every integer $n \ge 1$, the sequence
\begin{equation*}
\begin{split}
0 \to D_n(\mathcal{E}) \xrightarrow{D_n(f)} D_n(\mathcal{F}) \xrightarrow{\ \beta^{n,0}_g\ } D_{n-1}(\mathcal{F}) \otimes \mathcal{G}  \xrightarrow{\ \beta^{n-1,1}_g\ } & D_{n-2}(\mathcal{F}) \otimes \bigwedge^2 (\mathcal{G}) \xrightarrow{\ \beta^{n-2,2}_g\ } \cdots\\
 \cdots  \xrightarrow{\beta^{n-r+1,r-1}_g} & D_{n-r}(\mathcal{F}) \otimes \bigwedge^r (\mathcal{G}) \xrightarrow{\ \beta^{n-r,r}_g\ } \cdots
\end{split}
\end{equation*}
is a complex.
Moreover, this complex is exact if $\mathcal{G}$ is locally free.
\end{prop}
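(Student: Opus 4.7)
The plan is to deduce both assertions from Lemma~\ref{Koszul} by duality. For the exactness statement (when $\mathcal{G}$ is locally free), I apply Lemma~\ref{Koszul} to the dualized short exact sequence
\[
0 \to \mathcal{G}^* \xrightarrow{g^*} \mathcal{F}^* \xrightarrow{f^*} \mathcal{E}^* \to 0
\]
(with $\mathcal{G}^*$ in the role of $\mathcal{E}$). The resulting exact Koszul-type sequence has terms $S^{n-k}(\mathcal{F}^*) \otimes \bigwedge^k(\mathcal{G}^*)$. Since every term is locally free of finite rank, taking the $\mathcal{O}_X$-dual preserves exactness. One identifies $(S^m(\mathcal{F}^*))^* = D_m(\mathcal{F})$ by the very definition of divided powers, and $(\bigwedge^k(\mathcal{G}^*))^* \cong \bigwedge^k(\mathcal{G})$ via Lemma~\ref{wedge dual} together with reflexivity of locally free sheaves. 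It then remains to check that the dualized maps coincide with the stated $\beta$-maps: unwinding the definition, the dual of $\alpha^{i,j}_{g^*}$ is built from $(m_s)^* = \Delta_d$, $(g^*)^* = g$ (using reflexivity), and $(\mathrm{id} \otimes \Delta_e)^*$, the last of which is identified with $\mathrm{id} \otimes m_e$ by the commutative diagram in Lemma~\ref{wedge dual}. Assembling these three steps exhibits $(\alpha^{i,j}_{g^*})^*$ as $\beta^{i+1,j-1}_g$.

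For the complex property, which must hold for arbitrary coherent $\mathcal{G}$ and hence requires a separate argument, observe that $\beta^{n-k-1,k+1}_g \circ \beta^{n-k,k}_g$ depends only on $g$ and factors as the iterated comultiplication $\Delta_d^{(2)} := (\Delta_d \otimes \mathrm{id}) \circ \Delta_d : D_{n-k}(\mathcal{F}) \to D_{n-k-2}(\mathcal{F}) \otimes \mathcal{F} \otimes \mathcal{F}$, followed by $g \otimes g$ on the two $\mathcal{F}$ factors, followed by iterated exterior multiplication $\mathcal{G} \otimes \mathcal{G} \otimes \bigwedge^k(\mathcal{G}) \to \bigwedge^{k+2}(\mathcal{G})$. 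Dually to the factorisation
\[
S^{n-k-2}(\mathcal{F}^*) \otimes \mathcal{F}^* \otimes \mathcal{F}^* \to S^{n-k-2}(\mathcal{F}^*) \otimes S^2(\mathcal{F}^*) \to S^{n-k}(\mathcal{F}^*)
\]
coming from commutativity of the symmetric algebra, the map $\Delta_d^{(2)}$ factors through $D_{n-k-2}(\mathcal{F}) \otimes D_2(\mathcal{F})$. Hence it suffices to show that the composition
\[
D_2(\mathcal{F}) \xrightarrow{\Delta_d} \mathcal{F} \otimes \mathcal{F} \xrightarrow{g \otimes g} \mathcal{G} \otimes \mathcal{G} \xrightarrow{m_e} \bigwedge\nolimits^2(\mathcal{G})
\]
vanishes. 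By naturality of $m_e$ this composition equals $\bigwedge^2(g) \circ m_e \circ \Delta_d$, and the inner piece $m_e \circ \Delta_d : D_2(\mathcal{F}) \to \bigwedge^2(\mathcal{F})$ is the $\mathcal{O}_X$-dual of $m_s \circ \Delta_e : \bigwedge^2(\mathcal{F}^*) \to S^2(\mathcal{F}^*)$ (via Lemma~\ref{wedge dual}), which is zero since multiplication in the symmetric algebra is commutative.

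The main technical obstacle is the bookkeeping in the first half that identifies the dualized $\alpha$-maps precisely with the specified $\beta$-maps; once this is carried out carefully, both assertions follow directly from Lemma~\ref{Koszul} and the symmetric/exterior algebra observations above.
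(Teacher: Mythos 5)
Your proof of the exactness statement follows the same route as the paper's: both apply Lemma~\ref{Koszul} to the dualized sequence $0\to\mathcal{G}^*\xrightarrow{g^*}\mathcal{F}^*\xrightarrow{f^*}\mathcal{E}^*\to 0$ and identify the $\sO_X$-dual of the resulting Koszul complex with the stated complex via Lemma~\ref{wedge dual}; your bookkeeping $(\alpha^{i,j}_{g^*})^*=\beta^{i+1,j-1}_g$ is exactly what is needed. For the relation $\beta^{i-1,j+1}_g\circ\beta^{i,j}_g=0$ you diverge slightly from the paper: both arguments reduce, via naturality of $m_e$, to the vanishing of $(\mathrm{id}\otimes m_e)\circ(\Delta_d\otimes\mathrm{id})\circ\Delta_d\colon D_i(\mathcal{F})\to D_{i-2}(\mathcal{F})\otimes\bigwedge^2(\mathcal{F})$, but the paper then checks this on a canonical local basis of the divided power module (citing Weyman), whereas you factor through $D_{i-2}(\mathcal{F})\otimes D_2(\mathcal{F})$ and dualize the identity $m_s\circ\Delta_e=0$ on $\bigwedge^2(\mathcal{F}^*)$. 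Your variant is basis-free and correct, and arguably cleaner.

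The one genuine omission is the first composite: for the sequence to be a complex you must also show $\beta^{n,0}_g\circ D_n(f)=0$, and your argument does not cover it. It concerns $f$ and $g$ jointly rather than $g$ alone, and it must hold for arbitrary coherent $\mathcal{G}$, so it can be absorbed neither into your $\beta\circ\beta$ computation nor into the duality argument for the locally free case. The paper handles it by the naturality square $\Delta_d\circ D_n(f)=(D_{n-1}(f)\otimes f)\circ\Delta_d$ (the dual of the naturality of $m_s$), after which
\[
\beta^{n,0}_g\circ D_n(f)=(\mathrm{id}\otimes g)\circ(D_{n-1}(f)\otimes f)\circ\Delta_d=(D_{n-1}(f)\otimes(g\circ f))\circ\Delta_d=0
\]
since $g\circ f=0$. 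This is easy to supply, but it is a required step and should be stated explicitly.
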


\begin{proof}
Since we have the commutative diagram
\[
\xymatrix{
D_n(\mathcal{E}) \ar^-{\Delta_d}[rr]  \ar_{D_n(f)}[d] & &D_{n-1}(\mathcal{E}) \otimes \mathcal{E}  \ar_{D_{n-1}(f) \otimes f}[d] \ar^{0}[drr]\\
D_n(\mathcal{F}) \ar^-{\Delta_d}[rr] \ar@/_12pt/[rrrr]_{\beta^{n,0}_g}  & & D_{n-1}(\mathcal{F}) \otimes \mathcal{F} \ar^-{\mathrm{id} \otimes g}[rr] && D_{n-1}(\mathcal{F}) \otimes \mathcal{G} ,
}
\]
the composite map $\beta^{n,0}_g \circ D_n(f)$ is the zero map.
We next show that the composition 
\[
\beta^{i-1,j+1}_g \circ \beta^{i,j}_g : D_i(\mathcal{F}) \otimes \bigwedge^j (\mathcal{G}) \to D_{i-2}(\mathcal{F}) \otimes \bigwedge^{j+2} (\mathcal{G})
\]
is the zero map for every integers $i \ge 2$ and $j \ge 0$.
Since the map $\beta \circ \beta$ factors through the composition 
\[
D_i(\mathcal{F}) \otimes \bigwedge^j (\mathcal{G}) \xrightarrow{(\Delta_d \otimes \mathrm{id}) \circ \Delta_d \otimes \mathrm{id}} D_{i-2}(\mathcal{F}) \otimes \mathcal{F} \otimes \mathcal{F} \otimes \bigwedge^j (\mathcal{G}) \xrightarrow{\mathrm{id} \otimes m_e \otimes \mathrm{id}} D_{i-2}(\mathcal{F}) \otimes \bigwedge^2(\mathcal{F}) \otimes \bigwedge^j (\mathcal{G}),\]
it is enough to show that 
\[
(\mathrm{id} \otimes m_e) \circ (\Delta_d \otimes \mathrm{id})  \circ \Delta_d: D_i(\mathcal{F}) \to D_{i-2}(\mathcal{F}) \otimes \bigwedge^2(\mathcal{F})\]
 is the zero map.
After shrinking $X$, we may assume that $X$ is affine and $\mathcal{F}$ is free.
Then we can verify the equation $(\mathrm{id} \otimes m_e) \circ (\Delta_d \otimes \mathrm{id})  \circ \Delta_d=0$ by looking at a canonical basis of $D_n(\mathcal{F})$ (see \cite[Proposition 1.1.7 (c)]{Wey}), which proves that the sequence in the assertion is a complex.

Finally, we assume that $\mathcal{G}$ is locally free.
Applying Lemma \ref{Koszul} to the exact sequence
\[
0 \to \mathcal{G}^* \xrightarrow{\ g* \ } \mathcal{F}^* \xrightarrow{\ f* \ } \mathcal{E}^* \to 0,
\]
we obtain the exact sequence 
\[
0 \xleftarrow{\ \ } S^n(\mathcal{E}^*) \xleftarrow{S^n(f^*)} S^n(\mathcal{F}^*) \xleftarrow{\ \alpha\ } S^{n-1}(\mathcal{F}^*) \otimes \mathcal{G}^* \xleftarrow{\ \alpha\ }  \cdots \xleftarrow{\ \alpha\ } S^{n-r}(\mathcal{F}^*) \otimes \bigwedge^r(\mathcal{G}^*) \xleftarrow{\ \ } 0
\]
of locally free sheaves, where $r:=\min\{n, \mathrm{rank}(\mathcal{G)}\}$. 
It follows from Lemma \ref{wedge dual} that the sequence in the assertion is isomorphic to the dual of the above exact sequence, which completes the proof.
\end{proof}

The following lemma is well-known to experts, but we provide its proof here for the convenience of the reader. 

\begin{lem}\label{div sym}
Let $X$ be a scheme over a field $k$ and $\mathcal{F}$ be a locally free sheaf of finite rank on $X$.
Assume that one of the following conditions is satisfied. 
\begin{enumerate}
\item[\textup{(a)}] $k$ is of characteristic zero,  
\item[\textup{(b)}] $k$ is of characteristic $p>m$, 
\item[\textup{(c)}] $\mathcal{F}$ is a direct sum of line bundles. 
\end{enumerate}
Then 
$D_m(\mathcal{F}) \cong S^m(\mathcal{F})$.
\end{lem}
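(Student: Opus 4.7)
The plan is to build a canonical symmetrization morphism $s : S^m(\mathcal{F}) \to D_m(\mathcal{F})$, verify that it is an isomorphism whenever $m!$ is invertible in $k$, and handle case (c) by a separate direct-sum reduction.

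First I would identify the target as a subsheaf of $\mathcal{F}^{\otimes m}$. Dualizing the canonical surjection $(\mathcal{F}^*)^{\otimes m} \twoheadrightarrow S^m(\mathcal{F}^*)$ yields an injection $D_m(\mathcal{F}) \hookrightarrow \mathcal{F}^{\otimes m}$ whose image is precisely the subsheaf of $S_m$-invariants for the permutation action, as one checks locally on a trivialization (alternatively, this is $\mathrm{Hom}$-dual to the tautological fact that $S^m(\mathcal{F}^*)$ is the sheaf of $S_m$-coinvariants of $(\mathcal{F}^*)^{\otimes m}$). Since $S^m(\mathcal{F})$ is itself the $S_m$-coinvariants of $\mathcal{F}^{\otimes m}$, the symmetrization $v \mapsto \sum_{\sigma \in S_m}\sigma \cdot v$ on $\mathcal{F}^{\otimes m}$ both factors through the coinvariants and lands in the invariants, giving a well-defined morphism $s$.

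For cases (a) and (b), the two composites
\[
D_m(\mathcal{F}) \hookrightarrow \mathcal{F}^{\otimes m} \twoheadrightarrow S^m(\mathcal{F}) \xrightarrow{s} D_m(\mathcal{F}) \quad\text{and}\quad S^m(\mathcal{F}) \xrightarrow{s} D_m(\mathcal{F}) \hookrightarrow \mathcal{F}^{\otimes m} \twoheadrightarrow S^m(\mathcal{F})
\]
are immediately seen to equal multiplication by $m!$: for an invariant $w$ one has $\sum_\sigma \sigma \cdot w = m!\, w$, and for a coinvariant class $[v]$ one has $\sum_\sigma [\sigma \cdot v] = m!\, [v]$. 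Hence, whenever $m!$ is invertible in $k$ (either because $k$ has characteristic zero or because $p > m$), $s$ is an isomorphism with inverse $\tfrac{1}{m!}$ times the composite $D_m(\mathcal{F}) \hookrightarrow \mathcal{F}^{\otimes m} \twoheadrightarrow S^m(\mathcal{F})$.

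For case (c), the morphism $s$ is useless, since on a trivializing open it is identically the zero map once $\mathrm{char}(k) \le m$; instead I would argue by direct-sum decomposition. The standard identity $S^m(\mathcal{E} \oplus \mathcal{E}') \cong \bigoplus_{i+j=m} S^i(\mathcal{E}) \otimes S^j(\mathcal{E}')$ and its divided-power analogue $D_m(\mathcal{E} \oplus \mathcal{E}') \cong \bigoplus_{i+j=m} D_i(\mathcal{E}) \otimes D_j(\mathcal{E}')$ (which follows from $D_m = (S^m((-)^*))^*$ together with compatibility of dualization with finite direct sums of locally free sheaves of finite rank) reduce the claim to line bundles, where $S^n(\mathcal{L}) = \mathcal{L}^{\otimes n} = (\mathcal{L}^{-n})^* = D_n(\mathcal{L})$ is trivial. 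Writing $\mathcal{F} = \bigoplus_{i=1}^r \mathcal{L}_i$, both $S^m(\mathcal{F})$ and $D_m(\mathcal{F})$ then decompose as $\bigoplus_{a_1+\cdots+a_r=m}\bigotimes_i \mathcal{L}_i^{\otimes a_i}$, yielding a (necessarily non-canonical) isomorphism. The main conceptual obstacle is precisely this point: case (c) cannot be handled by the same canonical map as (a) and (b), and the resulting isomorphism depends on the chosen splitting of $\mathcal{F}$ into line bundles.
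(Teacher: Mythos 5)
Your argument is correct and is essentially the paper's own: the paper handles (a) and (b) via the permanent pairing $S^m(\mathcal{F}^*)\otimes S^m(\mathcal{F})\to\sO_X$, $f_1\cdots f_m\otimes x_1\cdots x_m\mapsto\sum_{\sigma}f_{\sigma(1)}(x_1)\cdots f_{\sigma(m)}(x_m)$, which is the dual packaging of your symmetrization map $s$ and likewise rests on the invertibility of $m!$, and it treats (c) by exactly the same direct-sum computation. (Only your parenthetical that $s$ is \emph{identically} zero on a trivializing open when $\mathrm{char}(k)\le m$ is inaccurate for rank $\ge 2$ --- it is nonzero on squarefree monomials --- but this aside plays no role in the proof.)
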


\begin{proof}
We first assume that $\mathrm{ch}(k)=0$ or $\mathrm{ch}(k)>m$.
In this case, by considering the paring $S^m(\mathcal{F}^*) \otimes S^m(\mathcal{F}) \to \sO_X$ which sends a local section $f_1 \cdots f_m \otimes x_1 \cdots x_m$ to $\sum_{\sigma \in \mathfrak{S}_m} f_{\sigma(1)}(x_1) \cdots f_{\sigma(m)} (x_m)$, we obtain the morphism $\phi : S^m(\mathcal{F}^*) \to (S^m(\mathcal{F}))^*$.
By the assumption on the characteristic of $k$, we can see that $\phi$ is surjective.
Since $S^m(\mathcal{F}^*)$ and $(S^m(\mathcal{F}))^*$ are locally free sheaves of the same rank, the morphism $\phi$ is isomorphism, which proves $D_m(\mathcal{F}) \cong S^m(\mathcal{F})$.

If $\mathcal{F}$ is a direct sum of line bundles $\oplus_{i=1}^n L_i$, then 
\begin{eqnarray*}
D_m( \mathcal{F}) \cong (S^m( \oplus_{i=1}^n L_i^*))^* &\cong& \bigoplus_{\lambda_1 + \cdots \lambda_n=m } ((L_{1}^*)^{\otimes \lambda_1} \otimes \cdots \otimes (L_{n}^*)^{\otimes \lambda_n})^*\\
&\cong& \bigoplus_{\lambda_1 + \cdots \lambda_n=m } L_{1}^{\otimes \lambda_1} \otimes \cdots \otimes L_{n}^{\otimes \lambda_n}\\ 
&\cong& S^m( \mathcal{F}).
\end{eqnarray*}
\end{proof}

\subsection{Algebraic spaces}
In this subsection, we quickly review the basic notions of algebraic spaces that will be used in the proof of Theorem \ref{smoothing}. 
The reader is referred to \cite[Section 5]{Ol} for the definition and basic properties of algebraic spaces.

For a scheme $X$ over a field $k$, $(\mathrm{Sch}/k)$ denotes the \textit{big \'{e}tale site} of $\Spec k$ and $h_X$ denotes the functor 
\[
h_X : = \Hom_k(-, X): (\mathrm{Sch}/k)^{\mathrm{op}} \to (\mathrm{Set})
\]
from the opposite category of $(\mathrm{Sch}/k)$ to the category of sets ($\mathrm{Set})$.
We note by \cite[Theorem 4.1.2]{Ol} that the functor $h_X$ is an \textit{algebraic space} over $k$.
By Yoneda Lemma, the rule $X \mapsto h_X$ defines a fully faithful functor from $(\mathrm{Sch}/k)$ to the category of algebraic spaces over $k$.
Therefore, we often identify a scheme $X$ with the algebraic space $h_X$.
Let $G : (\mathrm{Sch}/k)^{\mathrm{op}} \to (\mathrm{Set})$ be an algebraic space over $S$ and $|G|$ denotes the \textit{underlying topological space} of $G$ (see \cite[Definition 03BY]{Sta}).

\begin{rem}
Since an algebraic space is automatically a sheaf in the fppf topology (see \cite[Theorem 5.2.2]{Ol} or \cite[Lemma 076M]{Sta}), the definition of algebraic spaces in \cite{Ol} is the same as that in \cite[Definition 025Y]{Sta}.
Moreover, it follows from \cite[Lemma 03K4]{Sta} that if an algebraic space $G$ is decent (see \cite[Definition 03I8]{Sta}), then the definition of the underlying topological space of $G$ in \cite[6.3.3]{Ol} is the same as that in \cite{Sta}.
We also note that a separated algebraic space is decent by \cite[Subsection 03I7]{Sta}.
\end{rem}

A morphism $f : G \to H$ of algebraic spaces induces a continuous morphism $|f|: |G| \to |H|$.
If $f$ is an immersion (resp. open immersion, closed immersion), then so is $|f|$ by \cite[Lemma 04CD]{Sta}.

\begin{lem}\label{univ inj}
Let $f: H \to G$ be a morphism of algebraic spaces over a field $k$, $\{ G_j \}_{j \in J}$ be a set of algebraic spaces and $i_j: G_j \to G$ be immersions such that 
\[
|G|=\bigcup_{j \in J} |i_j|(|G_j|).
\] 
If the induced morphism $f_j:f^{-1}(G_j) \to G_j$ is surjective $($resp. universally injective$)$ $($see \cite[Definition 03ME, 03MV]{Sta}$)$ for every $j \in J$, then so is $f$.
\end{lem}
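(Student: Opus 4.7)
The plan is to handle surjectivity and universal injectivity separately, reducing each to the corresponding property of the $f_j$. For surjectivity, the statement is purely topological: it suffices to show that $|f|\colon |H|\to |G|$ is surjective. Given a point $g\in |G|$, by hypothesis we can choose $j\in J$ and $g_j\in |G_j|$ with $|i_j|(g_j)=g$. Since $f_j$ is surjective, pick $\tilde h\in |f^{-1}(G_j)|$ with $|f_j|(\tilde h)=g_j$. The projection $f^{-1}(G_j)=H\times_G G_j\to H$ is the base change of the immersion $i_j$, hence itself an immersion; let $h\in |H|$ be the image of $\tilde h$. Commutativity of the cartesian square then gives $|f|(h)=|i_j|(g_j)=g$.

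For universal injectivity, I would use the characterization that $f$ is universally injective if and only if, for every field $K$ and every pair of morphisms $a,b\colon \Spec K\to H$ with $f\circ a=f\circ b$, one has $a=b$. Set $g:=f\circ a=f\circ b$, a morphism $\Spec K\to G$ whose set-theoretic image is a single point of $|G|$; by assumption this point lies in $|i_j|(|G_j|)$ for some $j$. The key observation is that $g$ actually factors through $G_j$ as a morphism of algebraic spaces, not merely topologically: writing $G_j\hookrightarrow U\hookrightarrow G$ as a closed immersion into an open, the open part is automatic, while the closed part holds because $\Spec K$ is a field point and hence any ideal sheaf contained in the maximal ideal at the image pulls back to zero. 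Given this factorization, the universal property of the fiber product lifts $a$ and $b$ uniquely to $\tilde a,\tilde b\colon \Spec K\to f^{-1}(G_j)$, and by construction $f_j\circ \tilde a=g=f_j\circ \tilde b$. Universal injectivity of $f_j$ forces $\tilde a=\tilde b$, and composing with the monomorphism $f^{-1}(G_j)\to H$ yields $a=b$.

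The only real subtlety in this plan is the factorization claim for field-valued points through a locally closed subspace, which is essentially formal but deserves to be stated carefully; once it is in hand, both halves of the lemma are direct reductions to the hypothesis on each $f_j$, with no further input needed beyond the behavior of immersions under base change and the definition of \textit{universally injective} via field-valued points.
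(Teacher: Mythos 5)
Your argument is correct and is essentially an expansion of the paper's one-line proof, which simply cites the Stacks Project lemma that $|H\times_G G_j|$ surjects onto $|H|\times_{|G|}|G_j|$; both of your reductions (surjectivity by chasing a point through the fiber product, universal injectivity via field-valued points together with the factorization of a reduced point through a locally closed subspace) are sound. The one loosely worded step --- that a $K$-point whose image lies set-theoretically in a closed subspace factors through it scheme-theoretically --- is indeed correct, since the kernel of a ring map to a field is exactly the prime corresponding to the image point and hence contains the defining ideal.
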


\begin{proof}
The assertion follows from \cite[Lemma 03H4]{Sta}.
\end{proof}

Let $(G_i^{\circ} \subset G_i, f_i: H_i \to G_i)$ be \textit{elementary distinguished squares} (see \cite[Definition 08GM]{Sta}) for $i=1,2$ and $H_i^{\circ} \subset H_i$ be the fiber product $G_i^{\circ} \times_{G_i} H_i$.
\[
\xymatrix{
H_i^{\circ} \ar[r] \ar[d] & H_i \ar^-{f_i}[d]\\
G_i^{\circ} \ar[r] & G_i
}\]
For $i=1,2$, let $T_i \subset G_i$ be the reduced closed subspace such that $|T_i|=|G_i| \setminus |G_i^{\circ}|$ and let $T_i' \subset H_i$ be the reduced closed subspace such that $|T_i'| = |H_i| \setminus |H_i^{\circ}|$.
We remark that such subspaces exist uniquely by \cite[Lemma 03IQ]{Sta}.
It follows from the definition of elementary distinguished squares that $T_i' \cong f_i^{-1}(T_i)$ for $i=1,2$.

Let $\phi_G: G_1 \to G_2$ and $\phi_H : H_1 \to H_2$ be morphisms of algebraic spaces such that $\phi_G \circ f_1 = f_2 \circ \phi_H$.
We further assume that $|\phi_G| (|G_1^{\circ}|) \subset |G_2^{\circ}|$ and $|\phi_H|(|H_1^{\circ}|) \subset |H_2^{\circ}|$.
By \cite[Lemma 03IE]{Sta}, we have the following diagram in which each square is commutative and the front and back squares are Cartesian.
\begin{equation}\label{diagram1}
\vcenter{
\xymatrix{
& H_1^{\circ} \ar[rr] \ar[dd]|(.5)\hole \ar[ld] && H_1 \ar^-{f_1}[dd] \ar^-{\phi_H}[ld]\\
H_2^{\circ} \ar[rr] \ar[dd]  && H_2 \ar^(.4){f_2}[dd]  & \\
& G_1^{\circ} \ar[rr]|(.5)\hole \ar[ld]&& G_1 \ar^-{\phi_G}[ld]\\
G_2^{\circ} \ar[rr] && G_2 &
}}
\end{equation}

\begin{prop}\label{cube}
With the above notation, we further assume that the top and left squares in the diagram \eqref{diagram1} are Cartesian.
Then the remaining two squares, the bottom and right squares, are also Cartesian.
\end{prop}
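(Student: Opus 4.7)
The plan is to establish that the bottom square is Cartesian first, and then to reduce the right square to a universal-bijectivity statement about the comparison map $\psi : H_1 \to P := G_1 \times_{G_2} H_2$.

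For the bottom square, since $G_1^\circ \hookrightarrow G_1$ and $G_2^\circ \hookrightarrow G_2$ are open immersions, being Cartesian amounts to the equality $G_1^\circ = \phi_G^{-1}(G_2^\circ)$ of open subspaces of $G_1$. The inclusion $G_1^\circ \subseteq \phi_G^{-1}(G_2^\circ)$ is the standing hypothesis $|\phi_G|(|G_1^\circ|) \subseteq |G_2^\circ|$. For the reverse inclusion, the top square being Cartesian gives $H_1^\circ = \phi_H^{-1}(H_2^\circ)$, hence $\phi_H(T_1') \subseteq T_2'$ on reduced closed complements. Composing with the isomorphisms $f_i : T_i' \xrightarrow{\sim} T_i$ supplied by the elementary distinguished square structure, together with the commutativity $\phi_G \circ f_1 = f_2 \circ \phi_H$, one obtains $\phi_G(T_1) \subseteq T_2$, which yields $\phi_G^{-1}(G_2^\circ) \subseteq G_1^\circ$.

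For the right square, let $\pi_1 : P \to G_1$ and $\pi_2 : P \to H_2$ denote the projections, and let $\psi : H_1 \to P$ be the morphism induced by $f_1$ and $\phi_H$; the task is to show $\psi$ is an isomorphism. Since $\pi_1$ is \'etale (as the base change of $f_2$) and $f_1 = \pi_1 \circ \psi$ is \'etale, the cancellation property for \'etale morphisms forces $\psi$ to be \'etale. It therefore suffices to verify that $\psi$ is universally bijective, since an \'etale, universally bijective morphism of algebraic spaces is an isomorphism (the \'etale universally injective part is an open immersion, and surjectivity then upgrades it to an isomorphism).

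To check universal bijectivity, invoke Lemma \ref{univ inj} applied to the pullback of the cover $|G_1| = |G_1^\circ| \cup |T_1|$ via $f_1 = \pi_1 \circ \psi$; this reduces the problem to verifying that the restrictions $\psi|_{H_1^\circ}$ and $\psi|_{T_1'}$ are isomorphisms. Over $G_1^\circ$, base change combined with the bottom square (just established) identifies $P \times_{G_1} G_1^\circ$ with $G_1^\circ \times_{G_2^\circ} H_2^\circ$, and under this identification $\psi|_{H_1^\circ}$ is precisely the comparison map which is an isomorphism because the left square is assumed Cartesian. Over $T_1$, the inclusion $\phi_G(T_1) \subseteq T_2$ factors $T_1 \to G_2$ through $T_2$, so $P \times_{G_1} T_1 = T_1 \times_{T_2} T_2' \cong T_1$ via $\pi_1$, and $\psi|_{T_1'}$ is identified with the isomorphism $f_1|_{T_1'} : T_1' \to T_1$. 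The main bookkeeping challenge is to thread the compatibilities of these two Cartesian identifications carefully through the cube and to ensure that the topological arguments genuinely promote to isomorphisms in the category of algebraic spaces, which is exactly where the \'etaleness of $\psi$ combined with Lemma \ref{univ inj} becomes essential.
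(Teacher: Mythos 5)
Your proof is correct and follows essentially the same route as the paper's: the bottom square is handled by the same topological argument on the complements $T_i \cong T_i'$, and the right square by reducing to the comparison map $\psi : H_1 \to G_1\times_{G_2}H_2$ being \'etale, surjective and universally injective, checked separately over $G_1^{\circ}$ (via the left square) and over $T_1$ (via the isomorphisms $T_i'\xrightarrow{\sim}T_i$) using Lemma \ref{univ inj}. The only cosmetic difference is that you establish \'etaleness of $\psi$ first by cancellation through the projection $\mathrm{pr}_1$, whereas the paper deduces it at the end directly from $f_1$ and $f_2$ being \'etale.
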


\begin{proof}
We first look at the bottom square
\[
\xymatrix{
G_1^{\circ} \ar[r] \ar[d] & G_1 \ar^-{\phi_G}[d]\\
G_2^{\circ} \ar[r] & G_2.
}\]
Since the top square is Cartesian, it follows from \cite[Lemma 03H4]{Sta} that $|\phi_H|^{-1}(|T_2'|)=|T_1'|$.
Combining with the fact that $f_i: T_i' \to T_i$ is isomorphic for $i=1,2$, we have $|\phi_G|^{-1}(|T_2|)=|T_1|$, which implies $|\phi_G|^{-1}(|G_2^{\circ}|)=|G_1^{\circ}|$.
Therefore, the bottom square is Cartesian by \cite[Lemma 03BZ]{Sta}.

We next look at the right square in the diagram \eqref{diagram1}.
Let $\tilde{H} : = G_1 \times_{G_2} H_2$ denote the fiber product and $\psi: H_1 \to \tilde{H}$ denote the induced morphism.
\begin{equation}\label{diagram2}
\vcenter{
\xymatrix{
H_1 \ar^-{\psi}[r] \ar_-{f_1}[rd] & \tilde{H}  \ar^-{\mathrm{pr}_1}[d] \ar^-{\mathrm{pr}_2}[r] & H_2 \ar^-{f_2}[d]\\
& G_1 \ar^-{\phi_G}[r] & G_2
}}
\end{equation}
By \cite[Lemma 05W5]{Sta}, it suffices to show that $\psi$ is \'{e}tale, surjective and universally injective.
Let $\tilde{T} : = \mathrm{pr}_1^{-1}(T_1)$ and $\tilde{H}^{\circ} : = \mathrm{pr}_1^{-1}(G_1^{\circ})$ denote the fiber products and $\psi_T: T_1' \to \tilde{T}$ and $\psi^{\circ}: H_1^{\circ} \to \tilde{H}^{\circ}$ denote the morphisms induced by $\psi: H_1 \to \tilde{H}$.
Then the diagram \eqref{diagram2} induces a Cartesian diagram
\[
\xymatrix{
 \tilde{H}^{\circ}  \ar^-{}[d] \ar^-{}[r] & H_2^{\circ} \ar^-{}[d]\\
G_1^{\circ} \ar^-{}[r] & G_2^{\circ}. 
}\]
Combining with the assumption that the left square in the diagram \eqref{diagram1} is Cartesian, we  see that the morphism $\psi^{\circ}: H_1^{\circ} \to \tilde{H}^{\circ}$ is isomorphic.
On the other hand, since $|\phi_G|(|T_1|) \subseteq |T_2|$, by \cite[Lemma 03JJ]{Sta}, the diagram \eqref{diagram2} induces another diagram
\[
\xymatrix{
T_1' \ar^-{\psi_T}[r] \ar[rd] & \tilde{T} \ar^-{}[d] \ar^-{}[r] & T_2' \ar^-{}[d]\\
& T_1 \ar^-{\phi_G}[r] & T_2
}\]
in which the square is Cartesian.
The morphisms $T_1' \to T_1$ and $T_2' \to T_2$ in the above diagram are both isomorphic, and then so is $\psi_T$. 
By the definition of elementary distinguished squares and the fact that the back square in the diagram \eqref{diagram1} is Cartesian, $\psi^{-1}(\tilde{T})=T_1'$ and $\psi^{-1}(\tilde{H}^{\circ})=H_1^{\circ}$. 
It then follows from Lemma \ref{univ inj} that $\psi$ is surjective and universally injective.
Finally, since $f_1$ and $f_2$ are both \'{e}tale, applying \cite[Lemma 0466, 05W3]{Sta} to the diagram \eqref{diagram2}, we conclude that $\psi$ is \'{e}tale.
\end{proof}

From now on, all algebraic spaces are assumed to be separated and of finite type over a base field $k$.

We next discuss the abelian category $\mathrm{Qcoh}(G)$ of quasi-coherent sheaves on the \textit{small \'{e}tale site} $\mathrm{Et}(G)$ of an algebraic space $G$ over a field $k$. 
The reader is referred to \cite[Section 7]{Ol} for the definition and basic properties of quasi-coherent sheaves.
The \textit{tangent sheaf} $T_G$ of $G$ is defined as $T_G := (\Omega_{G/k})^{*} : = \mathcal{H}om_{\sO_G} (\Omega_{G/k}, \sO_G)$,
where $\sO_G$ and $\Omega_{G}=\Omega_{G/k}$ are the \textit{structure sheaf} and the \textit{cotangent sheaf} of $G$, respectively. 
When $G$ is normal, the \textit{canonical sheaf} $\omega_G$ of $G$ is defined as $\omega_G := (\bigwedge^{\dim |{G}|} \Omega_{G/k})^{**}$. 
Similarly, for a \textit{Deligne-Mumford stack} $\mathcal{X} \to (\mathrm{Sch}/k)$, $\mathrm{Qcoh}(\mathcal{X})$ denotes the abelian category of quasi-coherent sheaves on the \'{e}tale site $\mathrm{\textup{\'{E}}t}(\mathcal{X})$ of $\mathcal{X}$ (see \cite[Sections 8, 9]{Ol} for details).
Note 
that for an algebraic space $G$, the natural functor $\mathrm{Et}(G) \to \mathrm{\textup{\'{E}}t}(\mathcal{S}_G)$ is equivalent, 
where $\mathcal{S}_G$ is the Deligne-Mumford stack associated to $G$ (see \cite[3.2.7]{Ol}, \cite[Lemma 03YS]{Sta}).
Therefore, the natural functor $\mathrm{Qcoh}(\mathcal{S}_G) \to \mathrm{Qcoh}(G)$ is also equivalent.

Let $G, H$ be integral algebraic spaces of finite type over a field $k$ (see \cite[Definition 0AD4]{Sta}) and $f: G \to H$ be a birational morphism (see \cite[Definition 0ACV]{Sta}).
Let $U \subset H$ denote the maximal open subspace such that the induced morphism $f^{-1}(U) \to U$ is isomorphic.
The \textit{exceptional locus} of $f$ is defined as the closed subset $\mathrm{Exc}(f) : =|G| \setminus |f^{-1}(U)| \subseteq |G|$.
We say that $f$ is \textit{small} if $\dim |G|- \dim (\mathrm{Exc}(f)) \ge 2$.

\begin{lem}\label{space small}
Let $f: G \to H$ be a proper small birational morphism of normal integral algebraic spaces of finite type over a perfect field $k$.
Then the following hold.
\begin{enumerate}
\item $f_* T_G \cong T_H$.
\item $R^i f_* \mathcal{F} =0$ for every integer $i \ge \dim |G| -1$ and every coherent sheaf $\mathcal{F}$ on $G$.
\end{enumerate}
\end{lem}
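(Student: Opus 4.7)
The plan is to exploit two facts that the smallness hypothesis provides: the exceptional locus has codimension $\ge 2$ in $|G|$, and consequently its image has codimension $\ge 2$ in $|H|$. Combined with reflexivity of the tangent sheaves and a fiber-dimension bound, both statements should follow rather formally from their scheme-theoretic counterparts, reduced via étale charts.

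For (1), let $U \subseteq H$ denote the maximal open subspace over which $f$ restricts to an isomorphism, and put $V := f^{-1}(U) \subseteq G$, with inclusions $j_U : U \hookrightarrow H$ and $j_V : V \hookrightarrow G$. The smallness of $f$ says exactly that $|G| \setminus |V| = \Exc(f)$ has codimension $\ge 2$ in $|G|$, and since $f$ is proper birational between normal integral spaces, $|H| \setminus |U|$ also has codimension $\ge 2$ in $|H|$. The sheaves $T_G$ and $T_H$ are duals of coherent sheaves, hence reflexive; on a normal integral algebraic space, a reflexive coherent sheaf coincides with the pushforward of its restriction to any big open, so $T_G \cong (j_V)_* T_V$ and $T_H \cong (j_U)_* T_U$. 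Since $f|_V : V \to U$ is an isomorphism, $(f|_V)_* T_V \cong T_U$, and functoriality of pushforward gives
\[
f_* T_G \cong f_* (j_V)_* T_V \cong (j_U)_* (f|_V)_* T_V \cong (j_U)_* T_U \cong T_H.
\]

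For (2), the argument rests on the uniform fiber dimension bound $\max_{h \in |H|} \dim f^{-1}(h) \le \dim |G| - 2$. Indeed, fibers over points of $U$ are singletons, while for $h \in |H| \setminus |U|$ we have $f^{-1}(h) \subseteq \Exc(f)$, so $\dim f^{-1}(h) \le \dim \Exc(f) \le \dim|G|-2$ by the smallness hypothesis. The standard vanishing $R^i f_* \mathcal{F} = 0$ for $i$ exceeding the maximal fiber dimension (Grothendieck, in the scheme case) then yields $R^i f_* \mathcal{F} = 0$ for all $i \ge \dim |G| - 1$; for algebraic spaces this is obtained by passing to an étale atlas of $H$ and reducing to the scheme-theoretic statement, since formation of $R^i f_*$ is compatible with étale base change.

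The main technical obstacle is less the geometric content than the bookkeeping needed to transfer each classical fact (reflexivity of duals of coherent sheaves, the extension property of reflexive sheaves across codimension-$2$ complements, and the cohomological dimension bound by fiber dimension) from schemes to algebraic spaces. Each of these can be handled étale-locally on $H$ and $G$, using that $\Omega$, $\mathcal{H}\mathrm{om}$, pushforward along separated morphisms, and codimension all behave well under étale pullback; so the work is mainly in setting up the right étale cover rather than in any genuinely new argument.
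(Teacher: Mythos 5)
Your argument is correct and is essentially the proof given in the paper: for (1) the paper likewise uses reflexivity of $T_G$ and $T_H$ to identify them with the pushforwards of their restrictions to $f^{-1}(U)$ and $U$ and then concludes by functoriality of pushforward, and for (2) it cites the Stacks Project lemmas that encode exactly the fiber-dimension vanishing you invoke. The only difference is that the paper delegates the \'etale-local bookkeeping to references rather than sketching it.
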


\begin{proof}
(1) Let $U \subset H$ be the largest open subspace such that $f|_U: f^{-1}(U) \to U$ is isomorphic.
Let $i: f^{-1}(U) \hookrightarrow G$ and $j: U \hookrightarrow H$ denote natural inclusions.
Noting that $T_G$ is reflexive, that is, the natural morphism $T_G \to (T_G)^{**}$ is isomorphic, we see that the morphism $T_G \to i_* T_{f^{-1}(U)}$ is isomorphic.
Similarly, the morphism $T_H \to j_*T_{U}$ is also isomorphic. 
Therefore, we have the isomorphisms 
\[f_* T_G \cong f_* i_* T_{f^{-1}(U)} \cong j_* T_U \cong T_H.\]
(2) The assertion follows from \cite[Lemma 0A4T]{Sta} and \cite[Lemma 0A4J]{Sta}. 
\end{proof}

Let $k$ be a perfect field of characteristic $p>0$ and $F_k : \Spec k \to \Spec k$ denote the Frobenius morphism.
Given a $k$-scheme $Y$ with structure morphism $\pi: Y \to \Spec k$, 
let $Y^{(1)}$ be the $k$-scheme identified with $Y$ as a scheme but with structure morphism $F_k \circ \pi: Y \to \Spec k$.
Note that the rule $Y \mapsto Y^{(1)}$ induces a functor $\tilde{F_k} : (\mathrm{Sch}/k) \to (\mathrm{Sch}/k)$, which is isomorphic since $k$ is perfect.
Moreover, the Frobenius morphism $F: Y^{(1)} \to Y$ defines a natural transformation $F: \tilde{F_k} \to \mathrm{id}$.
Given an algebraic space $G$ over $k$, we define an algebraic space $G^{(1)}$ as the composite functor 
\[G^{(1)} : = G \circ \tilde{F_k} : ( \mathrm{Sch}/k) \to (\mathrm{Set}).\]
We define the \textit{relative Frobenius morphism} of $G$ as the morphism $F_G : G \to G^{(1)}$ of algebraic spaces induced by the natural transformation $F: \tilde{F_k} \to \mathrm{id}$.
We note that for a $k$-scheme $X$, the morphism $F_{h_X}: h_X \to (h_X)^{(1)}$ coincides with the relative Frobenius morphism $F: X \to X' : =X \times_{k, F_k} k$ via the identification of $(h_X)^{(1)}$ with $h_{X'}$.
We say that $G$ is \textit{globally $F$-split} if the natural morphism 
\[
\sO_G \to (F_G)_* \sO_{G^{(1)}}
\]
splits as an $\sO_G$-module-homomorphism.
We note that a $k$-scheme $X$ is globally $F$-split if and only if so is $h_X$, because $k$ is perfect.

\begin{lem}\label{space small GFS}
Let $f: G \to H$ be a proper small birational morphism of normal integral algebraic spaces over a perfect field of characteristic $p>0$.
Then $G$ is globally $F$-split if and only if so is $H$.
\end{lem}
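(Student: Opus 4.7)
My plan is to mimic the scheme-theoretic proof of Lemma \ref{small} for small birational morphisms, using pushforward of splittings in one direction and extension across the codimension $\geq 2$ locus in the other. The key preliminary input is that for a proper birational morphism $f: G \to H$ between normal integral algebraic spaces, $f_*\sO_G = \sO_H$; this can be verified \'{e}tale-locally on $H$, where it reduces to the classical statement for normal integral schemes (the pushforward is a coherent $\sO_H$-subalgebra of the constant sheaf of rational functions containing $\sO_H$). The same argument applied to $f^{(1)}: G^{(1)} \to H^{(1)}$ gives $f^{(1)}_*\sO_{G^{(1)}} = \sO_{H^{(1)}}$. Combined with the commutative Frobenius square $F_H \circ f = f^{(1)} \circ F_G$, this yields an identification $f_*(F_{G*}\sO_{G^{(1)}}) \cong F_{H*}\sO_{H^{(1)}}$ under which $f_*(F^{\#}_G)$ corresponds to $F^{\#}_H$.

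For the direction ``$G$ globally $F$-split $\Rightarrow$ $H$ globally $F$-split'', I would take a splitting $\sigma : F_{G*}\sO_{G^{(1)}} \to \sO_G$ of $F^{\#}_G$ and push it forward by $f$: the composition $f_*\sigma \circ F^{\#}_H$ equals $f_*(\sigma \circ F^{\#}_G) = f_*(\mathrm{id}_{\sO_G}) = \mathrm{id}_{\sO_H}$ under the identifications above, so $f_*\sigma$ is the required splitting on $H$.

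For the converse direction, let $U \subseteq H$ be the largest open subspace over which $f$ restricts to an isomorphism; by smallness of $f$, both $|H|\setminus|U|$ and $|G|\setminus|f^{-1}(U)|$ have codimension $\geq 2$. Given a splitting $\tau : F_{H*}\sO_{H^{(1)}} \to \sO_H$, its restriction to $U$ transfers via the isomorphism $f^{-1}(U) \xrightarrow{\sim} U$ to a splitting $\tau'$ of $F^{\#}_G$ over $f^{-1}(U)$. The sheaf $\mathcal{H}\mathrm{om}_{\sO_G}(F_{G*}\sO_{G^{(1)}}, \sO_G)$ is reflexive on the normal algebraic space $G$ (sheaf-$\mathrm{Hom}$ from a coherent sheaf into an invertible, hence reflexive, target), so sections over any open whose complement has codimension $\geq 2$ extend uniquely to global sections. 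Thus $\tau'$ extends uniquely to a global $\tilde\tau : F_{G*}\sO_{G^{(1)}} \to \sO_G$. The relation $\tilde\tau \circ F^{\#}_G = \mathrm{id}_{\sO_G}$ holds on the dense open $f^{-1}(U)$ by construction and therefore globally, again by the uniqueness of extension of a reflexive sheaf's section, so $G$ is globally $F$-split.

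The main obstacle I anticipate is a technical one: verifying in the algebraic space setting (rather than for schemes) that coherent reflexive sheaves on a normal integral algebraic space have the $S_2$ property and, consequently, that sections extend uniquely across codimension $\geq 2$ closed subspaces. This should reduce to the scheme case via an \'{e}tale presentation $V \to G$ with $V$ a normal integral scheme, provided one checks that codimension of closed subspaces is preserved under \'{e}tale pullback, but the bookkeeping with $|G|$ versus scheme-theoretic codimension is the delicate point.
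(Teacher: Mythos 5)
Your proposal is correct and follows essentially the same route as the paper, which simply invokes the standard scheme-theoretic arguments of Mehta--Ramanathan and of \cite[Lemma 2.14]{GOST}: push the splitting forward via $f_*\sO_G=\sO_H$ in one direction, and in the other restrict to the common open locus and extend across the codimension $\ge 2$ exceptional set using reflexivity of the relevant $\mathcal{H}\mathrm{om}$ sheaf. The technical point you flag (transporting $S_2$/extension properties of reflexive sheaves to normal algebraic spaces via an \'etale presentation) is exactly the bookkeeping the paper leaves implicit, and your treatment of it is sound.
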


\begin{proof}
The proof is very similar to those of \cite[Proposition 4]{MR} and \cite[Lemma 2.14]{GOST}.
\end{proof}

\section{Akizuki-Nakano vanishing for globally \texorpdfstring{$F$-split}{F-split} 3-folds}\label{vanishing section}
Since any smooth globally $F$-split variety over a perfect field $k$ of characteristic $p>0$ lifts to $W_2(k)$ (see \cite[p.164]{Il2} or \cite[Corollary 9.2]{Ki}), by a result of Raynaud (see \cite[Corollaire 2.8]{DI}), 
it satisfies the Akizuki-Nakano vanishing theorem if $p \ge \dim X$. 
In particular, the Akizuki-Nakano vanishing theorem holds on smooth $F$-split surfaces. 
Kawakami recently proved that it also holds on possibly singular globally $F$-split surfaces, using Graf's extension theorem \cite{Gr}. 

\begin{thm}[cf.~\textup{\cite[Corollary 4.8]{Kaw}}]\label{dim 2}
Let $X$ be a normal globally $F$-split projective surface over a perfect field of characteristic $p>0$ and $\mathcal{H}$ be an ample line bundle on $X$. 
If $X$ has only strongly $F$-regular singularities, then for every nonnegative integers $i, j$ with $i+j<2$, one has 
\[H^i(X, \Omega_X^{[j]} \otimes \mathcal{H}^{-1})=0.\]
\end{thm}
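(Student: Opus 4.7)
The statement asserts vanishing for three pairs $(i,j)$ with $i+j<2$, namely $(0,0)$, $(1,0)$, and $(0,1)$. The plan is to dispatch each case separately; the first two are routine and the third is the heart of the matter.

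For $(i,j)=(0,0)$, the vanishing $H^0(X,\mathcal{H}^{-1})=0$ is immediate because $\mathcal{H}$ is ample on a projective variety of positive dimension, so $\mathcal{H}^{-1}$ has no nonzero global sections. For $(i,j)=(1,0)$, recall that globally $F$-regular varieties are Cohen-Macaulay (Remark \ref{GFR are CM}) and in particular globally $F$-split, so Proposition \ref{kodaira}(2) immediately yields $H^1(X,\mathcal{H}^{-1})=0$.

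The remaining case $(i,j)=(0,1)$ is the genuine content. My plan is to reduce from $X$ to a smooth model. By Remark \ref{hierarchy}(2), $X$ is klt, so I may fix a resolution $\pi\colon Y\to X$ (say the minimal one). Using Graf's extension theorem \cite{Gr} for reflexive differentials on klt varieties in positive characteristic, the natural map $\pi_*\Omega_Y^1\to\Omega_X^{[1]}$ is an isomorphism. Combined with the projection formula, this gives
\[
H^0(X,\Omega_X^{[1]}\otimes\mathcal{H}^{-1})\;\cong\;H^0(Y,\Omega_Y^1\otimes\pi^*\mathcal{H}^{-1}),
\]
and it remains to vanish the right-hand side, where $\pi^*\mathcal{H}$ is nef and big on the smooth projective surface $Y$.

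The hard part is exactly this final vanishing. One cannot directly transplant the global $F$-splitness of $X$ to $Y$ (the minimal resolution of a klt surface singularity need not be crepant, so neither Lemma \ref{small} nor Lemma \ref{crepant} applies). So my plan is to argue at the level of $X$: suppose for contradiction that $\eta\neq 0$ is a section of $\Omega_X^{[1]}\otimes\mathcal{H}^{-1}$, inducing an injection $\mathcal{H}\hookrightarrow\Omega_X^{[1]}$. Restricting to the regular locus $U\subseteq X$ and applying the Cartier isomorphism to the push-forwards $F^e_*\Omega_U^1$, the $e$-th Frobenius splitting of $X$ furnished by global $F$-regularity (applied to a suitable effective Weil divisor) transports $\eta$ to a section of $\Omega_X^{[1]}\otimes\mathcal{H}^{-p^e}$ for arbitrarily large $e$. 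Serre vanishing (or simply ampleness combined with Proposition \ref{kodaira}) then produces a contradiction. This is the strategy carried out in \cite[Corollary 4.8]{Kaw}, and the role of the extension theorem is precisely to make the Cartier-isomorphism manipulations on $U$ yield information about global sections on $X$.
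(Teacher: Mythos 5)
Your proposal is correct and takes essentially the same route as the paper: the paper's proof simply invokes Proposition \ref{kodaira} for the $j=0$ cases and cites \cite[Corollary 4.8]{Kaw} for the $i=0$ cases, and your treatment of $(0,1)$ is just an unpacking of the argument behind that citation (Graf's extension theorem plus a Bogomolov--Sommese-type Frobenius/Cartier argument on the regular locus). No gaps to report.
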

\begin{proof}
The case where $i=0$ follows from \cite[Corollary 4.8]{Kaw} and the case where $j=0$ does from Proposition \ref{kodaira}. 
\end{proof}

In this section, we study the Akizuki-Nakano vanishing theorem for globally $F$-split 3-folds. 

\begin{prop}\label{vanishing (2,0)}
Let $X$ be a normal globally $F$-split proper variety of dimension $n \ge 2$ over a perfect field of characteristic $p>0$ and $\mathcal{F}$ be a globally generated line bundle on $X$ such that the morphism $\Phi_{\mathcal{F}}:X \to \mathbb{P}^N=\mathbb{P}(H^0(X,\mathcal{F})^*)$ induced by $\mathcal{F}$ is generically \'etale on its image. 
Let $\mathcal{G}$ be a line bundle on $X$ such that $H^{n-1}(X, \mathcal{F}\otimes \mathcal{G})=H^n(X, \mathcal{G})=0$. 
Then 
\[H^0(X, \Omega_X^{[n-1]} \otimes (\mathcal{F} \otimes \mathcal{G})^{-1})=0.\] 
\end{prop}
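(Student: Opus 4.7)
The plan is to translate the desired $H^0$-vanishing into an $H^n$-vanishing via top-degree Serre duality, and then reduce the latter to a direct cohomology computation on the pulled-back Euler sequence. Set $\mathcal{M} := \mathcal{F} \otimes \mathcal{G}$ for brevity.

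First I would observe that the reflexive sheaf $\mathcal{H}\mathrm{om}(\Omega_X^{[1]}, \omega_X)$ coincides with $T_U \otimes \omega_U = \Omega_U^{n-1}$ on the smooth locus $U \subseteq X$, and hence agrees with $\Omega_X^{[n-1]}$ globally. Tensor-hom adjunction combined with the canonical identification $\Hom_X(-, \omega_X) \cong H^n(X, -)^{\vee}$ (valid for every coherent sheaf on a projective variety of pure dimension $n$, in particular on our normal proper $X$) then yields
\[
H^0(X, \Omega_X^{[n-1]} \otimes \mathcal{M}^{-1}) \cong \Hom(\mathcal{M} \otimes \Omega_X^{[1]}, \omega_X) \cong H^n(X, \Omega_X^{[1]} \otimes \mathcal{M})^{\vee},
\]
reducing the problem to showing $H^n(X, \Omega_X^{[1]} \otimes \mathcal{M}) = 0$.

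I would then consider the canonical map $\psi \colon \Phi_{\mathcal{F}}^* \Omega_{\mathbb{P}^N} \to \Omega_X^{[1]}$ obtained by composing $d\Phi_{\mathcal{F}}$ with the reflexive-hull map $\Omega_X \to \Omega_X^{[1]}$. Generic \'etaleness of $\Phi_{\mathcal{F}}$ on its image makes $\psi$ surjective on a dense open subset of the smooth locus, so its cokernel $\mathcal{C}$ is supported on a closed subset of codimension at least one; Grothendieck's vanishing then yields $H^n(X, \mathcal{C} \otimes \mathcal{M}) = 0$. Factoring $\psi$ through its image and chasing the resulting two short exact sequences at top cohomology produces a surjection
\[
H^n(X, \Phi_{\mathcal{F}}^*\Omega_{\mathbb{P}^N} \otimes \mathcal{M}) \twoheadrightarrow H^n(X, \Omega_X^{[1]} \otimes \mathcal{M}),
\]
so it suffices to show that the source vanishes.

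For this, I would pull back the Euler sequence on $\mathbb{P}^N$ and tensor with $\mathcal{M}$ to obtain the exact sequence
\[
0 \to \Phi_{\mathcal{F}}^*\Omega_{\mathbb{P}^N} \otimes \mathcal{M} \to H^0(X, \mathcal{F}) \otimes \mathcal{G} \to \mathcal{M} \to 0
\]
(using $\mathcal{F}^{-1}\otimes\mathcal{M} = \mathcal{G}$). The long exact cohomology sequence contains
\[
H^{n-1}(X, \mathcal{M}) \to H^n(X, \Phi_{\mathcal{F}}^*\Omega_{\mathbb{P}^N} \otimes \mathcal{M}) \to H^0(X, \mathcal{F}) \otimes H^n(X, \mathcal{G}),
\]
whose outer terms both vanish by the two cohomological hypotheses of the proposition, forcing the middle term to vanish as well. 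The conceptual heart of the argument is the codimension-one support of $\mathrm{Coker}(\psi)$ coming from generic \'etaleness, which is what allows the passage from $\Phi_{\mathcal{F}}^*\Omega_{\mathbb{P}^N}$ to $\Omega_X^{[1]}$ at top cohomology. Notably, the global $F$-split hypothesis does not appear to be used in this sketch; it is presumably present because the cohomology vanishings $H^{n-1}(\mathcal{F}\otimes\mathcal{G}) = H^n(\mathcal{G}) = 0$ are typically deduced from Kodaira-type vanishing on $F$-split varieties.
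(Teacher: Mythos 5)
Your proposal is correct and follows essentially the same route as the paper's proof: reduce via the identification $\Omega_X^{[n-1]} \cong \mathcal{H}\mathrm{om}(\Omega_X^{[1]}, \omega_X)$ and top-degree Serre duality to the vanishing of $H^n(X, \Omega_X^{[1]} \otimes \mathcal{F} \otimes \mathcal{G})$, kill the latter using the pulled-back twisted Euler sequence together with the two cohomological hypotheses, and pass from $\Phi_{\mathcal{F}}^*\Omega_{\PP^N}$ to $\Omega_X^{[1]}$ at top cohomology because generic \'etaleness forces the cokernel (the paper phrases this via $\Omega_{X/\PP^N}$) to be supported on a proper closed subset. Your closing observation that global $F$-splitting is not used in the proof itself, only to supply the vanishing hypotheses in applications, is also consistent with how the paper deploys this proposition.
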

\begin{proof}
Let $U$ be the smooth locus of $X$ and $\iota:U \hookrightarrow X$ be a natural inclusion.
Since 
\[
\Omega_X^{[n-1]}=\iota_*\Omega_U^{n-1} \cong \iota_*\mathcal{H}\mathrm{om}(\Omega_U^1, \omega_U) \cong \mathcal{H}\mathrm{om}(\Omega_X^{[1]}, \omega_X), 
\] 
one has the isomorphisms
\begin{align*}
H^0(X, \Omega_X^{[n-1]} \otimes (\mathcal{F} \otimes \mathcal{G})^{-1}) 
&\cong \mathrm{Hom}(\mathcal{F} \otimes \mathcal{G}, \mathcal{H}\mathrm{om}(\Omega_X^{[1]}, \omega_X))\\
&\cong \mathrm{Hom}(\Omega_X^{[1]} \otimes (\mathcal{F} \otimes \mathcal{G}), \omega_X). 
\end{align*}
Thus, by Serre duality (which holds for top cohomology without the Cohen-Macaulay assumption), it is enough to show that $H^n(X, \Omega_X^{[1]} \otimes (\mathcal{F} \otimes \mathcal{G}))=0$. 

Pulling back the Euler sequence on $\mathbb{P}^N$ twisted by $\mathcal{O}_{\mathbb{P}^N}(1)$ via $\Phi_{\mathcal{F}}$ and tensoring it with $\mathcal{G}$, one has the exact sequence 
\[
0 \to \Phi_{\mathcal{F}}^*\Omega_{\mathbb{P}^N}(1) \otimes \mathcal{G} \to \mathcal{G}^{\oplus N} \to \mathcal{F} \otimes \mathcal{G} \to 0.
\]
It follows from the assumption $H^{n-1}(X, \mathcal{F} \otimes \mathcal{G})=H^n(X, \mathcal{G})=0$ that 
\[H^n(X, \Phi_{\mathcal{F}}^*\Omega_{\mathbb{P}^N}(1) \otimes \mathcal{G})=0.\] 
On the other hand, the first exact sequence 
\[\Phi_{\mathcal{F}}^*\Omega_{\mathbb{P}^N}(1) \otimes \mathcal{G} \to \Omega_X \otimes (\mathcal{F} \otimes \mathcal{G}) \to \Omega_{X/\mathbb{P}^N} \otimes (\mathcal{F} \otimes \mathcal{G}) \to 0\] twisted by $\mathcal{F} \otimes \mathcal{G}$ induces the exact sequence 
\[
H^n(X, \Phi_{\mathcal{F}}^*\Omega_{\mathbb{P}^N}(1) \otimes \mathcal{G}) \to H^n(X, \Omega_X \otimes (\mathcal{F} \otimes \mathcal{G})) \to H^n(X, \Omega_{X/\mathbb{P}^N} \otimes (\mathcal{F} \otimes \mathcal{G})) \to 0. 
\]
Since $\Phi_{\mathcal{F}}$ is generically \'etale on its image, the support of $\Omega_{X/\mathbb{P}^N}$ is a proper closed subset of $X$ and, in particular, $H^n(X, \Omega_{X/\mathbb{P}^N} \otimes (\mathcal{F} \otimes \mathcal{G}))=0$. 
Therefore, $H^n(X, \Omega_X \otimes (\mathcal{F} \otimes \mathcal{G}))=0$. 
Noting that the natural map $\Omega_X \to \Omega_X^{[1]}$ is an isomorphism on $U$, that is, an isomorphism in codimension one, we can conclude that 
\[H^n(X, \Omega_X^{[1]} \otimes ( \mathcal{F} \otimes \mathcal{G})) \cong H^n(X, \Omega_X \otimes (\mathcal{F} \otimes \mathcal{G}))=0.\]
\end{proof}

\begin{prop}\label{vanishing (1,1)}
Let $X$ be a normal globally $F$-split proper 3-fold over a perfect field of characteristic $p>0$ which has only isolated singularities. 
Let $\mathcal{L}$ be a line bundle on $X$, and suppose there exists an integer $1 \le i_0 \le p$ such that $\mathcal{L}^{\otimes i_0}$ is globally generated and the morphism $\Phi_{\mathcal{L}^{\otimes i_0}}:X \to \mathbb{P}^{N}=\mathbb{P}(H^0(X,\mathcal{L}^{\otimes i_0})^*)$ induced by $\mathcal{L}^{\otimes i_0}$ is generically \'etale on its image.
Assume in addition the following three conditions. 
\begin{enumerate}
\item [$\textup{(i)}$] $H^1(X, \Omega_X^{[1]} \otimes \mathcal{L}^{\otimes (-p^e)})=0$ for all sufficiently large $e$. 
\item[$\textup{(ii)}$] $H^{2}(X, \mathcal{L}^{\otimes (\pm p^e)})=0$ for every integer $e \ge 1$. 
\item[$\textup{(iii)}$] $H^3(X, \mathcal{L}^{\otimes (p-i_0)p^e})=0$ for every integer $e \ge 0$. 
\end{enumerate}
Then $H^1(X, \Omega_X^{[1]} \otimes \mathcal{L}^{-1})=0$. 
\end{prop}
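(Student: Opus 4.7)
The plan is to work with the $e$-th iterated Cartier short exact sequences on the smooth locus $U = X \setminus \Sing(X)$, push them forward to $X$ via $\iota : U \hookrightarrow X$, and couple the resulting long exact sequences with hypotheses (i)--(iii) and the Frobenius splitting of $X$. Choose $e$ large enough that (i) gives $H^1(X, \Omega_X^{[1]} \otimes \mathcal{L}^{\otimes(-p^e)}) = 0$. Setting $B_{e,X}^{[i]} := \iota_* B_{e,U}^i$ and $Z_{e,X}^{[i]} := \iota_* Z_{e,U}^i$, the iterated Cartier construction produces three short exact sequences on $X$:
\begin{align*}
(\mathrm{A}_e)\colon\ & 0 \to \sO_X \to F^e_*\sO_X \to B_{e,X}^{[1]} \to 0,\\
(\mathrm{B}_e)\colon\ & 0 \to B_{e,X}^{[1]} \to Z_{e,X}^{[1]} \to \Omega_X^{[1]} \to 0,\\
(\mathrm{C}_e)\colon\ & 0 \to Z_{e,X}^{[1]} \to F^e_*\Omega_X^{[1]} \to B_{e,X}^{[2]} \to 0.
\end{align*}

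Tensoring $(\mathrm{B}_e)$ by $\mathcal{L}^{-1}$, its long exact sequence reduces the desired vanishing to
\[
H^1(X, Z_{e,X}^{[1]} \otimes \mathcal{L}^{-1}) = 0 \ \ \text{and} \ \ H^2(X, B_{e,X}^{[1]} \otimes \mathcal{L}^{-1}) = 0.
\]
For the first, tensor $(\mathrm{C}_e)$ by $\mathcal{L}^{-1}$; by the projection formula $H^1(X, F^e_*\Omega_X^{[1]} \otimes \mathcal{L}^{-1}) = H^1(X, \Omega_X^{[1]} \otimes \mathcal{L}^{\otimes(-p^e)})$, which vanishes by (i), so $H^1(X, Z_{e,X}^{[1]} \otimes \mathcal{L}^{-1})$ is a quotient of $H^0(X, B_{e,X}^{[2]} \otimes \mathcal{L}^{-1})$. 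Since $B_{e,X}^{[2]} \hookrightarrow F^e_*\Omega_X^{[2]}$ by construction, this group injects into $H^0(X, \Omega_X^{[2]} \otimes \mathcal{L}^{\otimes(-p^e)})$, which I kill by applying Proposition \ref{vanishing (2,0)} with $n = 3$, $\mathcal{F} = (\mathcal{L}^{\otimes i_0})^{\otimes p^{e-1}}$, and $\mathcal{G} = \mathcal{L}^{\otimes(p-i_0)p^{e-1}}$: the bundle $\mathcal{F}$ is globally generated (a tensor power of a globally generated bundle) and $\Phi_{\mathcal{F}}$ is generically \'etale on its image (the corresponding function-field extension is a sub-extension of the separable extension cut out by the Veronese composition $v_{p^{e-1}} \circ \Phi_{\mathcal{L}^{\otimes i_0}}$), while the auxiliary vanishings $H^2(X, \mathcal{L}^{\otimes p^e}) = 0$ and $H^3(X, \mathcal{L}^{\otimes(p-i_0)p^{e-1}}) = 0$ are exactly (ii) and (iii).

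For the second vanishing, tensor $(\mathrm{A}_e)$ by $\mathcal{L}^{-1}$; by the projection formula, the long exact sequence contains
\[
H^2(X, \mathcal{L}^{\otimes(-p^e)}) \to H^2(X, B_{e,X}^{[1]} \otimes \mathcal{L}^{-1}) \to H^3(X, \mathcal{L}^{-1}) \xrightarrow{\ \varphi\ } H^3(X, \mathcal{L}^{\otimes(-p^e)}),
\]
whose first term is zero by (ii) and in which $\varphi$ is induced by the Frobenius $\sO_X \to F^e_*\sO_X$ tensored with $\mathcal{L}^{-1}$. Because $X$ is globally $F$-split, this sheaf map splits; hence $\varphi$ is split-injective, forcing $H^2(X, B_{e,X}^{[1]} \otimes \mathcal{L}^{-1}) = 0$, which completes the argument.

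The main technical obstacle I anticipate is establishing that the sequences $(\mathrm{A}_e)$, $(\mathrm{B}_e)$, $(\mathrm{C}_e)$ remain exact after pushforward across the isolated singular points. A priori $\iota_*$ is only left exact, with the defect controlled by $R^1\iota_*$ of locally free sheaves on $U$, concentrated on $\Sing(X)$. Verifying that these defects either vanish or are harmless in the relevant cohomological degrees will rely on the reflexivity of the sheaves involved, the codimension-three nature of $\Sing(X)$, and possibly an analogue of Graf's extension theorem in the vein of Kawakami's approach for surfaces.
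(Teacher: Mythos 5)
Your overall skeleton matches the paper's --- the splitting of Frobenius disposes of $H^2(B^{[1]}\otimes\mathcal{L}^{-1})$, hypothesis (i) disposes of $H^1(F_*\Omega^{[1]}\otimes\mathcal{L}^{-1})$, and Proposition \ref{vanishing (2,0)} (applied just as you do, with the Veronese argument for generic \'etaleness) disposes of $H^0(B^{[2]}\otimes\mathcal{L}^{-1})$ --- but there are two genuine gaps in the implementation. The first is the exactness of $(\mathrm{B}_e)$ and $(\mathrm{C}_e)$ for $e\ge 2$, which you assert as ``the iterated Cartier construction'' without justification; the paper's preliminaries only set up the $e=1$ sequences. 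With the naive definitions $Z^1_e=\ker(F^e_*d^1)$ and $B^2_e=\operatorname{im}(F^e_*d^1)$, the sequence $(\mathrm{C}_e)$ is a tautology but $(\mathrm{B}_e)$ fails for $e\ge 2$ (the quotient $Z^1_e/B^1_e$ is then $F^{e-1}_*\Omega^1$, not $\Omega^1$); with Illusie's higher sheaves $B_e\Omega^i\subseteq Z_e\Omega^i$ the Cartier sequence $(\mathrm{B}_e)$ holds, but the identifications $F^e_*\sO_U/\sO_U\cong B_e\Omega^1_U$ and $F^e_*\Omega^1_U/Z_e\Omega^1_U\cong B_e\Omega^2_U$ needed for $(\mathrm{A}_e)$ and $(\mathrm{C}_e)$ are nontrivial and must be proved or cited. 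The paper sidesteps all of this: it proves the single implication that $H^1(X,\Omega^{[1]}_X\otimes\mathcal{L}^{\otimes(-p)})=0$ forces $H^1(X,\Omega^{[1]}_X\otimes\mathcal{L}^{-1})=0$ using only the $e=1$ sequences, and then reaches the large $e$ of hypothesis (i) by descending induction, applying this implication to $\mathcal{L}^{\otimes p^j}$ (which inherits (ii), (iii) and the generic \'etaleness) for $j=e-1,\dots,0$. You should either adopt that reduction or supply the higher Cartier theory.

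The second gap is the one you flag and leave open: exactness after $\iota_*$. Here you are over-engineering --- no extension theorem is needed, and full exactness of the pushed-forward sequences is neither available in general nor necessary. For $(\mathrm{A})$, the sequence on $U$ is split because $X$ is globally $F$-split, and $\iota_*$ of a split exact sequence is split exact. For $(\mathrm{B})$, one only needs that $Z^{[1]}_X/B^{[1]}_X\to\Omega^{[1]}_X$ is injective and an isomorphism on $U$, so its cokernel is supported on the finitely many singular points and $H^1(X,(Z^{[1]}_X/B^{[1]}_X)\otimes\mathcal{L}^{-1})$ surjects onto $H^1(X,\Omega^{[1]}_X\otimes\mathcal{L}^{-1})$. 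For $(\mathrm{C})$, only the left-exact piece $0\to Z^{[1]}_X\to F_*\Omega^{[1]}_X\to B^{[2]}_X$ is used: replacing $B^{[2]}_X$ by the image $Q$ of the second map, one has $H^0(X,Q\otimes\mathcal{L}^{-1})\subseteq H^0(X,B^{[2]}_X\otimes\mathcal{L}^{-1})\subseteq H^0(X,F_*\Omega^{[2]}_X\otimes\mathcal{L}^{-1})=0$. With these two repairs your argument becomes the paper's proof.
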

\begin{proof}
By the condition (i), we may assume that $H^1(X, \Omega_X^{[1]} \otimes \mathcal{L}^{\otimes (-p)})=0$. 
Let $U$ be the smooth locus of $X$ and $\iota:U \hookrightarrow X$ be a natural inclusion. 
The exact sequence $0 \to B^1_U \to Z^1_U \to \Omega^1_U \to 0$ induces an inclusion $Z^{[1]}_X/B^{[1]}_X \to \Omega_X^{[1]}$, which is an isomorphism on $U$, that is, an isomorphism in codimension one. 
Thus, it suffices to prove that $H^1(X, (Z^{[1]}_X/B^{[1]}_X) \otimes \mathcal{L}^{-1})=0$. 

Since $X$ is globally $F$-split, $0 \to \mathcal{O}_U \to F_*\mathcal{O}_U \to B^1_U \to 0$ is a split exact sequence. 
Pushing it forward by $\iota$, we can see that $0 \to \mathcal{O}_X \to F_*\mathcal{O}_X \to B^{[1]}_X \to 0$ is also a split exact sequence, and the condition (ii) implies that $H^2(X, B^{[1]}_X \otimes  \mathcal{L}^{-1})=0$. 
Therefore, it is enough to show that $H^1(X, Z^{[1]}_X \otimes \mathcal{L}^{-1})=0$. 

The exact sequence $0 \to Z^1_U \to F_*\Omega^1_U \to B^2_U \to 0$ induces an exact sequence 
\begin{equation}
0 \to Z^{[1]}_X \to F_*\Omega^{[1]}_X \to B^{[2]}_X, \tag{$\star$}
\end{equation}
and by assumption, $H^1(X, F_*\Omega^{[1]}_X \otimes \mathcal{L}^{-1}) \cong H^1(X, \Omega^{[1]}_X \otimes \mathcal{L}^{\otimes (-p)})=0$. 
On the other hand, by the choice of $i_0$, the morphism $\Phi_{\mathcal{L}^{\otimes i_0p^e}}:X \to \mathbb{P}^{N_e}=\mathbb{P}(H^0(X,\mathcal{L}^{\otimes i_0p^e})^*)$ induced by $\mathcal{L}^{\otimes i_0p^e}$ is generically \'etale on its image for every integer $e \ge 0$. 
It then follows from the conditions (ii), (iii) and Proposition \ref{vanishing (2,0)}, by substituting $\mathcal{F}=\mathcal{L}^{\otimes i_0}$ and $\mathcal{G}=\mathcal{L}^{\otimes (p-i_o)}$, that $H^0(X, F_*\Omega_X^{[2]} \otimes \mathcal{L}^{-1}) \cong H^0(X, \Omega_X^{[2]} \otimes \mathcal{L}^{\otimes (-p)})=0$. 
Since 
\[B^{[2]}_X=\iota_*B^2_U \subset \iota_*F_*\Omega^2_U \subset F_*\Omega^{[2]}_X,\]
we have $H^0(X, B_X^{[2]} \otimes \mathcal{L}^{-1})=0$. 
Thus, by the above exact sequence $(\star)$, we obtain that $H^1(X, Z^{[1]}_X \otimes \mathcal{L}^{-1})=0$. 
\end{proof}

We show an algebraic space version of Proposition \ref{vanishing (1,1)} that will be needed in the proof of Proposition \ref{relative vanishing}. 
\begin{prop}\label{vanishing on stack}
Let $G$ be a $3$-dimensional globally $F$-split integral algebraic space smooth and proper over a perfect field of characteristic $p>0$ and $\mathcal{L}$ be a line bundle on $G$.
Suppose that there exists an integer $1 \le i_0 \le p$ and a generically \'{e}tale morphism $\Phi: G \to Z$ to an algebraic space represented by a projective variety $Z \subseteq \PP^N_k$ such that $\mathcal{L}^{\otimes i_0} \cong \Phi^* (\sO_{\PP^N}(1)|_Z)$.
Assume in addition the following three conditions. 
\begin{enumerate}
\item [$\textup{(i)}$] $H^1(G, \Omega_{G}^{1} \otimes \mathcal{L}^{\otimes (-p^e)})=0$ for all sufficiently large $e$. 
\item[$\textup{(ii)}$] $H^{2}(G, \mathcal{L}^{\otimes (\pm p^e)})=0$ for every integer $e \ge 1$. 
\item[$\textup{(iii)}$] $H^3(G, \mathcal{L}^{\otimes (p-i_0)p^e})=0$ for every integer $e \ge 0$. 
\end{enumerate}
Then $H^1(G, \Omega_{G}^{1} \otimes \mathcal{L}^{-1})=0$. 
\end{prop}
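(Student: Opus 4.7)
The plan is to follow the proof of Proposition~\ref{vanishing (1,1)} almost verbatim, exploiting the fact that $G$ is smooth (so $\Omega_G^1$ is locally free and no reflexive hulls are needed) and observing that every ingredient---the three Cartier short exact sequences, Serre duality, the first fundamental exact sequence of K\"{a}hler differentials, and the pullback of the Euler sequence---is \'{e}tale-local in nature and therefore transports from schemes to the small \'{e}tale site of a smooth proper algebraic space.

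First, by descending induction on $e$ I reduce to the case $H^1(G, \Omega_G^1 \otimes \mathcal{L}^{\otimes(-p)}) = 0$; the inductive step replaces $\mathcal{L}$ by $\mathcal{L}^{\otimes p}$, under which (ii) and (iii) are clearly preserved, while the generic-\'{e}taleness hypothesis is preserved by composing $\Phi$ with the $p$-uple Veronese re-embedding of $Z$. Then the Cartier short exact sequences
\[
0 \to \sO_G \to F_*\sO_G \to B_G^1 \to 0, \qquad 0 \to B_G^1 \to Z_G^1 \to \Omega_G^1 \to 0, \qquad 0 \to Z_G^1 \to F_*\Omega_G^1 \to B_G^2 \to 0
\]
reduce the desired vanishing to $H^2(G, B_G^1 \otimes \mathcal{L}^{-1}) = 0$ together with $H^0(G, B_G^2 \otimes \mathcal{L}^{-1}) = 0$. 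The global $F$-splitness of $G$ makes $B_G^1$ a direct summand of $F_*\sO_G$, so the first follows from (ii); the inclusion $B_G^2 \hookrightarrow F_*\Omega_G^2$ reduces the second to $H^0(G, \Omega_G^2 \otimes \mathcal{L}^{\otimes(-p)}) = 0$.

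This last vanishing is the algebraic-space analogue of Proposition~\ref{vanishing (2,0)} and is the main step. Serre duality on the smooth proper $3$-dimensional algebraic space $G$ gives
\[
H^0(G, \Omega_G^2 \otimes \mathcal{L}^{\otimes(-p)}) \cong H^3(G, \Omega_G^1 \otimes \mathcal{L}^{\otimes p})^{\ast},
\]
so it suffices to show $H^3(G, \Omega_G^1 \otimes \mathcal{L}^{\otimes p}) = 0$. Pulling back the Euler sequence on $\PP^N$ along the composition $G \xrightarrow{\Phi} Z \xrightarrow{\iota} \PP^N$ and twisting by $\mathcal{L}^{\otimes(p - i_0)}$, conditions (ii) and (iii) force $H^3(G, (\iota\circ\Phi)^{\ast}\Omega_{\PP^N}(1) \otimes \mathcal{L}^{\otimes(p - i_0)}) = 0$. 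Feeding this into the first fundamental sequence
\[
(\iota\circ\Phi)^{\ast}\Omega_{\PP^N} \to \Omega_G \to \Omega_{G/\PP^N} \to 0
\]
twisted by $\mathcal{L}^{\otimes p}$, and using that generic \'{e}taleness of $\Phi$ (together with $\iota$ being a closed immersion, which forces $\Omega_{G/\PP^N} \cong \Omega_{G/Z}$) makes $\Omega_{G/\PP^N}$ supported on a proper closed subspace of $G$---whose $H^3$ therefore vanishes by the dimension bound on quasi-coherent cohomology---gives the required $H^3(G, \Omega_G^1 \otimes \mathcal{L}^{\otimes p}) = 0$.

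The main obstacle is essentially expository rather than genuinely new mathematics: one must confirm that the classical scheme-theoretic machinery---the Cartier isomorphism on the de Rham complex, Serre duality for a smooth proper algebraic space, the first fundamental sequence of relative K\"{a}hler differentials, and the vanishing of quasi-coherent cohomology above the dimension of the underlying topological space---transports to the small \'{e}tale site of $G$. Each item is either immediate from its \'{e}tale-local nature (since it then follows from the scheme case after passing to an \'{e}tale cover) or is recorded in the Stacks Project via the equivalence $\mathrm{Qcoh}(G) \simeq \mathrm{Qcoh}(\mathcal{S}_G)$ with the associated Deligne--Mumford stack reviewed in the preliminary section.
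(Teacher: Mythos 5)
Your proposal is correct and follows the same route as the paper: the paper's own proof of Proposition \ref{vanishing on stack} simply cites Serre duality for smooth proper algebraic spaces and the Cartier isomorphism in that setting, and then runs the argument of Proposition \ref{vanishing (1,1)} (including the descending induction via condition (i), the three Cartier exact sequences, and the Euler-sequence step from Proposition \ref{vanishing (2,0)}) verbatim. Your additional remarks on transporting the first fundamental sequence and the cohomological dimension bound to the small \'etale site are exactly the kind of routine verification the paper leaves implicit.
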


\begin{proof}
First note by \cite[Theorem 2.22]{Ni} that Serre duality holds for coherent sheaves on a smooth proper algebraic space over a perfect field.
Moreover, it follows from \cite[Corollary 3.3.17]{Ol2} that the Cartier isomorphism can be generalized to the context of algebraic spaces. 
Thus, the proof is essentially the same as that of Proposition \ref{vanishing (1,1)}.
\end{proof}

\begin{lem}\label{reflexive serre vanishing}
Let $X$ be a normal projective variety of dimension $n \ge 2$ and $\mathcal{L}$ be an ample line bundle on $X$. 
If $\mathcal{F}$ is a coherent reflexive $\sO_X$-module, then for all sufficiently large $m$, we have 
\[H^1(X, \mathcal{F} \otimes \mathcal{L}^{\otimes (-m)})=0.\] 
\end{lem}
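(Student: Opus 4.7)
The plan is to reduce the statement to the classical Enriques-Severi-Zariski vanishing for line bundles on $X$ by embedding $\mathcal{F}$ into a locally free sheaf built from powers of $\mathcal{L}$, in such a way that the cokernel is itself a subsheaf of a locally free sheaf (hence torsion-free). The reflexivity hypothesis $\mathcal{F} \cong \mathcal{F}^{\vee\vee}$ is precisely what enables this embedding.

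Concretely, using Serre's theorem on global generation, I would choose integers $N_0, N_1 \gg 0$ and construct a two-step locally free presentation
\[(\mathcal{L}^{\otimes(-N_1)})^{\oplus r_1} \longrightarrow (\mathcal{L}^{\otimes(-N_0)})^{\oplus r_0} \longrightarrow \mathcal{F}^\vee \longrightarrow 0\]
of $\mathcal{F}^\vee$; applying the left-exact functor $\mathcal{H}\mathrm{om}(-, \sO_X)$ and invoking the isomorphism $\mathcal{F}^{\vee\vee} \cong \mathcal{F}$, I obtain an exact sequence
\[0 \to \mathcal{F} \to (\mathcal{L}^{\otimes N_0})^{\oplus r_0} \to (\mathcal{L}^{\otimes N_1})^{\oplus r_1}.\]
Letting $\mathcal{C}$ denote the image of the second arrow yields a short exact sequence
\[0 \to \mathcal{F} \to (\mathcal{L}^{\otimes N_0})^{\oplus r_0} \to \mathcal{C} \to 0\]
in which $\mathcal{C}$ embeds into the locally free sheaf $(\mathcal{L}^{\otimes N_1})^{\oplus r_1}$.

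Twisting by $\mathcal{L}^{\otimes(-m)}$ and taking the long exact cohomology sequence sandwiches $H^1(X, \mathcal{F} \otimes \mathcal{L}^{\otimes(-m)})$ between $H^0(X, \mathcal{C} \otimes \mathcal{L}^{\otimes(-m)})$ and copies of $H^1(X, \mathcal{L}^{\otimes(N_0-m)})$. For $m$ large, the left term vanishes since $\mathcal{C} \otimes \mathcal{L}^{\otimes(-m)}$ injects into a direct sum of negative powers of $\mathcal{L}$ on the integral projective variety $X$, on which ample line bundles carry no nonzero global sections in any negative power. The right term vanishes for $m \gg 0$ by the classical Enriques-Severi-Zariski vanishing applied to a line bundle on the normal projective variety $X$ of dimension $\ge 2$, yielding the desired conclusion.

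The main obstacle, and the reason the reflexivity hypothesis is essential, is that without the isomorphism $\mathcal{F} \cong \mathcal{F}^{\vee\vee}$ the dualized resolution would only identify $\mathcal{F}^{\vee\vee}$ as its kernel, and the resulting cokernel could carry torsion, destroying the vanishing of $H^0(X, \mathcal{C} \otimes \mathcal{L}^{\otimes(-m)})$ needed to close the argument. Once this reduction is in place, the whole proof rests on the classical Enriques-Severi-Zariski vanishing for line bundles, which is the established input.
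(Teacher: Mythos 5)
Your proposal is correct and follows essentially the same route as the paper: present the dual $\mathcal{F}^{\vee}$ by locally free sheaves, dualize back, use reflexivity to identify the kernel with $\mathcal{F}$, and sandwich $H^1(X,\mathcal{F}\otimes\mathcal{L}^{\otimes(-m)})$ between the $H^0$ of a torsion-free cokernel (which vanishes for $m\gg 0$) and the $H^1$ of a locally free sheaf handled by Enriques--Severi--Zariski. The only difference is cosmetic: the paper uses a one-step surjection onto $\mathcal{F}^{*}$ and notes the cokernel is torsion-free as a subsheaf of $(\mathrm{Ker}\, f)^{*}$, while you take the presentation by sums of powers of $\mathcal{L}$, which makes the $H^0$ vanishing slightly more visible.
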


\begin{proof}

Take a surjection $f: \mathcal{E} \twoheadrightarrow \mathcal{F}^{*}$ from a locally free sheaf $\mathcal{E}$ of finite rank to the dual $\mathcal{F}^{*} : = \mathcal{H}\mathrm{om}(\mathcal{F}, \sO_X)$ of $\mathcal{F}$, 
and consider the exact sequence
\[
0 \to \mathcal{F}^{**} \xrightarrow{f^{*}} \mathcal{E}^{*} \to C \to 0,
\]
where $C$ is the cokernel of $f^{*}$.
Since $(\mathrm{Ker}\; f)^{*}$ is torsion free, so is the subsheaf $C \subseteq (\mathrm{Ker}\; f)^{*}$, which implies $H^0(X, C \otimes \mathcal{L}^{\otimes (-m)})=0$ for all sufficiently large $m$.
On the other hand, since $\mathcal{E}^{*}$ is locally free, it follows from the lemma of Enriques-Severi-Zariski \cite[III. Corollary 7.8]{Hart} that $H^1(X, \mathcal{E}^{*} \otimes \mathcal{L}^{\otimes (-m)})=0$ for all sufficiently large $m$.
Thus, by the above exact sequence, we obtain that 
\[
H^1(X, \mathcal{F} \otimes \mathcal{L}^{\otimes (-m)}) \cong H^1(X, \mathcal{F}^{**} \otimes \mathcal{L}^{\otimes (-m)}) =0
\] for all sufficiently large $m$.
\end{proof}

We now prove that a weak form of the Akizuki-Nakano vanishing theorem holds on globally $F$-regular 3-folds with isolated singularities. 
\begin{thm}\label{AN vanishing}
Let $X$ be a normal Cohen-Macaulay projective globally $F$-split 3-fold over a perfect field of characteristic $p>0$ and $\mathcal{H}$ be a globally generated ample line bundle on $X$ such that the morphism $\Phi_{\mathcal{H}}:X \to \mathbb{P}^{N}=\mathbb{P}(H^0(X,\mathcal{H})^*)$ induced by $\mathcal{H}$ is generically \'etale on its image. 
Suppose that $H^0(X, \omega_X)=0$ and $X$ has only isolated singularities. 
Then for all nonnegative integers $i,j$ with $i+j<3$, one has 
\[H^i(X, \Omega_X^{[j]} \otimes \mathcal{H}^{-1})=0.\]
\end{thm}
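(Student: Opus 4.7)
The strategy is to split the six vanishings indexed by $(i,j)$ with $i+j<3$ according to their difficulty. The cases $(0,0)$, $(1,0)$, $(2,0)$ are immediate: the first because $X$ is integral and proper with $\mathcal{H}^{-1}$ anti-ample, the other two via Kodaira vanishing on a Cohen-Macaulay globally $F$-split variety (Proposition \ref{kodaira}(2)). Only the three cases $(0,2)$, $(1,1)$, and $(0,1)$ involving positive-degree differential forms require genuine work.

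For case $(0,2)$, I apply Proposition \ref{vanishing (2,0)} with $n=3$, $\mathcal{F}=\mathcal{H}$, $\mathcal{G}=\mathcal{O}_X$. The required hypotheses $H^2(X,\mathcal{H})=0$ and $H^3(X,\mathcal{O}_X)=0$ follow from Proposition \ref{kodaira}(1) and from Serre duality applied to the standing assumption $H^0(X,\omega_X)=0$, respectively. For case $(1,1)$, I apply Proposition \ref{vanishing (1,1)} with $\mathcal{L}=\mathcal{H}$ and $i_0=1$, which is legitimate since $\mathcal{H}$ is globally generated with $\Phi_\mathcal{H}$ generically \'etale on its image. The three hypotheses there are verified via Lemma \ref{reflexive serre vanishing} applied to the reflexive sheaf $\Omega_X^{[1]}$, via both parts of Proposition \ref{kodaira}, and again via Proposition \ref{kodaira}(1) since $\mathcal{H}^{(p-1)p^e}$ is ample for every $e\ge 0$.

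The only genuinely new argument is for case $(0,1)$: showing $H^0(X,\Omega_X^{[1]}\otimes\mathcal{H}^{-1})=0$. Let $U\subseteq X$ be the smooth locus, with complement of codimension three. Reflexivity of $\Omega_X^{[1]}\otimes\mathcal{H}^{-1}$ gives $H^0(X,\Omega_X^{[1]}\otimes\mathcal{H}^{-1})=H^0(U,\Omega_U^1\otimes\mathcal{H}^{-1}|_U)$, and the Cartier isomorphism on $U$ identifies this with $H^0(U,(Z_U^1/B_U^1)\otimes\mathcal{H}^{-1}|_U)$. From the short exact sequence $0\to B_U^1\to Z_U^1\to Z_U^1/B_U^1\to 0$, the latter is a quotient of $H^0(U,Z_U^1\otimes\mathcal{H}^{-1}|_U)$ provided $H^1(U,B_U^1\otimes\mathcal{H}^{-1}|_U)=0$. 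For this I use the $F$-split decomposition $F_*\mathcal{O}_U=\mathcal{O}_U\oplus B_U^1$: tensoring with $\mathcal{H}^{-1}$ and applying the projection formula identifies $F_*(\mathcal{H}^{-p})|_U$ with $\mathcal{H}^{-1}|_U\oplus(B_U^1\otimes\mathcal{H}^{-1}|_U)$, so the vanishing reduces to $H^1(U,\mathcal{H}^{-p}|_U)=H^1(X,\mathcal{H}^{-p})=0$ (the identification using that $X$ is Cohen-Macaulay with singular locus of codimension three, the vanishing being Proposition \ref{kodaira}(2)). Next, $0\to Z_U^1\to F_*\Omega_U^1\to B_U^2\to 0$ yields an injection $H^0(U,Z_U^1\otimes\mathcal{H}^{-1}|_U)\hookrightarrow H^0(U,F_*\Omega_U^1\otimes\mathcal{H}^{-1}|_U)=H^0(X,\Omega_X^{[1]}\otimes\mathcal{H}^{-p})$ via the projection formula and reflexive extension.

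Iterating this argument with $\mathcal{H}$ replaced by $\mathcal{H}^{p^k}$ at stage $k$ exhibits $H^0(X,\Omega_X^{[1]}\otimes\mathcal{H}^{-1})$ as a subquotient of $H^0(X,\Omega_X^{[1]}\otimes\mathcal{H}^{-p^e})$ for every $e\ge 0$; for $e$ large the latter vanishes, since any nonzero map $\mathcal{H}^{p^e}\to\Omega_X^{[1]}$ would be an injection of a line bundle whose slope $p^e\deg\mathcal{H}$ exceeds the fixed maximal slope of $\Omega_X^{[1]}$. The main obstacle is precisely case $(0,1)$: the other vanishings reduce cleanly to the earlier propositions, but Serre duality merely dualizes $(0,1)$ into the equivalent top-degree statement $H^3(X,\Omega_X^{[2]}\otimes\mathcal{H})=0$, so one must descend to the smooth locus and exploit the Cartier operator and the $F$-splitting simultaneously, with the iteration controlled at each stage by Kodaira vanishing for the line bundles $\mathcal{H}^{-p^e}$.
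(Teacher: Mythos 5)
Your proof is correct and follows essentially the same route as the paper: the $j=0$ cases via Kodaira vanishing for Cohen--Macaulay globally $F$-split varieties (Proposition \ref{kodaira}), the $(0,2)$ case via Proposition \ref{vanishing (2,0)} with $\mathcal{F}=\mathcal{H}$ and $\mathcal{G}=\sO_X$, and the $(1,1)$ case via Proposition \ref{vanishing (1,1)} with $i_0=1$, with the hypotheses checked exactly as in the paper. For the $(0,1)$ case the paper only cites \cite[Proposition 4.6]{Kaw}, and your self-contained Frobenius-iteration argument --- splitting off $B^1_U$ from $F_*\sO_U$, passing from $Z^1_U$ into $F_*\Omega^1_U$, and killing $H^0(X,\Omega_X^{[1]}\otimes\mathcal{H}^{-p^e})$ for $e\gg 0$ by boundedness of degrees of rank-one subsheaves of $\Omega_X^{[1]}$ --- is the expected argument and is precisely the $H^0$-analogue of the paper's own proof of Proposition \ref{vanishing (1,1)}.
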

\begin{proof}
The case where $j=0$ is a special case of Proposition \ref{kodaira}. 
The case where $(i,j)=(0,1)$ follows from essentially the same argument as the proof of \cite[Proposition 4.6]{Kaw}. 
The case where $(i,j)=(0,2)$ is immediate from Propositions \ref{kodaira} and \ref{vanishing (2,0)} by substituting $\mathcal{F}=\mathcal{H}$ and $\mathcal{G}=\mathcal{O}_X$. 
Finally, the case where $(i,j)=(1,1)$ follows from Proposition \ref{vanishing (1,1)} by substituting $\mathcal{L}=\mathcal{H}$ and $i_0=1$.
Indeed, in this case, the condition (i) in Proposition \ref{vanishing (1,1)} is verified by Lemma \ref{reflexive serre vanishing} and the conditions (ii) and (iii) are done by Proposition \ref{kodaira}. 
\end{proof}

\begin{rem}\label{GFR case1}
In Theorem \ref{AN vanishing}, all the assumptions on $X$, except having only isolated singularities, are satisfied if $X$ is a projective globally $F$-regular 3-fold by Remark \ref{GFR are CM}. In other words, the Akizuki-Nakano vanishing holds for very ample line bundles on projective globally $F$-regular 3-folds with only isolated singularities. 
\end{rem}

Next, we discuss the Akizuki-Nakano vanishing theorem when the singular locus of the variety may have positive dimension. 
Hamm proved that the following form of the Akizuki-Nakano vanishing theorem holds on locally complete intersection projective varieties in characteristic zero. 

\begin{thm}[\textup{\cite[Theorem 0.1]{Hamm}}]\label{Hamm}
Let $X$ be a locally complete intersection complex projective variety and $\mathcal{H}$ be an ample line bundle on $X$.
Let $d$ denote the dimension of $X$ and $s$ denote the dimension of the singular locus of $X$.
Then for all nonnegative integers $i,j$ with $i+j<d-s$, one has
\[H^i(X, \Omega_X^{j} \otimes \mathcal{H}^{-1})=0.\]
\end{thm}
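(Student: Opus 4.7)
The plan is to argue by induction on $s = \dim \Sing X$, the base case $s < 0$ (so $X$ smooth) being Nakano's classical Akizuki--Nakano theorem over $\C$. For the inductive step, after replacing $\mathcal{H}$ by a sufficiently high power I would invoke a Bertini-type theorem for lci projective varieties to produce an effective divisor $D \in |\mathcal{H}|$ that is itself lci of dimension $d-1$, with $\mathcal{H}|_D$ ample and $\dim \Sing D \le s-1$ (the singular locus of $D$ is essentially $\Sing X \cap D$, which drops by one in dimension when $D$ is chosen transverse to $\Sing X$). The inductive hypothesis applied to $(D, \mathcal{H}|_D)$ then supplies
\[
H^i(D, \Omega_D^j \otimes \mathcal{H}^{-1}|_D) = 0 \quad \text{for } i + j < (d-1) - (s-1) = d-s,
\]
exactly the range required.

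To transfer the vanishing from $D$ back to $X$, the plan is to combine the restriction sequence
\[
0 \to \Omega_X^j \otimes \mathcal{H}^{-1} \to \Omega_X^j \to \Omega_X^j|_D \to 0
\]
(exact as long as $D$ avoids the associated primes of $\Omega_X^j$, achievable for generic $D$) with the conormal sequence
\[
0 \to \mathcal{H}^{-1}|_D \to \Omega_X^1|_D \to \Omega_D^1 \to 0,
\]
whose exterior powers give a filtration of $\Omega_X^j|_D$ with graded pieces of the form $\Omega_D^{j-k} \otimes \mathcal{H}^{-k}|_D$. Feeding the inductive vanishings on $D$ into the resulting long exact sequences, together with a Serre-type vanishing to dispose of the auxiliary positive twists of $\Omega_X^j$ introduced when replacing $\mathcal{H}$ by a power, should deliver the claim.

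The principal obstacle is that $\Omega_X^j$ fails to be locally free along $\Sing X$, so the displayed sequences are not a priori exact in a neighborhood of the singular locus, and the inductive bookkeeping has to absorb contributions supported on sets of dimension $\le s$. This is precisely why the range $i+j<d-s$ appears: local cohomology along a set of dimension $\le s$ contributes only in degrees $\ge d-s$, so below that threshold the failure of local freeness is invisible. An alternative route, close to Hamm's original analytic argument, is to exploit the fact that the complement $U = X \setminus D$ is Stein; the Lefschetz-type theorem for Stein spaces with lci singularities asserts $H^k(U, \C) = 0$ for $k > d+s$, and extracting Hodge components via Deligne's mixed Hodge theory on $U$ together with the long exact sequence of the pair $(X, U)$ should then yield the desired vanishing.
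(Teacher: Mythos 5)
The paper does not actually prove this statement: it is quoted as \cite[Theorem 0.1]{Hamm} and used as a black box, so there is no internal proof to compare against. Judged on its own terms, your sketch has a genuine gap at the step that is supposed to drive the induction. The long exact sequence attached to
\[
0 \to \Omega_X^j \otimes \mathcal{H}^{-1} \to \Omega_X^j \to \Omega_X^j|_D \to 0
\]
places $H^i(X, \Omega_X^{j} \otimes \mathcal{H}^{-1})$ between $H^{i-1}(D, \Omega_X^j|_D)$ and $H^i(X, \Omega_X^j)$; the latter is an untwisted Hodge-type group and is in general nonzero (already $H^0(X,\sO_X)\neq 0$ for $(i,j)=(0,0)$), so vanishing on $D$ only tells you that $H^i(X,\Omega_X^j\otimes\mathcal{H}^{-1})$ maps to $H^i(X,\Omega_X^j)$ with kernel the image of the connecting map --- not that it is zero. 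The way to close the loop is a descending induction on the twist: use the vanishing on $D$ in degrees $i-1$ and $i$ to prove $H^i(X, \Omega_X^j \otimes \mathcal{H}^{-n-1}) \cong H^i(X, \Omega_X^j \otimes \mathcal{H}^{-n})$ for all $n \ge 1$, and then kill the group for $n \gg 0$. That last step is the real content of the theorem and is exactly what your sketch omits: it requires the depth estimate $\depth_x \Omega^j_{X,x} \ge d-j$ at singular points of a local complete intersection (Hamm's Theorem 1.1, recorded in this paper as Lemma \ref{depth}) together with the Enriques--Severi--Zariski--Serre vanishing $H^i(X, \mathcal{F}\otimes\mathcal{H}^{-n})=0$ for $n\gg 0$ and $i<\min_x\depth\mathcal{F}_x$. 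Your remark that ``local cohomology along a set of dimension $\le s$ contributes only in degrees $\ge d-s$'' is precisely the assertion that needs this depth bound; it is false for an arbitrary coherent sheaf and cannot be taken for granted. An appeal to Serre vanishing for \emph{positive} twists does not substitute for it, since the problematic twists here are negative.

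There are also secondary points you would have to repair: identifying the kernel of $\Omega_X^j|_D \to \Omega_D^j$ with $\Omega_D^{j-1}\otimes\mathcal{H}^{-1}|_D$ needs the conormal sequence to be locally split (or a separate argument along $\Sing X \cap D$), and if Bertini forces you to take $D\in|\mathcal{H}^m|$ with $m>1$ the twist bookkeeping in your displayed restriction sequence is no longer the one you wrote. The alternative analytic route is too vague to assess: passing from the topological vanishing $H^k(X\setminus D,\C)=0$ in high degrees to coherent vanishing of $H^i(X,\Omega_X^j\otimes\mathcal{H}^{-1})$ on a \emph{singular} $X$ requires a Hodge-theoretic comparison for the (non-locally-free) sheaves $\Omega_X^j$ that you do not supply, and at best it treats very ample $\mathcal{H}$. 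For a template of the depth-plus-descending-twist argument that does work, compare the paper's proofs of Lemma \ref{reflexive serre vanishing} and Proposition \ref{vanishing D}, which are built on exactly the ingredients your sketch is missing.
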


We will prove a characteristic $p$ analog of Theorem \ref{Hamm} in dimension 3. 
We start with the following auxiliary lemma. 

\begin{lem}\label{Bogomolov isolated}
Let $Y$ be a projective 3-fold over a perfect field of characteristic $p>0$ and $\mathcal{L}$ be a line bundle on $Y$.
Suppose that $Y$ has only isolated Cohen-Macaulay singularities and $H^1(Y, \mathcal{L}^{\otimes (-n)})=0$ for all $n \ge 1$.
Then we have 
\[H^0(X, \Omega_Y^{[1]} \otimes \mathcal{L}^{-1})=0.\]
\end{lem}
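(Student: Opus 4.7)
\emph{Proof sketch.} Assume for contradiction that there is a nonzero section $\sigma \in H^0(Y, \Omega_Y^{[1]} \otimes \mathcal{L}^{-1})$, equivalently a nonzero $\sO_Y$-linear map $\sigma\colon \mathcal{L} \to \Omega_Y^{[1]}$. The plan is to run a Bogomolov--Sommese-type argument in positive characteristic by combining the Cartier isomorphism and Frobenius iteration with the Koszul filtration from Lemma \ref{Koszul}, and then to play the resulting cohomological data against the vanishing hypothesis.

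Let $\iota\colon U \hookrightarrow Y$ be the inclusion of the smooth locus. Because $Y$ is three-dimensional, Cohen--Macaulay and has only isolated singularities, local-cohomology depth considerations force $R^1\iota_*\sO_U = 0$, so the Cartier-type short exact sequence $0 \to \sO_U \to F_*\sO_U \to B_U^1 \to 0$ pushes forward to an exact sequence
\[
0 \to \sO_Y \to F_*\sO_Y \to B_Y^{[1]} \to 0
\]
on $Y$ (and similarly with $F^e$ replacing $F$ for each $e\ge 1$). Twisting by $\mathcal{L}^{-1}$ and applying the hypotheses $H^1(Y,\mathcal{L}^{-1})=H^1(Y,\mathcal{L}^{-p^e})=0$ gives $H^0(Y, B_{Y,e}^{[1]}\otimes\mathcal{L}^{-1}) \cong H^0(Y,\mathcal{L}^{-p^e})/H^0(Y,\mathcal{L}^{-1})$, providing control over the Frobenius-iterated Cartier sequences.

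On the smooth locus, $\sigma|_U\colon \mathcal{L}|_U \hookrightarrow \Omega_U^1$ is an inclusion into a locally free sheaf with torsion-free cokernel $\mathcal{Q}$. Applying Lemma \ref{Koszul} to $0\to \mathcal{L}|_U\to \Omega_U^1\to\mathcal{Q}\to 0$ and iterating yields, for every $n\ge 1$, a filtration of $S^n(\Omega_U^1)$ with graded pieces $S^{n-k}(\mathcal{Q})\otimes \mathcal{L}^{\otimes k}|_U$; the bottom piece $\mathcal{L}^{\otimes n}|_U$ is embedded by $\sigma^n$, and pushing forward along $\iota$ produces a nonzero map $\mathcal{L}^{\otimes n}\to (S^n\Omega_Y^{[1]})^{**}$ for every $n$. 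Combining this symmetric-power propagation with the Frobenius--Cartier sequences above and tracing $\sigma$ through the resulting diagram---using $H^1(Y,\mathcal{L}^{-n})=0$ for \emph{all} $n\ge 1$ to kill the connecting maps that appear at each stage---forces $\sigma = 0$, the desired contradiction.

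\medskip

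\noindent\emph{Main obstacle.} The chief technical difficulty is handling the Cartier and Koszul sequences at the isolated singularities of $Y$. Although $R^1\iota_*\sO_U = 0$ by Cohen--Macaulayness, other higher direct images such as $R^1\iota_*B_U^1$ need not vanish, so pushforwards from $U$ typically acquire skyscraper cokernels supported at the singular points. Verifying that these error terms do not obstruct the diagram chase---and pinning down exactly which twists $H^1(Y,\mathcal{L}^{-n})=0$ must be invoked for---is where both the Cohen--Macaulay and isolated-singularity hypotheses are essential, and is the step that requires the hypothesis to hold for every $n\ge 1$ rather than merely for $n$ a power of $p$.
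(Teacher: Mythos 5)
There is a genuine gap here. The paper's own ``proof'' is a one-line deferral to \cite[Theorem 4.5]{Kaw}, and the argument there (like the ones written out in Propositions \ref{vanishing (2,0)} and \ref{vanishing (1,1)} of this paper) is carried entirely by the three exact sequences $0 \to \sO \to F_*\sO \to B^1 \to 0$, $0 \to B^1 \to Z^1 \xrightarrow{C} \Omega^1 \to 0$, $0 \to Z^1 \to F_*\Omega^1 \to B^2 \to 0$ on the smooth locus, pushed down to $Y$ using the depth bounds coming from the Cohen--Macaulay isolated-singularity hypothesis; no symmetric powers appear anywhere. Your sketch gets the setting right (the contradiction setup, $R^1\iota_*\sO_U=0$, exactness of $0 \to \sO_Y \to F_*\sO_Y \to B_Y^{[1]} \to 0$), but the step that is supposed to produce the contradiction --- ``tracing $\sigma$ through the resulting diagram \ldots forces $\sigma=0$'' --- is an assertion with no mechanism behind it. Moreover, the one group you do compute, $H^0(Y, B^{[1]}_{Y,e}\otimes\mathcal{L}^{-1})$, is not the group that matters: the obstruction to lifting $\sigma:\mathcal{L}\to \Omega^{[1]}_Y \cong$ (Cartier) $Z^{[1]}_Y/B^{[1]}_Y$ to a map $\mathcal{L}\to Z^{[1]}_Y\subseteq F_*\Omega^{[1]}_Y$ (equivalently, to a nonzero section of $\Omega^{[1]}_Y\otimes\mathcal{L}^{-p}$) lives in $H^1(B^{[1]}_Y\otimes\mathcal{L}^{-1})$, which the sequence $0\to\mathcal{L}^{-1}\to F_*\mathcal{L}^{-p}\to B^{[1]}_Y\otimes\mathcal{L}^{-1}\to 0$ only maps into $H^2(Y,\mathcal{L}^{-1})$ --- a group your hypotheses say nothing about. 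Handling exactly this obstruction (and the skyscraper corrections at the singular points) is the content of Kawakami's proof, and it is precisely the part your sketch postpones to the ``main obstacle'' paragraph.

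The symmetric-power/Koszul component of your plan does not work as described and should be dropped. First, Lemma \ref{Koszul} requires all three terms of the short exact sequence to be locally free, whereas the cokernel $\mathcal{Q}$ of $\sigma|_U:\mathcal{L}|_U\to\Omega^1_U$ is not locally free (and in general not even torsion free: $\sigma$ may vanish along a divisor, so one must first saturate $\mathcal{L}$ inside $\Omega^1_U$). Second, even granting the nonzero maps $\mathcal{L}^{\otimes n}\to (S^n\Omega^{[1]}_Y)^{**}$, nothing in the hypotheses contradicts their existence: $\mathcal{L}$ is an arbitrary line bundle with $H^1(Y,\mathcal{L}^{-n})=0$, not a big one, and the assumptions give no control over sub-line-bundles of symmetric powers of $\Omega^{[1]}_Y$. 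Bogomolov's symmetric-differential argument is a characteristic-zero technique whose conclusion is false in characteristic $p$ without Frobenius input, and the Cartier isomorphism lives on the de Rham complex (exterior powers), not on symmetric powers, so there is no diagram in which the two halves of your plan actually meet. The correct route is the one the paper uses elsewhere: stay with $B^1$, $Z^1$, $B^2$, use the hypothesis $H^1(Y,\mathcal{L}^{-n})=0$ for the Frobenius twists $n=p^e$, and use the Cohen--Macaulay isolated-singularity assumption to identify $H^i(U,-)$ with $H^i(Y,-)$ in the range $i\le 1$.
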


\begin{proof}
The proof is similar to that of \cite[Theorem 4.5]{Kaw}.
\end{proof}

\begin{prop}\label{Bogomolov}
Let $X$ be a normal projective 3-fold over a perfect field of characteristic $p \ge 5$ and $\mathcal{H}$ be an ample line bundle on $X$.
Suppose that there exists an effective $\Q$-Weil divisor $\Delta$ such that $K_X+\Delta$ is $\Q$-Cartier and $(X, \Delta)$ is strongly $F$-regular and globally $F$-split.
Then we have 
\[H^0(X, (\Omega_X)_{\mathrm{t.f.}} \otimes \mathcal{H}^{-1})=0,\]
where $(\Omega_X)_{\mathrm{t.f.}}$ is the torsion free part of the cotangent sheaf $\Omega_X$.
\end{prop}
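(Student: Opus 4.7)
The plan is to reduce to Lemma \ref{Bogomolov isolated} by passing to a resolution of singularities. Let $\pi\colon Y\to X$ be a resolution; such a $\pi$ exists for projective three-folds in positive characteristic by Cossart--Piltant. The key observation is the existence of a natural injection
\[
(\Omega_X)_{\mathrm{t.f.}}\hookrightarrow \pi_*\Omega_Y.
\]
Indeed, $\pi_*\Omega_Y$ is torsion-free (as $Y$ is integral and $\Omega_Y$ is locally free), so the adjoint of the pullback $\pi^*\Omega_X\to\Omega_Y$ gives a map $\Omega_X\to\pi_*\Omega_Y$ factoring through $(\Omega_X)_{\mathrm{t.f.}}$. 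The resulting map is an isomorphism over the smooth locus $X^{\mathrm{sm}}$, where $\pi$ is an isomorphism and both sheaves coincide with $\Omega_{X^{\mathrm{sm}}}$; since source and target are torsion-free, a generic isomorphism is a global injection.

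Twisting by $\mathcal{H}^{-1}$ and applying the projection formula yield an injection
\[
H^0(X, (\Omega_X)_{\mathrm{t.f.}}\otimes \mathcal{H}^{-1}) \hookrightarrow H^0(Y, \Omega_Y\otimes \pi^*\mathcal{H}^{-1}),
\]
so it suffices to show the right-hand side vanishes. I then apply Lemma \ref{Bogomolov isolated} to the smooth $3$-fold $Y$ with line bundle $\mathcal{L}=\pi^*\mathcal{H}$: smoothness gives $\Omega_Y^{[1]}=\Omega_Y$ and makes the Cohen--Macaulay-with-isolated-singularities hypothesis vacuous, so the only remaining check is $H^1(Y,\mathcal{L}^{\otimes(-n)})=0$ for all $n\ge 1$.

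For this cohomological input I would use the standard fact that strong $F$-regularity of $(X,\Delta)$ forces $X$ itself to be locally strongly $F$-regular, hence $F$-rational; combined with the existence of a resolution this yields $R^i\pi_*\sO_Y=0$ for $i>0$. The projection formula then upgrades this to $R^i\pi_*(\pi^*\mathcal{H}^{\otimes(-n)})=0$ for $i>0$, and the Leray spectral sequence identifies $H^1(Y,\pi^*\mathcal{H}^{\otimes(-n)})$ with $H^1(X,\mathcal{H}^{\otimes(-n)})$. The latter vanishes by Proposition \ref{kodaira}(2), which applies because $X$ is Cohen--Macaulay (Remark \ref{GFR are CM}, applied to each member of the open cover witnessing strong $F$-regularity) and globally $F$-split (since $(X,\Delta)$ is globally sharply $F$-split, the splitting of $\sO_X\to F^e_*\sO_X(\lceil(p^e-1)\Delta\rceil)$ restricts along the inclusion $F^e_*\sO_X\hookrightarrow F^e_*\sO_X(\lceil(p^e-1)\Delta\rceil)$ to a splitting of the Frobenius $\sO_X\to F^e_*\sO_X$). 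The most delicate ingredients in the plan are thus resolution of singularities for three-folds in characteristic $p\ge 5$ and the implication from strong $F$-regularity to rationality of singularities; both are standard in the literature.
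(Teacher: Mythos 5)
Your proof is correct in outline and follows the same skeleton as the paper's, but it substitutes one key input for another, and the difference is worth spelling out. The paper does not pass to a resolution: it takes a \emph{terminal model} $f\colon Y\to X$ of the klt pair $(X,\Delta)$, which exists by the three-fold MMP in characteristic $p\ge 5$ (Birkar, Hacon--Witaszek) --- this is precisely where the hypothesis $p\ge 5$ is used. Because $\Delta_Y=f^*(K_X+\Delta)-K_Y$ is effective for a terminal model, Lemmas \ref{crepant} and \ref{crepant SFR} apply and give a normal, Cohen--Macaulay $Y$ with only isolated singularities that is moreover globally $F$-split; the rest of the argument (pseudo-rationality of $X$ plus \cite[Theorem 1.4]{Kov} to kill $R^{>0}f_*\sO_Y$, Leray plus Proposition \ref{kodaira}(2) to get $H^1(Y,f^*\mathcal{H}^{\otimes(-n)})=0$, Lemma \ref{Bogomolov isolated} on $Y$, and the injection $(\Omega_X)_{\mathrm{t.f.}}\hookrightarrow f_*\Omega_Y^{[1]}$) is identical to yours. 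What your route buys is independence from the MMP and, apparently, from the hypothesis $p\ge 5$, since Cossart--Piltant resolution is available in every characteristic. What it costs is two things you should be explicit about. First, the step ``$X$ pseudo-rational and $Y$ Cohen--Macaulay implies $R^i\pi_*\sO_Y=0$'' is not a formal consequence of the definition (it is \emph{not} Grauert--Riemenschneider, which is open in positive characteristic); it is exactly \cite[Theorem 1.4]{Kov}, which applies equally to a smooth resolution, but you should cite it rather than call it standard. Second, a resolution of $(X,\Delta)$ is in general \emph{not} globally $F$-split, since the crepant pullback $\pi^*(K_X+\Delta)-K_Y$ need not be effective, whereas the terminal model is. Lemma \ref{Bogomolov isolated} as stated requires only Cohen--Macaulay isolated singularities and $H^1(Y,\mathcal{L}^{\otimes(-n)})=0$, so your application is legitimate on its face; but the paper's proof of that lemma is only sketched (``similar to \cite[Theorem 4.5]{Kaw}''), and the authors' care to record that their $Y$ is globally $F$-split, together with their retention of $p\ge5$, suggests you should verify that the omitted proof really does not use an $F$-splitting hypothesis before claiming the stronger statement for all $p$.
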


\begin{proof}
First note by Remark \ref{hierarchy} that the pair $(X, \Delta)$ is klt.
By \cite[Theorem 1.7]{Bir} and \cite{HaWi}, we can take a terminal model $f: Y \to X$ of the klt pair $(X, \Delta)$, that is, $f$ is a projective birational morphism from a normal projective variety $Y$ such that $\Delta_Y : = f^*(K_X+ \Delta)-K_Y$ is effective and $(Y, \Delta_Y)$ is terminal.
It then follows from Lemma \ref{crepant}, Lemma \ref{crepant SFR} and Remark \ref{hierarchy} that $Y$ is globally $F$-split and has only Cohen-Macaulay isolated singularities.

\begin{cl}
$R^if_* \sO_Y=0$ for all $i>0$. 
\end{cl}
\begin{proof}[Proof of Claim]
It suffices to show that $H^i(V, \sO_V)=0$, where $V:=f^{-1}(U)$, for every open affine subscheme $U \subseteq X$. 
Since $(U, \Delta|_U)$ is globally $F$-regular and $\Delta_Y$ is effective, Lemma \ref{crepant} tells us that $V $ is also globally $F$-regular.
Noting that $V$ is projective over $U$, we have an ample effective Cartier divisor $D$ on $V$ such that $H^i(V, \sO_V(D))=0$.
Combining this with the fact that $H^i(V,\sO_V)$ is a direct summand of $H^i(V, F^e_*\sO_V(D))$ for some $e>0$, we have the vanishing $H^i(V,\sO_V)=0$ as desired. 
\end{proof}

By the above claim and Proposition \ref{kodaira} (2),  
\[
H^1(Y, f^*\mathcal{H}^{\otimes (-n)}) \cong H^1(X, \mathcal{H}^{\otimes(-n)})=0
\]
 for all $n \ge 1$.
It, therefore, follows from Lemma \ref{Bogomolov isolated} that 
\[H^0(X, f_*\Omega_Y^{[1]} \otimes \mathcal{H}^{-1})=H^0(Y, \Omega_Y^{[1]} \otimes f^*\mathcal{H}^{-1})=0.\]
Since $f_* \Omega_Y^{[1]}$ is torsion free and there exist natural morphisms
\[
\Omega_X \to f_* \Omega_Y \to f_* \Omega_Y^{[1]},
\]
which are generically isomorphic, we obtain an inclusion $(\Omega_X)_{\mathrm{t.f.}} \hookrightarrow f_* \Omega_Y^{[1]}$.
Then
\[
H^0(X, (\Omega_X)_{\mathrm{t.f.}} \otimes \mathcal{H}^{-1}) \subseteq H^0(X, f_*\Omega_Y^{[1]} \otimes \mathcal{H}^{-1})=0,\]
which completes the proof.
\end{proof}

In order to prove a vanishing theorem involving cotangent bundles, we use the following lemma on their depths.

\begin{lem}[\textup{\cite[Theorem 1.1]{Hamm}}]\label{depth}
Let $(R, \m)$ be a complete intersection local ring essentially of finite type over a perfect field $k$.
Let $n$ denote the dimension of $R$ and $\delta$ denote the dimension of the singular locus of $\Spec R$.
Then for every integer $0 \le i \le n-\delta$, we have $\mathrm{depth}_{\m} (\Omega_{R/k}^i) \ge n-i$. 
\end{lem}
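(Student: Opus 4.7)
The plan is to reduce the problem to a local complete-intersection presentation and then analyze $\Omega^i_{R/k}$ through a filtration on the wedge algebra that is cleanest on the regular locus. Since the claim is local, I would first write $R = S/I$, where $(S,\n)$ is a regular local ring essentially of finite type over $k$ of dimension $n+c$ and $I = (f_1,\dots,f_c)$ is generated by an $S$-regular sequence. The complete-intersection hypothesis produces the short exact conormal sequence
\[
0 \to I/I^2 \to \Omega_{S/k}\otimes_S R \to \Omega_{R/k} \to 0,
\]
in which $I/I^2 \cong R^c$ and $\Omega_{S/k}\otimes_S R \cong R^{n+c}$ are free, hence have depth $n$ (as $R$ is Cohen–Macaulay of dimension $n$). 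On the regular locus $U := \Spec R \setminus \Sing R$, whose complement has dimension $\delta$, the sheaf $\Omega_{R/k}$ is locally free of rank $n$, and the conormal sequence splits locally there.

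I would then induct on $i$. The case $i=0$ is immediate: $\Omega^0_{R/k}=R$ has depth $n$. For the inductive step, let $F_j \subseteq \bigwedge^i(\Omega_{S/k}\otimes R)$ denote the image of the natural wedge-product map
\[
\bigwedge^j(I/I^2)\otimes \bigwedge^{i-j}(\Omega_{S/k}\otimes R) \;\longrightarrow\; \bigwedge^i(\Omega_{S/k}\otimes R).
\]
This yields a finite decreasing filtration $F_0 \supseteq F_1 \supseteq \cdots$ with $F_0/F_1 \cong \Omega^i_{R/k}$. On $U$, where the conormal sequence splits, each quotient $F_j/F_{j+1}$ is isomorphic to $\bigwedge^j(I/I^2)\otimes \Omega^{i-j}_{R/k}$. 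Globally the two may differ by a coherent module supported on $\Sing R$.

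Assuming for the moment that the identification $F_j/F_{j+1}\cong \bigwedge^j(I/I^2)\otimes \Omega^{i-j}_{R/k}$ is good enough globally, the depth chase is short. Starting from the bottom $F_{\min(i,c)} = \bigwedge^{\min(i,c)}(I/I^2)$, which is free of depth $n$, I walk up through the short exact sequences $0\to F_{j+1}\to F_j\to F_j/F_{j+1}\to 0$. The inductive hypothesis, applied to $i - j \le i - 1 \le n - \delta$, gives $\mathrm{depth}_{\m}(F_j/F_{j+1})\ge n-(i-j)\ge n-i+1$ for every $j\ge 1$, so the standard depth inequalities propagate $\mathrm{depth}_{\m}(F_j)\ge n-i+1$ for $1 \le j \le \min(i,c)$. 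At the top, $0 \to F_1 \to F_0 \to \Omega^i_{R/k} \to 0$ with $\mathrm{depth}_{\m}(F_0)=n$ and $\mathrm{depth}_{\m}(F_1)\ge n-i+1$ gives $\mathrm{depth}_{\m}(\Omega^i_{R/k})\ge n-i$, as required.

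The principal obstacle, and what I expect to take the most care, is justifying the parenthetical ``good enough globally'' above. The discrepancy between $F_j/F_{j+1}$ and $\bigwedge^j(I/I^2)\otimes \Omega^{i-j}_{R/k}$ is a coherent module supported on $\Sing R$, which has dimension $\le \delta$; such modules satisfy $H^k_{\m}(-)=0$ for $k < n - \delta$, so they cannot obstruct any of the depth bounds we invoke in the range $k \le n-i-1 \le n-\delta-1$. This is precisely where the numerical hypothesis $i \le n-\delta$ is used. Making this bookkeeping rigorous—perhaps by replacing the filtration argument by the Koszul-type complex of Lemma \ref{Koszul} or by running the local-cohomology spectral sequence of the filtered object against $\Gamma_{\m}$—is the technical heart of the proof.
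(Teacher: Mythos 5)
First, a point of comparison: the paper does not prove this lemma at all --- it is quoted directly from Hamm's Theorem~1.1 --- so your proposal is not competing with an argument in the text. Your overall architecture (a complete intersection presentation $R=S/I$, the left-exact conormal sequence, the standard filtration of $\bigwedge^i(\Omega_{S/k}\otimes R)$ by the images $F_j$ of $\bigwedge^j(I/I^2)\otimes\bigwedge^{i-j}(\Omega_{S/k}\otimes R)$, induction on $i$, and a depth chase through the resulting short exact sequences) is indeed the shape of Hamm's and Lebelt's arguments, and the identification $F_0/F_1\cong\Omega^i_{R/k}$ is correct unconditionally by right-exactness of exterior powers.

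The genuine gap is in your final paragraph, where you dispose of the discrepancy between $F_j/F_{j+1}$ and $\bigwedge^j(I/I^2)\otimes\Omega^{i-j}_{R/k}$ for $j\ge 1$. The vanishing you invoke is false: a coherent module $K$ supported on a closed set of dimension $\le\delta$ satisfies $H^k_{\m}(K)=0$ for $k>\delta$ (Grothendieck vanishing), not for $k<n-\delta$; the residue field already violates the latter. Moreover, the degrees your argument needs are exactly the low ones. Writing $0\to K_j\to\bigwedge^j(I/I^2)\otimes\Omega^{i-j}_{R/k}\to F_j/F_{j+1}\to 0$, the chase requires $H^k_{\m}(F_j/F_{j+1})=0$ for all $k<n-i+j$, hence (after the inductive vanishing for the middle term) $H^{k+1}_{\m}(K_j)=0$ for all $k+1\le n-i+j$. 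Since $i\le n-\delta$ and $j\ge 1$ give $n-i+j\ge\delta+1$, while any nonzero $K_j$ has $\depth_{\m}K_j\le\dim K_j\le\delta$ and therefore nonvanishing local cohomology in some degree $\le\delta$, the required vanishing forces $K_j=0$. In other words, the argument tolerates no discrepancy at all: one must actually prove that the surjections $\bigwedge^j(I/I^2)\otimes\Omega^{i-j}_{R/k}\twoheadrightarrow F_j/F_{j+1}$ are isomorphisms, equivalently that the associated Koszul/Lebelt-type complex resolving $\Omega^i_{R/k}$ is exact and not merely exact on the regular locus. That exactness is the real content of the theorem; it is established by a separate induction that feeds the already-proved depth bounds for $\Omega^{i'}_{R/k}$, $i'<i$, into an acyclicity criterion for the complex, in the spirit of the $B^i=Z^i$ and $(S_2)$ bookkeeping carried out in Proposition~\ref{vanishing D} of this paper. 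As written, your proposal assumes the conclusion of that step rather than proving it.
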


\begin{thm}\label{AN vanishing non-isolated}
Let $X$ be a locally complete intersection projective 3-fold over a perfect field of characteristic $p \ge 5$ and $\mathcal{H}$ be an ample line bundle on $X$.
Suppose that $X$ is strongly $F$-regular (and in particular normal) and globally $F$-split.
Then for all nonnegative integers $i,j$ with $i+j<2$, one has 
\[H^i(X, \Omega_X^{j} \otimes \mathcal{H}^{-1})=0.\]
\end{thm}

\begin{proof}
The case where $j=0$ is immediate from Proposition \ref{kodaira}.
Since $X$ is a locally complete intersection, $\Omega_X$ is torsion free by \cite[Proposition 8.1]{Lip} (or by Lemma \ref{depth}).
Therefore, the case where $(i,j)=(0,1)$ follows from Proposition \ref{Bogomolov}.
\end{proof}

The following is a characteristic $p$ analog of Theorem \ref{Hamm} for locally complete intersection globally $F$-regular 3-folds. 
\begin{cor}\label{GFR case2}
Let $X$ be a locally complete intersection globally $F$-regular projective 3-fold over a perfect field of characteristic $p>0$. 
Let $\mathcal{H}$ be a globally generated ample line bundle on $X$ such that the morphism $\Phi_{\mathcal{H}}:X \to \mathbb{P}^{N}=\mathbb{P}(H^0(X,\mathcal{H})^*)$ induced by $\mathcal{H}$ is generically \'etale on its image. 
Let $s$ denote the dimension of the singular locus of $X$, and suppose either $s=0$ or $p \ge 5$.
Then for all nonnegative integers $i,j$ with $i+j<3-s$, one has 
\[H^i(X, \Omega_X^{j} \otimes \mathcal{H}^{-1})=0.\]
\end{cor}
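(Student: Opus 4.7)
The strategy is to split by the singular locus dimension $s$ and to reduce everything to the results already proved in this section. Throughout I will use that global $F$-regularity implies strong $F$-regularity, global $F$-splitness, normality, and Cohen-Macaulayness (Remark~\ref{GFR are CM}).

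\textbf{The case $s\ge 1$ (so $p\ge 5$).} The required range collapses to $i+j<3-s\le 2$. All hypotheses of Theorem~\ref{AN vanishing non-isolated} are then satisfied (lci projective 3-fold, characteristic $p\ge 5$, strongly $F$-regular and globally $F$-split, $\mathcal{H}$ ample), and its conclusion gives $H^i(X,\Omega_X^j\otimes\mathcal{H}^{-1})=0$ for every $i+j<2$, which is exactly what is needed. When $s=2$ only $(i,j)=(0,0)$ remains, and $H^0(X,\mathcal{H}^{-1})=0$ is immediate from ampleness.

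\textbf{The case $s=0$.} Now $X$ is an lci 3-fold with only isolated singularities. By Remark~\ref{GFR case1}, Theorem~\ref{AN vanishing} applies and yields $H^i(X,\Omega_X^{[j]}\otimes\mathcal{H}^{-1})=0$ for every $i+j<3$, and I must transfer this from the reflexive hull $\Omega_X^{[j]}$ to $\Omega_X^j$ for $j\le 2$. The cases $j=0$ are just Kodaira (Proposition~\ref{kodaira}). For $j=1$, I plan to show $\Omega_X\cong\Omega_X^{[1]}$: the conormal sequence for an lci point presents $\Omega_X$ locally as the cokernel of a map between two locally free sheaves, so $\Omega_X$ has projective dimension at most one and therefore $\mathrm{depth}_x\Omega_X\ge\mathrm{depth}_x\sO_X-1=2$ at every point. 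Hence $\Omega_X$ is $S_2$ and already reflexive, which combined with Theorem~\ref{AN vanishing} handles $(i,j)=(0,1)$ and $(1,1)$. For $(i,j)=(0,2)$, the only remaining case, I will use that $\Omega_X^2=\bigwedge^2\Omega_X$ is torsion-free for lci $X$ (a standard consequence of the Koszul/divided-power resolution associated with the conormal sequence, cf.\ Proposition~\ref{divided cpx} and Lemma~\ref{Koszul}). The resulting canonical injection $\Omega_X^2\hookrightarrow\Omega_X^{[2]}$ then forces
\[
H^0(X,\Omega_X^2\otimes\mathcal{H}^{-1})\hookrightarrow H^0(X,\Omega_X^{[2]}\otimes\mathcal{H}^{-1})=0.
\]

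The only substantive content beyond the cited vanishing theorems is the lci structural input used in the $s=0$ case, namely the reflexivity of $\Omega_X$ and the torsion-freeness of $\Omega_X^2$. I expect these to be the main technical obstacle, but they are really standard depth/projective-dimension estimates for locally complete intersections.
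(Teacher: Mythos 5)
Your proof is correct and follows essentially the same route as the paper: the case $s\ge 1$ is exactly Theorem~\ref{AN vanishing non-isolated} (note $s\le 1$ automatically since $X$ is normal, so your $s=2$ discussion is vacuous), and the case $s=0$ is Remark~\ref{GFR case1} combined with the reflexivity of $\Omega_X$ and torsion-freeness of $\Omega_X^2$. The only cosmetic difference is that the paper obtains those two structural facts in one stroke from Lemma~\ref{depth} (Hamm's depth estimate $\operatorname{depth}_{\m}\Omega^i_{R/k}\ge n-i$ for lci rings), whereas you re-derive them via Auslander--Buchsbaum and the conormal/Koszul resolution; both arguments are standard and equivalent in substance.
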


\begin{proof}
We first consider the case where $s=0$.
In this case, $H^i(X, \Omega_X^{[j]}\otimes \mathcal{H}^{-1})=0$ for every $i,j \ge 0$ with $i+j <3$ by Remark \ref{GFR case1}.
On the other hand, it follows from Lemma \ref{depth} that $\Omega^j_X$ is reflexive if $j=0,1$ and is torsion free if $j=2$.
Therefore, we have $\Omega^j_X \cong \Omega^{[j]}_X$ for $j=0,1$ and $\Omega^j_X \subseteq \Omega^{[j]}_X$ for $j=2$, which shows $H^i(X, \Omega_X^{j} \otimes \mathcal{H}^{-1})=0$ for every $i,j \ge 0$ with $i+j <3$.

The case where $s=1$ immediately follows from Theorem \ref{AN vanishing non-isolated}.
\end{proof}

We conclude this section with Lefschetz hyperplane type results for globally $F$-split 3-folds deduced from Corollary \ref{GFR case2}.  
\begin{thm}\label{F-split Lefschetz} 
Let $X$ be a globally $F$-split projective 3-fold over a perfect field of characteristic $p>0$ and $H$ be a smooth globally $F$-split very ample Cartier divisor on $X$. 
Suppose that $H^0(X, \omega_X)=0$ and $X$ has only isolated complete intersection singularities. 
Then for all nonnegative integers $i, j$, the natural map
\[
H^i(X, \Omega_X^j) \to H^i(H, \Omega_H^j)
\]
is an isomorphism if $i+j<2$ and is injective if $i+j<3$. 
\end{thm}
\begin{proof}
First note that $\mathscr{T}\mathrm{or}_1(\Omega_X^j, \sO_H)=0$ for all $2 \ge j \ge 0$, because $H$ is a Cartier divisor and $\Omega_X^j$ is torsion free by Lemma \ref{depth}. 
Therefore, 
\[0 \to \Omega_X^j(-H) \to \Omega_X^j \to \Omega_X^j|_H \to 0\]
is exact for all $2 \ge j \ge 0$. 
By Theorem \ref{AN vanishing} or an argument similar to the proof of Corollary \ref{GFR case2},  
we have $H^i(X, \Omega_X^j(-H))=0$ for all nonnegative integers $i,j$ with $i+j<3$. 
Thus, the map $H^i(X, \Omega_X^j) \to H^i(X, \Omega_X^j|_H)$ is an isomorphism if $i+j<2$ and is injective if $i+j<3$. 

Again, since $H$ is a smooth Cartier divisor on $X$, we have the following exact sequence of locally free sheaves:
\[0 \to \sO_X(-H)|_H \to \Omega_X|_H \to \Omega_H \to 0.\]
For each integer $j \ge 1$, by \cite[II. Exc. 5.16 (d)]{Hart}, it induces the exact sequence 
\[0 \to \Omega_H^{j-1}(-H) \to \Omega_X^j|_H \to \Omega_H^j \to 0.\]
Applying the Akizuki-Nakano vanishing theorem for smooth globally $F$-split surfaces (see the first paragraph of \S \ref{vanishing section}), 
we have $H^i(H, \Omega_H^{j-1}(-H))=0$ for all nonnegative integers $i, j$ with $i+j<3$. 
Together with Proposition \ref{kodaira}, this shows that the map $H^i(X, \Omega_X^j|_H) \to H^i(H, \Omega_H^j)$ is an isomorphism if $i+j<2$ and is injective if $i+j<3$.  
Compositing this map with the above map, we obtain the assertion. 
\end{proof}

We say that a line bundle $\mathcal{L}$ on a variety $X$ is \textit{normally generated} if the multiplication map $H^0(X, \mathcal{L})^{\otimes n} \to H^0(X, \mathcal{L}^{\otimes n})$ is surjective for every integer $n \ge 1$. 
Normally generated ample line bundles are very ample. 
\begin{cor}\label{Fano Lefschetz}
Let $X$ be a globally $F$-split projective Fano 3-fold over an infinite perfect field of characteristic $p>0$ which has only isolated complete intersection singularities. 
Suppose that $-K_X$ is normally generated and $H$ is a general member of the anti-canonical system $|-K_X|$. 
Then for all nonnegative integers $i, j$, the natural map
\[
H^i(X, \Omega_X^j) \to H^i(H, \Omega_H^j)
\]
is an isomorphism if $i+j<2$ and is injective if $i+j<3$. 
\end{cor}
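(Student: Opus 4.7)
The plan is to reduce the corollary to Theorem \ref{F-split Lefschetz} by verifying, for a general $H \in |-K_X|$, that (a) $H^0(X,\omega_X)=0$, (b) $H$ is a smooth very ample Cartier divisor on $X$, and (c) $H$ is globally $F$-split; the remaining hypothesis of Theorem \ref{F-split Lefschetz}, that $X$ has only isolated complete intersection singularities, is part of our assumption.

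Condition (a) is immediate from Proposition \ref{kodaira}\,(2) applied to the ample line bundle $\sO_X(-K_X)$ on the Cohen-Macaulay (since lci) globally $F$-split variety $X$, giving $H^i(X,\omega_X)=0$ for all $i<3$. For (b), since $-K_X$ is normally generated it is in particular very ample, so $|-K_X|$ realises $X$ as a closed subvariety of some $\PP^N_k$ and its members are the hyperplane sections of $X$. Because $\Sing(X)$ is a finite set of points and the ground field is infinite, a general hyperplane avoids $\Sing(X)$, and the classical Bertini theorem applied on the smooth locus shows that a general such $H$ is smooth; any such $H$ is automatically a very ample Cartier divisor on $X$ contained in the smooth locus.

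Condition (c) is the substantive step. The plan is to use the correspondence between Frobenius splittings of $X$ and ``non-degenerate'' sections of $\sO_X((p-1)(-K_X))$ coming from Grothendieck duality: each splitting $\phi$ defines an effective $\Q$-divisor $\Delta_\phi:=\frac{1}{p-1}\Div(\phi)$ with $\Delta_\phi\sim_\Q -K_X$ and $(X,\Delta_\phi)$ globally sharply $F$-split. The goal is to show that for a general $H\in|-K_X|$ one can choose $\phi$ so that $\Delta_\phi\geq H$; restriction/adjunction of sharp $F$-splittings then makes $(H,0)$ globally sharply $F$-split, and hence $H$ globally $F$-split (we use here that $H$ is smooth and disjoint from $\Sing(X)$, so the different vanishes). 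The existence of such $\phi$ is a dimension-count: because $-K_X$ is normally generated, the multiplication map $H^0(X,-K_X)\otimes H^0(X,(p-2)(-K_X))\to H^0(X,(p-1)(-K_X))$ is surjective, so for a general $s\in H^0(X,-K_X)$ with $H=\Div(s)$, the subspace $s\cdot H^0(X,(p-2)(-K_X))\subseteq H^0(X,(p-1)(-K_X))$ meets the non-empty open locus of Frobenius splittings, producing a splitting $\phi$ divisible by $s$, i.e.\ $\Delta_\phi\geq H$.

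With (a)--(c) in hand, Theorem \ref{F-split Lefschetz} applies directly to the smooth globally $F$-split very ample Cartier divisor $H$ on $X$ and yields the claimed isomorphism for $i+j<2$ and injectivity for $i+j<3$. The main obstacle is establishing (c): one has to verify carefully that the locus of $H\in|-K_X|$ for which $(X,H)$ is globally sharply $F$-split is both non-empty and open (or at least generic), and that globally sharp $F$-splitness of $(X,H)$ descends to $H$; this is precisely where the strength of the assumption that $-K_X$ is normally generated (rather than merely very ample) is used.
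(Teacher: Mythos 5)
Your reduction to Theorem \ref{F-split Lefschetz} is the same as the paper's: check $H^0(X,\omega_X)=0$ (automatic for a Fano), get smoothness of a general $H$ from Bertini, establish that $H$ is globally $F$-split, and quote the theorem. Steps (a) and (b) are fine. The difference is entirely in step (c): the paper treats it as a black box, citing an unpublished result of N.~Hara or \cite[Theorem 5.8]{DN} applied to the anticanonical section ring $\bigoplus_{n\ge 0}H^0(X,\sO_X(-nK_X))$ (this is exactly where normal generation is used), whereas you attempt a direct proof --- and your direct proof has a genuine gap.

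The gap is the inference ``$\sigma_\phi$ divisible by $s$, i.e.\ $\Delta_\phi\ge H$.'' A splitting $\phi$ corresponds to an effective divisor $\Div(\phi)\in|-(p-1)K_X|$ and $\Delta_\phi=\frac{1}{p-1}\Div(\phi)$; if the splitting section is merely divisible by $s$ (where $H=\Div(s)$), you only get $\Div(\phi)\ge H$, hence $\Delta_\phi\ge\frac{1}{p-1}H$, which is not $\Delta_\phi\ge H$ unless $p=2$. For $F$-adjunction to $H$ --- equivalently, for $\phi$ to be compatible with $H$ in the sense that $\phi(F_*\mathcal{I}_H)\subseteq\mathcal{I}_H$ and hence descends to a splitting of $\sO_H\to F_*\sO_H$ --- one needs $\Div(\phi)\ge(p-1)H$, i.e.\ divisibility of the splitting section by $s^{p-1}$. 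Your dimension count does not produce this: the relevant subspace is then the single line $k\cdot s^{p-1}\subseteq H^0(X,\sO_X(-(p-1)K_X))$, and surjectivity of $H^0(-K_X)\otimes H^0(-(p-2)K_X)\to H^0(-(p-1)K_X)$ says nothing about whether the splitting functional is nonzero on $s^{p-1}$ for general $s$. The statement is repairable --- since every multinomial coefficient $\binom{p-1}{\alpha}$ is a unit mod $p$, the powers $s^{p-1}$ span the image of $S^{p-1}H^0(-K_X)$, so normal generation does force the splitting functional to be nonzero on $s^{p-1}$ for general $s$ --- but this extra argument is precisely the content of the result the paper cites, and it is absent from, indeed contradicted by, the step as you wrote it.
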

\begin{proof}
Applying an unpublished result of N.~Hara or \cite[Theorem 5.8]{DN} to the anti-canonical ring $\bigoplus_{n \ge 0}H^0(X, \sO_X(-nK_X))$, we see that $H$ is globally $F$-split. 
Since $H$ is a general member of the very ample linear system $|-K_X|$ and $X$ has only isolated singularities, $H$ is smooth by Bertini's theorem. 
The assertion therefore follows from Theorem \ref{F-split Lefschetz}. 
\end{proof}

\section{Deformations of globally \texorpdfstring{$F$-split}{F-split} Fano 3-folds}
In this section, we study the deformations of globally $F$-split Fano 3-folds, making use of the results in \S\ref{vanishing section}. 
First we recall some basic terminology from the theory of deformations. 
\begin{defn}[cf.~\cite{NL}, \cite{Ser}, \cite{Vi}]
Let $X$ be an algebraic scheme over an algebraically closed field $k$.
\begin{enumerate}
\item[\textup{(i)}] 
Let $T$ be an algebraic scheme over $k$ and $t \in T$ be a closed point.
A \textit{deformation} of $X$ over $T$ with reference point $t$ is a pair $(\mathcal{X}/T, i)$ of a flat morphism $\mathcal{X} \to T$ of $k$-schemes and an isomorphism $i: X \xrightarrow{\ \sim\ } \mathcal{X} \times_T \Spec k(t)$ over $\Spec k$.
\item[\textup{(ii)}] 
Two deformations $(\mathcal{X},i)$ and $(\mathcal{X}', i')$ of $X$ over $T$ with the same reference point $t \in T$ are said to be \textit{equivalent} if there exists an isomorphism $\phi : \mathcal{X} \xrightarrow{\ \sim \ } \mathcal{X}'$ over $T$ such that the induced isomorphism $\phi_t : \mathcal{X} \times_T \Spec k(t) \to \mathcal{X}' \times_T \Spec k(t)$ satisfies the following commutative diagram.
\[
\xymatrix{
& \ar_-{i}[ld] X \ar^-{i'}[rd] &\\
\mathcal{X} \times_T \Spec k(t) \ar^-{\phi_t}[rr] && \mathcal{X}' \times_T \Spec k(t) 
}
\]
\item [\textup{(iii)}]
Let $(A, \m_A)$ be an Artinian local $k$-algebra with residue field $k$.
We define $\mathrm{Def}_X(A)$ as the set of equivalent classes of deformations of $X$ over $\Spec A$ with reference point $\m_A \in \Spec A$.
For a local homomorphism $f: A \to A'$ of Artinian local $k$-algebras with residue field $k$, we define
\[
\mathrm{Def}_X(f) : \mathrm{Def}_X(A) \to \mathrm{Def}_X(A')
\]
as the map that sends an equivalent class of a deformation $(\mathcal{X}, i)$ of $X$ over $\Spec A$ to the equivalent class of the deformation $(\mathcal{X}' : = \mathrm{X} \times_{\Spec A} \Spec{A'}, i')$, where $i'$ is the composition 
\[
i': X \xrightarrow{\ i \ } \mathcal{X} \times_{\Spec A} k \xrightarrow{\ \sim\ } \mathcal{X}' \times_{\Spec A'} k.
\]
\item [\textup{(iv)}]
We say that the deformations of $X$ are \textit{unobstructed} if for any Artinian local $k$-algebras $(A, \m)$ and $(A', \m')$ with residue field $k$ and for any surjective ring homomorphism $f : A \to A'$ with $\m \cdot \mathrm{Ker}\; f=0$, the map $\mathrm{Def}_X(f) : \mathrm{Def}_X(A) \to \mathrm{Def}_X(A')$ is surjective.
\item [\textup{(v)}]
We say that $X$ admits a \textit{smoothing} if there exist a smooth (not necessarily projective) curve $C$ over $k$, a closed point $c \in C$ and a deformation $\pi: \mathcal{X} \to C$ of $X$ over $C$ with reference point $c$ such that $\pi$ is smooth over $C \setminus \{c\}$. 
\end{enumerate}

\end{defn}

Namikawa \cite{Na} carefully examined the deformations of complex Fano 3-folds with Gorenstein terminal singularities. 

\begin{thm}[\textup{\cite[Proposition 3]{Na}}]\label{namikawa result1}
Let $X$ be a complex projective Fano 3-fold with Gorenstein terminal singularities. 
Then the deformations of $X$ are unobstructed. 
\end{thm}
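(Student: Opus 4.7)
The plan is to give a purely algebraic proof by reducing Namikawa's statement modulo a large prime $p$, deducing the vanishing of the obstruction space $H^2(T_X)$ in characteristic $p$ from the Akizuki-Nakano vanishing of Theorem \ref{AN vanishing}, and then descending back to characteristic zero by upper semicontinuity.

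First, I would spread $X$ out to a proper flat model $\pi \colon \mathcal{X} \to \mathrm{Spec}\, R$ over a finitely generated $\mathbb{Z}$-subalgebra $R \subset \mathbb{C}$ with $\mathcal{X} \otimes_R \mathbb{C} \cong X$. Since Gorenstein terminal $3$-fold singularities are compound Du Val, hence isolated hypersurface singularities, after shrinking $\mathrm{Spec}\, R$ every geometric fiber is a projective Fano $3$-fold with only isolated complete intersection singularities; combining characteristic-zero Kawamata-Viehweg with constructibility, I may also assume that $|-K_{\mathcal{X}_s}|$ is base point free with generically \'etale anticanonical morphism on every fiber. By \cite[Theorem 1.2]{SS}, for any closed point $s_0 \in \mathrm{Spec}\, R$ of residue characteristic $p \gg 0$, the fiber $X_0 := \mathcal{X}_{s_0}$ is globally $F$-split, and the hypotheses of Theorem \ref{AN vanishing} hold with $\mathcal{H} = \omega_{X_0}^{-1}$ (noting $H^0(X_0, \omega_{X_0}) = 0$ since $-K_{X_0}$ is ample).

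Next I would show $H^2(X_0, T_{X_0}) = 0$. Because $X_0$ is Gorenstein with isolated singularities, $\omega_{X_0}$ is a line bundle and there is a global identification $T_{X_0} \cong \Omega_{X_0}^{[2]} \otimes \omega_{X_0}^{-1}$ of reflexive sheaves. Serre duality on the Cohen-Macaulay variety $X_0$, combined with the local-to-global $\mathrm{Ext}$ spectral sequence, reduces $H^2(X_0, T_{X_0})$ to $H^1(X_0, \Omega_{X_0}^{[1]} \otimes \omega_{X_0})$ plus contributions from $\mathcal{E}\mathrm{xt}^1(T_{X_0}, \omega_{X_0})$ supported at the isolated singular points. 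The global term vanishes by Theorem \ref{AN vanishing} with $(i,j) = (1,1)$, and the local contributions are controlled by the smoothness of the versal deformation spaces of the isolated complete intersection singularities. Standard deformation theory then yields unobstructedness of deformations of $X_0$.

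To descend to characteristic zero I would apply upper semicontinuity: after further shrinking $\mathrm{Spec}\, R$ so that the isolated complete intersection singularity type is constant along the family, the relative tangent sheaf $T_{\mathcal{X}/\mathrm{Spec}\, R}$ is compatible with fiberwise base change, and Grauert's theorem yields
\[
\dim H^2(\mathcal{X}_{\bar{\eta}}, T_{\mathcal{X}_{\bar{\eta}}}) \;\le\; \dim H^2(X_0, T_{X_0}) \;=\; 0
\]
at the generic geometric fiber $\bar{\eta}$. Flat base change to $\mathbb{C}$ then gives $H^2(X, T_X) = 0$, and since the obstruction to lifting a deformation along a small extension lies in $H^2(X, T_X)$, deformations of $X$ are unobstructed. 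The main obstacle is the local-to-global $\mathrm{Ext}$ step: even though the global part $H^1(X_0, \Omega_{X_0}^{[1]} \otimes \omega_{X_0})$ is killed by Theorem \ref{AN vanishing}, one must still handle the local $\mathcal{E}\mathrm{xt}^1(T_{X_0}, \omega_{X_0})$ at each singular point, which is where the Gorenstein terminal (hence isolated complete intersection) structure of $X$ is crucial. A subsidiary technical point is ensuring the flat base-change compatibility of $T_{\mathcal{X}/\mathrm{Spec}\, R}$, which again relies on the constancy of singularity type along the shrunken base.
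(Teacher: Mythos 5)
Your overall strategy---spread out, reduce mod $p\gg 0$, invoke \cite[Theorem 1.2]{SS} to obtain a globally $F$-split Fano 3-fold $X_0$ with isolated complete intersection singularities, kill the obstruction space there by the Akizuki--Nakano-type vanishing, and descend by semicontinuity---is exactly the route the paper takes (Theorem \ref{unobstructed} together with the remark after Theorem \ref{smoothing}). The gap is in your characteristic-$p$ step. The obstruction space for deformations of the lci variety $X_0$ is $\mathrm{Ext}^2(\Omega_{X_0},\sO_{X_0})$ (\cite[Proposition 6.4 (c)]{Vi}), and the efficient move is to Serre-dualize \emph{this} group directly: on the Cohen--Macaulay 3-fold $X_0$ one has $\mathrm{Ext}^2(\Omega_{X_0},\sO_{X_0})^{*}\cong H^1(X_0,\Omega_{X_0}\otimes\omega_{X_0})$ with no local correction terms, and since $\Omega_{X_0}$ is reflexive (Lemma \ref{depth}) this is $H^1(X_0,\Omega^{[1]}_{X_0}\otimes\omega_{X_0})$, which Proposition \ref{vanishing (1,1)} kills (its hypotheses hold by Lemma \ref{reflexive serre vanishing}, Proposition \ref{kodaira} and $H^0(X_0,\omega_{X_0})=0$). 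You instead aim at the strictly stronger vanishing $H^2(X_0,T_{X_0})=0$, dualize to $\mathrm{Ext}^1(T_{X_0},\omega_{X_0})$, and are then left with the local term $H^0(X_0,\mathcal{E}\mathrm{xt}^1(T_{X_0},\omega_{X_0}))$, i.e.\ with showing that $T_{X_0}$ is maximal Cohen--Macaulay at each singular point. Your claim that this is ``controlled by the smoothness of the versal deformation spaces'' is a non sequitur: smoothness of the versal deformation of an lci singularity concerns $T^2=\mathrm{Ext}^2(\Omega,\sO)=0$, not the depth of the tangent module. Indeed, the paper obtains $H^2(X,T_X)=0$ only for ordinary double points, and only through the small-resolution machinery on algebraic spaces (Propositions \ref{relative vanishing} and \ref{prop on a1}); for general Gorenstein terminal singularities it proves only $\mathrm{Ext}^2(\Omega_X,\sO_X)=0$, which suffices for unobstructedness.

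Two smaller points. You do not need base point freeness of $|-K_{X_0}|$ itself, which is delicate for Gorenstein Fano 3-folds: Theorem \ref{unobstructed} only asks for some $1\le i_0\le p$ with $|-i_0K_X|$ free and generically \'etale on its image, and for $p\gg 0$ one may take $i_0$ such that $-i_0K_X$ is very ample already in characteristic zero. And once you replace $H^2(T_{X_0})$ by $\mathrm{Ext}^2(\Omega_{X_0},\sO_{X_0})$, the semicontinuity descent should likewise be run for the relative $\mathrm{Ext}^2$ of $\Omega_{\mathcal{X}/\Spec R}$, after which \cite[Proposition 6.4 (c)]{Vi} gives unobstructedness over $\C$.
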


\begin{rem}
Theorem \ref{namikawa result1} was recently generalized by Sano \cite{Sa} to the case of complex projective weak Fano 3-folds with terminal singularities. 
\end{rem}

We first prove a characteristic $p$ analog of Theorem \ref{namikawa result1}. 
A variety $X$ over a perfect field $k$ of characteristic $p>0$ is said to be \textit{liftable} to the Witt ring $W(k)$ of $k$ if there exists a flat morphism $\mathcal{X} \to \Spec W(k)$ of schemes with closed fiber isomorphic to $X$. 

\begin{thm}\label{unobstructed}
Let $X$ be a globally $F$-split projective Fano 3-fold over an algebraically closed field of characteristic $p>0$ which has only isolated complete intersection singularities.  
Suppose that there exists an integer $1 \le i_0 \le p$ such that $|-i_0K_X|$ is base point free and the morphism $\varphi_{|-i_0K_X|}:X \to \mathbb{P}^N=\mathbb{P}(H^0(X,\sO_X(-i_0K_X))^*)$ induced by $|-i_0K_X|$ is generically \'etale on its image. 
Then the deformations of $X$ are unobstructed and, in particular, $X$ is liftable to $W(k)$.
\end{thm}
\begin{proof}
First note that $H^0(X, \omega_X)=0$, because $X$ is Fano.  
The anti-canonical line bundle $\omega_X^{-1}$ satisfies the conditions (i)-(iii) in Proposition \ref{vanishing (1,1)}. 
Indeed, (i) is verified by Lemma \ref{reflexive serre vanishing} and (ii), (iii) are done by Proposition \ref{kodaira} and the vanishing of $H^0(X, \omega_X)$. 
Since $\Omega_X$ is reflexive by \cite[Proposition 8.1]{Lip} (or by Lemma \ref{depth}), 
it follows from Proposition \ref{vanishing (1,1)} that $H^1(X, \Omega_X \otimes \omega_X)=0$. 
We see by Serre duality that $\mathrm{Ext}^2(\Omega_X, \mathcal{O}_X)=0$, which implies the first assertion by \cite[Proposition 6.4 (c)]{Vi}.
Moreover, it follows from \cite[Proposition 6.3]{Vi} and \cite[Theorem 22.3]{Mat} that $X$ is liftable to $W(k)$.
\end{proof}

\begin{rem}
Fujita's very ampleness conjecture says that if $X$ is an $n$-dimensional projective variety over an algebraically closed field and $\mathcal{L}$ is an ample line bundle on $X$, then $\omega_X \otimes \mathcal{L}^{\otimes (n+2)}$ is very ample. 
If Fujita's conjecture holds for $\omega_X$, then the integer $i_0$ in Theorem \ref{unobstructed} exists when $p \ge 5$. 
In particular, it exists if $|-K_X|$ is base point free and $p \ge 5$ (see \cite[Example 1.8.23]{La}). 
\end{rem}

Next, we discuss when a globally $F$-split Fano 3-fold admits a smoothing. 
We use the following vanishing theorem, which can be viewed as a characteristic $p$ analog of the small resolution case of \cite[Theorem 2 (a')]{St}. 
\begin{prop}\label{relative vanishing}
Let $X$ be a normal globally $F$-split projective 3-fold over a perfect field of characteristic $p>0$ with $H^0(X, \omega_X)=0$. 
Let $\mathcal{L}$ be an ample line bundle on $X$, and suppose there exists an integer $1 \le i_0 \le p$ such that $\mathcal{L}^{\otimes i_0}$ is globally generated and the morphism $\Phi_{\mathcal{L}^{\otimes i_0}}:X \to \mathbb{P}^{N}=\mathbb{P}(H^0(X,\mathcal{L}^{\otimes i_0})^*)$ induced by $\mathcal{L}^{\otimes i_0}$ is generically \'etale on its image.
Suppose that $f:G \to X$ is a small resolution, that is, $G$ is a smooth integral algebraic space over $k$ and $f$ is a proper small birational morphism.
If $R^1 f_* \sO_{G}=0$, then 
\[H^1(G, \Omega_G \otimes f^*\mathcal{L}^{-1})=0.\] 
\end{prop}
\begin{proof}
First note that $G$ is globally $F$-split by Lemma \ref{space small GFS}.
Let $Z \subseteq \PP^N_k$ be the scheme theoretic image of $\Phi_{\mathcal{L}^{\otimes i_0}}$ and $\Phi := \Phi_{\mathcal{L}^{\otimes i_0}} \circ f : G \to Z$ be the composite map of $\Phi_{\mathcal{L}^{\otimes i_0}}$ and $f$. 
Since $\Phi$ is generically \'{e}tale and $\Phi^* (\sO_{\PP^N}(1)|_Z) \cong f^* \mathcal{L}^{\otimes i_0}$, it suffices to verify that the line bundle $f^*\mathcal{L}$ satisfies the conditions (i)-(iii) in Proposition \ref{vanishing on stack}.

By assumption and Lemma \ref{space small}, we have $R^j f_* \sO_{G}=0$ for all $j \ge 1$ and $f_* \sO_G= \sO_X$.
Therefore, $H^2(G, f^*\mathcal{L}^{\otimes (\pm p^e)}) \cong H^2(X, \mathcal{L}^{\otimes (\pm p^e)})=0$ for all integers $e \ge 1$ by Proposition \ref{kodaira}, that is, the condition (ii) is satisfied for $f^*\mathcal{L}$. 
Similarly, $H^3(G, f^*\mathcal{L}^{\otimes (p-i_0)p^e}) \cong H^3(X, \mathcal{L}^{\otimes (p-i_0)p^e})=0$ by the assumption $H^0(X, \omega_X)=0$ when $i_0=p$ and by Proposition \ref{kodaira} when $i_0<p$. 
In other words, the condition (iii) is also satisfied for $f^*\mathcal{L}$. 

Next, we will check the condition (i). 
The Leray spectral sequence 
\[E^{i,j}_2=H^i(X, R^jf_*(T_G \otimes \omega_G) \otimes \mathcal{L}^{\otimes p^e}) \Rightarrow E^{i+j}=H^{i+j}(G, T_G \otimes \omega_G \otimes f^*\mathcal{L}^{\otimes p^e})\]
induces an isomorphism $E^{0,2}_2 \cong E^2$ for all sufficiently large $e$, because if $i>0$, then $E^{i,j}_2=0$ for all sufficiently large $e$ by Serre vanishing. 
On the other hand, since $f$ is small, $R^2f_*(T_G \otimes \omega_G)=0$ by Lemma \ref{space small} and therefore $E^{i,2}_2=0$ for all $i$ and $e$. 
In conclusion, for all sufficiently large $e$, one has 
\[
0=H^0(X, R^2 f_*(T_G \otimes \omega_G) \otimes \mathcal{L}^{\otimes p^e}) \cong 
H^2(G, T_G \otimes \omega_G \otimes f^*\mathcal{L}^{\otimes p^e}), 
\]
which is the Serre dual of $H^1(G, \Omega_G \otimes f^*\mathcal{L}^{\otimes (-p^e)})$ (see \cite[Theorem 2.22]{Ni} for Serre duality for algebraic spaces).  
Thus, (i) is satisfied for $f^*\mathcal{L}$. 
\end{proof}

\begin{defn}\label{def A1 sing}
Let $x \in X$ be a closed point of a three-dimensional algebraic variety $X$ over an algebraically closed field $k$, and $\widehat{\sO_{X,x}}$ denotes the $\m_{X,x}$-adic completion of the local ring $\sO_{X,x}$. 
We say that $x \in X$ is an \textit{ordinary double point}\footnote{such a singularity is also called an $A_1$-singularity.} if the completion $\widehat{\sO_{X,x}}$ is isomorphic to $k[[s,t,u,v]]/(st-uv)$, where $k[[s,t,u,v]]$ is the 4-dimensional formal power series ring over $k$. 
\end{defn}

The following lemma is well-known to experts, but we provide a proof for the convenience of the reader. 
\begin{lem}[cf.~\cite{Fr}]\label{basic resolution}
Let $V= \Spec k[s,t,u,v]/(st-uv)$, where $k[s,t,u,v]$ is the 4-dimensional polynomial ring over an algebraically closed field $k$, and $Z \subseteq V$ be the closed subscheme defined by the ideal sheaf $(s,u) \subseteq \sO_V$ and $\pi: W \to V$ be the blow-up along $Z$. 
Then $\pi$ is a small resolution such that $R^1 \pi_* T_W= R^1 \pi_* \sO_W=0$.
\end{lem}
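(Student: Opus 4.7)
The plan is to exhibit $W$ explicitly as the familiar small resolution of the conifold, realized as the total space of a rank-two vector bundle over $\PP^1$, and then extract the vanishings from Leray and the projection formula.

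First I would describe $W$ by two affine charts. Working inside $V \times \PP^1_{[X:Y]}$ and computing the Rees algebra of $(s,u)$, the chart $\{X \neq 0\}$ with $y = Y/X$ becomes $U_1 := \Spec k[s, v, y] \cong \mathbb{A}^3_k$ (with $u = sy$ and $t = vy$), and the chart $\{Y \neq 0\}$ with $x = X/Y$ becomes $U_2 := \Spec k[u, t, x] \cong \mathbb{A}^3_k$ (with $s = xu$ and $v = xt$), the two being glued along $xy = 1$. In particular $W$ is smooth; $\pi$ is an isomorphism off the singular point $\m = (s,t,u,v)$ of $V$; and the exceptional fibre $\pi^{-1}(\m)$ is the $\PP^1$ obtained by gluing $\{s=v=0\} \subset U_1$ and $\{u=t=0\} \subset U_2$. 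Since $\dim W - \dim \Exc(\pi) = 3 - 1 = 2$, the morphism $\pi$ is a small resolution.

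Next I would observe that the other projection $\pi' : W \to \PP^1$, given by $[1:y]$ on $U_1$ and $[x:1]$ on $U_2$, presents $W$ as the total space of $\mathcal{E} := \sO_{\PP^1}(-1)^{\oplus 2}$: the fibre coordinates $(s,v)$ on $U_1$ transform to $(u,t) = (sy, vy)$ on $U_2$, which is exactly the transition rule for two copies of $\sO_{\PP^1}(-1)$. Consequently $\pi'$ is affine,
\[
\pi'_* \sO_W \;\cong\; \mathrm{Sym}^\bullet(\mathcal{E}^\vee) \;\cong\; \bigoplus_{n \ge 0} \sO_{\PP^1}(n)^{\oplus (n+1)},
\]
and the relative tangent sheaf is $T_{W/\PP^1} \cong (\pi')^* \mathcal{E}$.

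Finally I would derive the two vanishings. Since $V$ is affine, $R^1 \pi_* \mathcal{F}$ for a coherent $\mathcal{F}$ on $W$ corresponds to $H^1(W, \mathcal{F})$, and since $\pi'$ is affine we have $H^1(W, \mathcal{F}) = H^1(\PP^1, \pi'_* \mathcal{F})$. For $\mathcal{F} = \sO_W$ this yields $\bigoplus_{n \ge 0} H^1(\PP^1, \sO_{\PP^1}(n))^{\oplus (n+1)} = 0$. For $\mathcal{F} = T_W$ I would push forward the relative tangent sequence
\[
0 \to (\pi')^* \mathcal{E} \to T_W \to (\pi')^* T_{\PP^1} \to 0
\]
(using $T_{\PP^1} \cong \sO_{\PP^1}(2)$) and apply the projection formula to obtain an exact sequence on $\PP^1$ whose outer terms are direct sums of line bundles of degrees $\ge -1$ and $\ge 2$ respectively, both having vanishing $H^1$. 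Hence $H^1(W, T_W) = 0$. No step is a serious obstacle; the only point requiring care is the identification of the chart transition data with $\sO_{\PP^1}(-1)^{\oplus 2}$, after which everything reduces to standard vanishings of $H^1$ of line bundles on $\PP^1$.
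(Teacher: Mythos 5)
Your proof is correct, but it follows a different route from the paper's. The paper gets $R^1\pi_*\sO_W=0$ by citing pseudo-rationality of $V$ (via Kov\'acs), and gets $R^1\pi_*T_W=0$ by the theorem on formal functions: it filters $\sO_W/I^n$ by the powers of the ideal $I$ of the exceptional curve $C\cong\PP^1$, uses $I^n/I^{n+1}\cong S^n(I/I^2)\cong\sO_{\PP^1}(n)^{\oplus(n+1)}$, and kills $H^1(C,(I^n/I^{n+1})\otimes T_W|_C)$ via the dual of the conormal sequence $0\to T_C\to T_W|_C\to (I/I^2)^*\to 0$. You instead exploit the second projection $\pi'\colon W\to\PP^1$ exhibiting $W$ as the total space of $\sO_{\PP^1}(-1)^{\oplus 2}$: since $\pi'$ is affine and $V$ is affine, both vanishings become statements about $H^1$ of $\pi'_*\sO_W\cong\bigoplus_n\sO_{\PP^1}(n)^{\oplus(n+1)}$ and of the pushforward of the relative tangent sequence, with no completion argument and no external input. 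The numerology is identical in the two arguments (your graded pieces of $\pi'_*$ are exactly the paper's $I^n/I^{n+1}$, and your relative tangent sequence is globally what the paper's restricted tangent sequence is on $C$), but your version is more self-contained: it replaces the appeal to pseudo-rationality and to formal functions by the single observation that $\pi'$ is affine. The only point to be careful about --- the identification of the transition data with $\sO_{\PP^1}(-1)^{\oplus 2}$ and the resulting formulas $\pi'_*\sO_W\cong\mathrm{Sym}^{\bullet}(\mathcal{E}^{\vee})$ and $T_{W/\PP^1}\cong(\pi')^*\mathcal{E}$ --- is handled correctly, so I see no gap.
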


\begin{proof}
Let $v \in V$ denote the singular point of $V$, and set $C:=\pi^{-1}(v)$. 
Since $Z$ is Cartier outside $v$, the morphism $\pi$ is an isomorphism over $U : = V \setminus \{ v \}$.
One can easily see that $W$ is regular, $ C \cong \PP^1_k$ and $I/I^2 \cong \sO_{\PP^1}(1)^{\oplus 2}$, where $I \subseteq \sO_W$ is the defining ideal of $C$. 
In particular, $\pi$ is a small resolution. 
On the other hand, $V$ is strongly $F$-regular by Fedder's criterion (\cite[Theorem 2.3]{Gla}). 
It, therefore, follows along the same lines as the proof of the claim in the proof of Theorem \ref{Bogomolov} that $R^1 \pi_* \sO_W=0$. 

It remains to show that $R^1\pi_* T_{W}=0$.
Since $\pi$ is an isomorphism over $U$ and the $\m_v$-adic completion $\sO_{V,v} \to \widehat{\sO_{V, v}}$ is faithfully flat, it is enough to show that the $\m_v$-adic completion of the stalk of $R^1 f_* T_{W}$ at $v$ is zero.
By the theorem of formal functions, it suffices to prove that $H^1(W, (\sO_{W}/I^n) \otimes_{W} T_{W})=0$ for every $n \ge 1$. 
Looking at the long exact sequence of cohomology associated to 
\[
0 \to I^n/I^{n+1} \to \sO_{W}/I^{n+1} \to \sO_{W}/I^n  \to 0
\]
tensored with $T_W$, we can reduce the problem to showing that for all $n \ge 0$, 
\[H^1(C, (I^n/I^{n+1}) \otimes_C T_W|_C)=0.\] 
It follows from \cite[II. Theorem 8.21.A]{Hart} that for every $n \ge 1$, we have the isomorphisms 
\[I^n/I^{n+1} \cong S^n(I/I^2) \cong {\sO_{\PP^1}(n)}^{\oplus (n+1)}.\]
Combining this with the dual of the second exact sequence
\[
0 \to T_C \to T_{W}|_C \to (I/I^2)^* \to 0,
\] 
we can verify the vanishing $H^1(C, (I^n/I^{n+1}) \otimes_C T_W|_C)=0$, which completes the proof.
\end{proof}

\begin{prop}\label{prop on a1}
Let $X$ be a three-dimensional variety over an algebraically closed field.
Suppose that $X$ has only ordinary double points.
There then exists a small resolution $f:G \to X$ from a smooth integral algebraic space $G$ that satisfies $R^1f_*T_G= R^1f_*\sO_G=0$.  
\end{prop}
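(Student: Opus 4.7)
The plan is to build $G$ by gluing étale-local models of the basic small resolution from Lemma \ref{basic resolution}. Let $\Sing(X) = \{x_1, \dots, x_n\}$ and $X_0 := X \setminus \Sing(X)$. Since $\widehat{\sO_{X,x_i}} \cong k[[s,t,u,v]]/(st-uv)$ for each $i$, Artin approximation produces an étale morphism $p_i : U_i \to X$ and an étale morphism $q_i : U_i \to V := \Spec k[s,t,u,v]/(st-uv)$ with $p_i^{-1}(x_i) = \{u_i\}$ a single point that maps to the singular point of $V$ under $q_i$. After further shrinking I may assume $p_i(U_i) \cap \Sing(X) = \{x_i\}$, so that $\{X_0 \hookrightarrow X,\; p_i : U_i \to X\}$ forms an étale cover of $X$.

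I would then set $\widetilde{U_i} := W \times_V U_i$, where $\pi : W \to V$ is the small resolution from Lemma \ref{basic resolution}. Via the étale projection $\widetilde{U_i} \to W$, each $\widetilde{U_i}$ is smooth, and the other projection $f_i : \widetilde{U_i} \to U_i$ is proper, small, and birational. To glue these local resolutions into a single object, I need to specify compatible identifications over each fiber product $U_i \times_X U_j$. When $i = 0$, $j = 0$, or $i \ne j$ in $\{1, \dots, n\}$, the fiber product lies entirely over $X_0$, and $\widetilde{U_i}$, $\widetilde{U_j}$ are tautologically isomorphic to the étale cover there. When $i = j \ge 1$, the fiber product $U_i \times_X U_i$ near $(u_i,u_i)$ reduces to the diagonal (because $p_i^{-1}(x_i)$ is a single point), while off the diagonal it again lies over $X_0$. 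These descent data manifestly satisfy the cocycle condition, so étale descent for proper algebraic spaces assembles $G$ together with a proper morphism $f : G \to X$; by construction $G$ is smooth, integral, and separated, and $f$ is small birational with $\Exc(f)$ a disjoint union of copies of $\PP^1$.

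For the cohomology vanishing, I would apply flat base change twice. Along the étale $p_i$, flat base change gives $p_i^* R^1 f_* \mathcal{F} \cong R^1 (f_i)_* (\mathcal{F}|_{\widetilde{U_i}})$ for $\mathcal{F} \in \{\sO_G, T_G\}$. Because $\widetilde{U_i} \to W$ is étale, $T_{\widetilde{U_i}}$ is the pullback of $T_W$, and another flat base change along $q_i$ identifies $R^1 (f_i)_* (\mathcal{F}|_{\widetilde{U_i}})$ with the pullback of $R^1 \pi_* \sO_W$ or $R^1 \pi_* T_W$, both of which vanish by Lemma \ref{basic resolution}. Combined with the trivial vanishing over $X_0$, this yields $R^1 f_* \sO_G = R^1 f_* T_G = 0$.

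The main obstacle I anticipate is the gluing itself: because the two small resolutions of a standard conifold are related by a flop and are therefore incompatible as schemes, the global object $G$ is genuinely an algebraic space rather than a scheme, which is precisely why the conclusion is phrased in that category. The simplification that makes the descent data unambiguous --- bypassing any choice of flop --- is arranging $p_i^{-1}(x_i) = \{u_i\}$ by shrinking, so that the only identification of $\widetilde{U_i}$ with itself over the singular locus is the diagonal.
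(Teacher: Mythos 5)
Your proposal is correct and follows essentially the same route as the paper: \'etale local models of the conifold resolution obtained from Artin approximation (with the preimage of each singular point arranged to be a single point, which is indeed the key device that kills the flop ambiguity), gluing in the category of algebraic spaces, and flat/\'etale base change to descend the vanishing of $R^1$ from Lemma \ref{basic resolution}. The only difference is the packaging of the gluing step --- you invoke effectivity of \'etale descent for algebraic spaces with explicit cocycle data, whereas the paper builds $G$ inductively via elementary distinguished squares and verifies the needed Cartesian property in Proposition \ref{cube} --- but this is a matter of formalism rather than of mathematical substance.
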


\begin{proof}
Let $\{x_1, \dots, x_n \} \subseteq X$ be the set of all singular points of $X$, and set $U := X \setminus \{x_1, \dots, x_n \}$.
Let $W,V,Z, \pi$ be as in Lemma \ref{basic resolution} and $v \in V$ be the singular point of $V$.
Since each $x_i \in X$ is an ordinary double point, it follows from \cite[Corollary 2.6]{Art} that for each $i$, there exist a variety $V_i$ whose singular locus is a single point $v_i \in V$ and \'{e}tale morphisms $\phi_i: V_i \to X$ and $\psi_i: V_i \to V$ such that $\phi_i(v_i)=x_i$ and $\psi_i(v_i)=v$.
\[
\xymatrix{
& V_i \ar_-{\phi_i}[ld] \ar^-{\psi_i}[rd] & \\
X & & V
}
\]

Let $\pi_i : W_i \to V_i$ be the blow-up along the closed subscheme $Z_i:=\psi_i ^{-1}(Z) \subseteq V_i$.
Then $\pi_i$ is a small resolution with $R^1(\pi_i)_* \sO_{W_i}=R^1(\pi_i)_* T_{W_i}=0$,  because the diagram 
\[
\xymatrix{
W_i \ar^-{}[d] \ar^-{\pi_i}[r] & V_i  \ar^-{\psi_i}[d]\\
W \ar^-{\pi}[r]& V 
}
\]
is Cartesian. 
Set $V_i^{\circ} : =\phi^{-1}_i(U) \subseteq V_i$ and $W_i^{\circ} : = \pi_i^{-1}(V_i^{\circ}) \subseteq W_i$.
Since $Z_i|_{V_i^{\circ}}$ is a Cartier divisor, $\pi_i$ induces the isomorphism $W_i^{\circ} \cong V_i^{\circ}$.
We define the open subscheme $H_i$ of $X$ by $H_i:= X \setminus \{x_{i+1}, x_{i+2}, \dots, x_n\}$, and let $\iota_i : H_{i-1} \hookrightarrow H_i$ denote the natural inclusion. 
Note that $\varphi_i:V_i \to X$ factors through $H_i$. 
For each $0 \le i \le n$, we will construct a small resolution $f_i: G_i \to H_i$ that is an isomorphism over $U \subseteq H_i$ and satisfies $R^1 (f_i)_*\sO_{G_i}=R^1 (f_i)_*\sO_{H_i}=0$.

If $i=0$, then we define $G_0 : =H_0=U$ and $f_0 := \mathrm{id}: G_0 \to U$.
Suppose that $i \ge 1$ and $f_{i-1}: G_{i-1} \to H_{i-1}$ is a small resolution as above.
It follows from \cite[Lemma 0DVJ]{Sta} that there exist an algebraic space $G_i$, an open immersion $a_i : G_{i-1} \hookrightarrow G_i$ and an \'{e}tale morphism $b_i: W_i \to G_i$ such that $(G_{i-1} \subset G_i,b_i:W_i \to G_i)$ is an elementary distinguished square and the diagram 
\[
\xymatrix{
W_i^{\circ} \ar@{^{(}->}[r] \ar^-{}[d] & W_i \ar^-{b_i}[d]\\
G_{i-1} \ar^-{a_i}[r] & G_i
}
\]
is a pushout in the category of algebraic space over $k$. 
The universality of pushouts yields a morphism $f_i : G_i \to H_i$ such that each square in the diagram 
\[
\xymatrix{
& W_i^{\circ} \ar[rr] \ar[dd]|(.5)\hole \ar[ld] && W_i \ar^-{b_i}[dd] \ar^-{\pi_i}[ld]\\
V_i^{\circ} \ar[rr] \ar[dd]  && V_i \ar^(.4){\phi_i}[dd]  & \\
& G_{i-1} \ar_(.4){a_i}[rr]|(.52)\hole \ar^(0.35){f_{i-1}}[ld]&& G_i \ar^-{f_i}[ld]\\
H_{i-1} \ar^-{\iota_i}[rr] && H_i &
}
\]
is commutative. Then by Proposition \ref{cube}, the following diagram is Cartesian.
\[
\xymatrix{
G_{i-1} \sqcup W_i \ar^-{a_i \sqcup b_i}[rr] \ar^-{f_{i-1} \sqcup \pi_i}[dd] && G_i \ar^-{f_i}[dd]\\
\\
H_{i-1} \sqcup V_i \ar^-{\iota_i \sqcup \phi_i}[rr] && H_i
}
\]
Since the horizontal arrows are \'{e}tale surjective and $f_{i-1} \sqcup \pi$ is proper, the morphism $f_i$ is also proper.
We note that this implies that $G_i$ is separated.
Moreover, it follows from \cite[Lemma 073K]{Sta} that $R^1 (f_i)_*\sO_{G_i}=R^1(f_i)_* T_{G_i}=0$.
It also follows from the same diagram that $f_i$ is a small birational that is an isomorphism over $U$, as desired.

We now obtain the assertion of the proposition by choosing $G:=G_n$ and $f:=f_n: G_n \to H_n=X$.
\end{proof}

A complex Fano 3-fold with only ordinary double points is smoothable by a flat deformation. 
\begin{thm}[\textup{\cite[Corollary 4.2]{Fr}, \cite[Proposition 4]{Na}}]\label{namikawa result2}
Let $X$ be a complex projective Fano 3-fold with ordinary double points. Then $H^2(X, T_X)=0$ and, in particular, $X$ admits a smoothing. 
\end{thm}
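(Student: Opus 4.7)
The plan is to mirror the characteristic-$p$ analysis from earlier in this section, now over $\C$, invoking Kawamata-Viehweg vanishing in place of the Frobenius-splitting vanishings.

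First, by the characteristic-free construction of Proposition \ref{prop on a1}, there exists a small resolution $f : G \to X$ from a smooth integral algebraic space $G$ such that $R^1f_*\sO_G = R^1f_*T_G = 0$. Combining with Lemma \ref{space small}(1)-(2), one has $f_* T_G \cong T_X$ and $R^if_*T_G = 0$ for all $i \ge 1$. The Leray spectral sequence for $f$ therefore collapses to give the isomorphism
\[
H^2(X, T_X) \;\cong\; H^2(G, T_G).
\]

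Next, I would show $H^2(G, T_G) = 0$. Serre duality for smooth proper algebraic spaces (cf.~\cite[Theorem 2.22]{Ni}) identifies this group with the dual of $H^1(G, \Omega^1_G \otimes \omega_G)$. Since $f$ is small and crepant, $-K_G \cong f^*(-K_X)$ is nef and big, making $G$ a smooth weak-Fano algebraic space. In characteristic zero, the relevant Akizuki-Nakano-type vanishing holds: an argument via Kawamata-Viehweg vanishing (or the classical Friedman-Namikawa analysis of the $(-1,-1)$-curves emerging from the resolution of each ordinary double point) yields $H^1(G, \Omega^1_G \otimes \omega_G) = 0$, hence $H^2(X, T_X)=0$.

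Finally, the smoothing follows from two ingredients: the vanishing $H^2(X, T_X) = 0$ just established (ensuring no obstruction to extending deformations past first order), and the classical fact that each ordinary double point locally admits a smoothing $\{st-uv=\epsilon\}$. Together with the unobstructedness of global deformations of $X$ (the complex analogue of Theorem \ref{namikawa result1}, which is classical), the local smoothings at each singular point can be glued to a global one-parameter deformation over a smooth pointed curve whose generic fiber is smooth. The principal obstacle is the vanishing step: establishing an Akizuki-Nakano-type statement with a merely nef-and-big anticanonical bundle on the algebraic space $G$ is not a standard result in the literature, and will require careful adaptation of either Kawamata-Viehweg or of the Frobenius-splitting techniques developed earlier in this paper.
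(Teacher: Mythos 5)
Your overall architecture (small resolution via Proposition \ref{prop on a1}, Leray collapse to $H^2(X,T_X)\cong H^2(G,T_G)$, Serre duality reducing to $H^1(G,\Omega^1_G\otimes\omega_G)$) correctly mirrors the proof of Theorem \ref{smoothing} and is characteristic-free. But there is a genuine gap at precisely the step you flag yourself: the vanishing $H^1(G,\Omega^1_G\otimes\omega_G)=0$. Kawamata--Viehweg vanishing concerns $H^i(G,\omega_G\otimes\mathcal{L})$ for $\mathcal{L}$ nef and big and says nothing about twists of $\Omega^1_G$; moreover, Akizuki--Nakano-type vanishing for merely nef and big line bundles is \emph{false} in general (Ramanujam's examples), so ``$G$ is weak Fano'' cannot by itself carry the argument. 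What actually makes the step work --- both in Proposition \ref{relative vanishing} and in its characteristic-zero counterpart --- is that $\omega_G^{-1}=f^*\omega_X^{-1}$ is not just nef and big but the pullback of an \emph{ample} line bundle under the proper small birational morphism $f$; the relevant characteristic-zero statement is Steenbrink's relative vanishing theorem \cite[Theorem 2 (a')]{St}, which the paper explicitly identifies as the model for Proposition \ref{relative vanishing}. Citing Steenbrink, or reproving his statement using the smallness of $f$ (so that $R^2f_*(T_G\otimes\omega_G)=0$ by Lemma \ref{space small}) together with a vanishing on $X$ itself, would close the gap; an unspecified ``adaptation of Kawamata--Viehweg'' does not. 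Note also that the Frobenius descent from $H^1(G,\Omega^1_G\otimes f^*\mathcal{L}^{\otimes(-p^e)})$ down to $e=0$ in Proposition \ref{vanishing on stack} has no analogue over $\C$, so in characteristic zero the vanishing must come out of Steenbrink directly rather than from a degeneration argument.

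For comparison, the paper does not prove Theorem \ref{namikawa result2} directly: it is quoted from \cite{Fr} and \cite{Na}, and the paper's own contribution is the observation that its characteristic-$p$ Theorem \ref{smoothing}, applied to the modulo $p$ reduction of $X$ (which is globally $F$-split with only ordinary double points for $p\gg 0$ by \cite[Theorem 1.2]{SS}) together with semicontinuity of $h^2(T_X)$, yields a purely algebraic proof. Finally, your last paragraph should be tightened: the standard deduction of smoothability from $H^2(X,T_X)=0$ is that the morphism from the global deformation functor of $X$ to the product of the local deformation functors at the nodes is smooth, so the local smoothings $\{st-uv=\epsilon\}$ lift to a global one-parameter deformation; ``gluing'' local smoothings is not quite the right mechanism.
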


\begin{rem}
Namikawa \cite{Na} in fact proved that every complex projective Fano 3-fold $X$ with Gorenstein terminal singularities admits a smoothing even if $H^2(X, T_X) \ne 0$. 
\end{rem}

We prove a characteristic $p$ analog of Theorem \ref{namikawa result2}. 

\begin{thm}\label{smoothing}
Let $X$ be a globally $F$-split projective Fano 3-fold over an algebraically closed field $k$ of characteristic $p>0$ 
which has only ordinary double points.  
Suppose that there exists some $1 \le i_0 \le p$ such that $|-i_0K_X|$ is base point free and the morphism $\varphi_{|-i_0K_X|}:X \to \mathbb{P}^N=\mathbb{P}(H^0(X,\sO_X(-i_0K_X))^*)$ induced by $|-i_0K_X|$ is generically \'etale on its image. 
Then $H^2(X, T_X)=0$ and, in particular, $X$ admits a smoothing.
\end{thm}
\begin{proof}
By Proposition \ref{prop on a1}, there exists a small resolution $f:G \to X$ such that $R^1f_*\sO_G=R^1f_*T_G=0$. 
Substituting $\mathcal{L}=\omega_X^{-1}$ in Proposition \ref{relative vanishing}, one has 
\[
0=H^1(G, \Omega_G \otimes f^*\omega_X)=H^1(G, \Omega_G \otimes \omega_G),
\]
which is the Serre dual of $H^2(G, T_G)$. 
Since $f$ is small, $f_*T_G \cong T_X$ and $R^jf_*T_G=0$ for all $j \ge 2$ by Lemma \ref{space small}.  
From this, together with the vanishing $R^1f_*T_G=0$, we obtain the isomorphisms
\[H^2(X, T_X) \cong H^2(X, f_*T_G) \cong H^2(G, T_G)=0.\]
It then follows from \cite[Theorem 4.14 and Lemma 5.1]{NL} that $X$ is smoothable by a flat deformation. 
\end{proof}

\begin{rem}
Since the modulo $p$ reduction of a complex projective Fano 3-fold with only Gorenstein terminal singularities is a globally $F$-split 3-fold with only isolated complete intersection singularities for sufficiently large primes $p$ (see \cite[Theorem 1.2]{SS}), Theorem \ref{unobstructed} (resp. Theorem \ref{smoothing}) gives a purely algebraic proof of Theorem \ref{namikawa result1} (resp. Theorem \ref{namikawa result2}). 
\end{rem}

\section{Kodaira vanishing for thickenings}\label{thickening section}
Throughout this section, let $X$ be a locally complete intersection closed subvariety of the projective space $\PP^N_k$ over a perfect field $k$.
Let $d$ denote the dimension of $X$ and $s$ denote the dimension of the singular locus of  $X$. 
For each integer $t \ge 1$, let $X_t \subseteq \PP^N_k$ be the $t$-th thickening of $X$, that is, the closed subscheme defined by $\mathcal{I}^t$, where $\mathcal{I} \subseteq \sO_{\PP^N_k}$ is the defining ideal sheaf of $X$ in $\PP^N_k$.

Bhatt-Blickle-Lyubeznik-Singh-Zhang \cite{BBLSZ} proved that the Kodaira vanishing theorem holds on  $X_t$ in characteristic zero.  
\begin{thm}[\textup{\cite[Theorem 3.1]{BBLSZ}}]\label{thicknings in char. 0}
Suppose that $k$ is of characteristic zero. 
Then for every integers $i<d-s$, $\ell \ge 1$ and $t \ge 1$, we have 
\[H^i(X_t, \sO_{X_t}(-\ell))=0.\] 
\end{thm}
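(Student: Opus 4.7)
The plan is to induct on the thickening order $t$. The base case $t=1$ asks for $H^i(X,\sO_X(-\ell))=0$ with $i<d-s$ and $\ell\ge 1$, which is exactly the $j=0$ instance of Hamm's theorem (Theorem \ref{Hamm}) applied to the ample line bundle $\sO_X(\ell)$. For the inductive step, I would use the short exact sequence
\[
0 \to (\mathcal{I}^{t-1}/\mathcal{I}^{t})\otimes\sO(-\ell) \to \sO_{X_t}(-\ell) \to \sO_{X_{t-1}}(-\ell) \to 0.
\]
Because $\mathcal{I}$ annihilates $\mathcal{I}^{t-1}/\mathcal{I}^{t}$, the kernel is naturally a coherent sheaf on $X=X_1$. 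Together with the inductive hypothesis, the problem reduces to showing
\[
H^i(X,\,(\mathcal{I}^{t-1}/\mathcal{I}^{t})\otimes\sO(-\ell)) = 0 \quad \text{for } i<d-s.
\]

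Since $X$ is locally complete intersection in $\PP^N$, the conormal sheaf $\mathcal{N}^{*}:=\mathcal{I}/\mathcal{I}^2$ is locally free of rank $N-d$ on $X$, and the natural surjection $S^{t-1}(\mathcal{N}^{*})\twoheadrightarrow \mathcal{I}^{t-1}/\mathcal{I}^{t}$ is an isomorphism (in characteristic zero one freely identifies $S^m$ with $D_m$ via Lemma \ref{div sym}). So the task becomes showing $H^i(X,S^{t-1}(\mathcal{N}^{*})(-\ell))=0$ for $i<d-s$ and $\ell\ge 1$. To unwind $S^{t-1}(\mathcal{N}^{*})$, I would apply Proposition \ref{divided cpx} to the conormal sequence
\[
0 \to \mathcal{N}^{*} \to \Omega_{\PP^N}|_{X} \to \Omega_X \to 0,
\]
obtaining a complex
\[
0 \to S^{t-1}(\mathcal{N}^{*}) \to S^{t-1}(\Omega_{\PP^N}|_{X}) \to S^{t-2}(\Omega_{\PP^N}|_{X})\otimes \Omega_X \to \cdots \to \bigwedge^{t-1}\Omega_X \to 0
\]
that is exact on the smooth locus $U = X\setminus \Sing(X)$ of $X$.

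Breaking this complex into short exact sequences on $U$ and chasing via the associated hypercohomology spectral sequence reduces (up to local-cohomology contributions supported on $\Sing(X)$) the vanishing to statements of the form
\[
H^i\bigl(X,\,S^{t-1-j}(\Omega_{\PP^N}|_{X})\otimes \bigwedge^{j}\Omega_X(-\ell)\bigr)=0 \quad \text{for } i+j<d-s.
\]
The exterior-power factor $\bigwedge^{j}\Omega_X(-\ell)=\Omega_X^j(-\ell)$ is precisely Hamm's theorem on $X$, while the symmetric-power factor can be unwound by restricting the symmetric powers of the Euler sequence from $\PP^N$ to $X$ and invoking Bott vanishing on $\PP^N$, eventually reducing to negative twists of $\Omega_X^j$ on $X$ (again controlled by Hamm).

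The main obstacle is the failure of exactness of the divided-power complex along the singular locus $Z\subset X$ of dimension at most $s$, where $\Omega_X$ is not locally free. To handle this, I would use depth estimates: since $X$ is Cohen-Macaulay lci and $Z$ has codimension at least $d-s$, the sheaves $\Omega_X^j$ have depth $\ge d-s$ along $Z$ for the relevant range of $j$, so the local cohomology of the kernels and cokernels introduced by the non-exactness is pushed into cohomological degrees $\ge d-s$ and does not contaminate the target range $i<d-s$. With this depth input, the spectral-sequence bookkeeping closes cleanly and completes the induction.
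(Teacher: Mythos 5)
Your proposal is correct and follows essentially the same route as the paper: the paper deduces this theorem (Remark \ref{alternative proof}) from Theorem \ref{vanishing thickening}, whose hypothesis is supplied by Hamm's theorem, and the proofs of Theorem \ref{vanishing thickening} and Proposition \ref{vanishing D} carry out exactly your induction on $t$, the identification $\mathcal{I}^{t-1}/\mathcal{I}^t \cong S^{t-1}(\mathcal{I}/\mathcal{I}^2)$, the (divided-power) Koszul-type complex attached to the conormal sequence, the Euler sequence to resolve the $S^{t-1-j}(\Omega_{\PP^N}|_X)$ factors, and the depth estimates of Lemma \ref{depth} to control the failure of exactness along the singular locus. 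The only cosmetic differences are that the paper works with divided powers $D_m$ throughout (so the argument also runs in characteristic $p$) and that the symmetric powers of $\Omega_{\PP^N}|_X$ are handled by a short exact sequence of sheaves on $X$ coming from the Euler sequence together with Hamm's vanishing on $X$, rather than by Bott vanishing on $\PP^N$.
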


We pursue a characteristic $p$ analog of Theorem \ref{thicknings in char. 0}, making use of our vanishing results in \S \ref{vanishing section}. 

\begin{prop}\label{vanishing D}
Suppose that $H^i(X, \Omega^{j}_X(-\ell))=0$ for every integers $\ell \ge 1$ and $i, j \ge 0$ with $i+j <d-s$.
Then for every integers $i < d-s$, $\ell \ge 1$ and $t \ge 1$, we have 
\[H^i(X, D_t(\mathcal{I}/\mathcal{I}^2)(-\ell))=0.\]
\end{prop}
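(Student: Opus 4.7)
The plan is to apply Proposition \ref{divided cpx} twice---once to the twisted restricted Euler sequence and once to the conormal sequence---and to combine the two resulting hypercohomology spectral sequences.

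First I will establish an auxiliary vanishing statement: for every $n, r \ge 0$ and $\ell \ge 1$,
\[
H^i(X, D_n(\Omega_{\PP^N_k}|_X) \otimes \Omega_X^r(-\ell)) = 0 \quad \text{whenever } i + r < d - s.
\]
To get this, I apply Proposition \ref{divided cpx} to the restricted Euler sequence $0 \to \Omega_{\PP^N_k}|_X \to \sO_X(-1)^{\oplus (N+1)} \to \sO_X \to 0$. Since the cokernel $\sO_X$ is locally free and $\bigwedge^{\ge 2} \sO_X = 0$, the divided power complex collapses to a short exact sequence whose outer terms are, by Lemma \ref{div sym}(c), direct sums of line bundle twists $\sO_X(-n)$ and $\sO_X(-n+1)$. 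Tensoring with the locally free $\Omega_X^r$ preserves exactness, and feeding the hypothesis into the long exact sequence completes the argument.

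Next I apply Proposition \ref{divided cpx} to the conormal sequence
\[
0 \to \mathcal{I}/\mathcal{I}^2 \to \Omega_{\PP^N_k}|_X \to \Omega_X \to 0,
\]
which is exact because $X$ is locally complete intersection in $\PP^N_k$. Setting $C^0 = D_t(\mathcal{I}/\mathcal{I}^2)$ and $C^{r+1} = D_{t-r}(\Omega_{\PP^N_k}|_X) \otimes \bigwedge^r \Omega_X$ for $r \ge 0$ defines a complex $C^\bullet$ on $X$ that is exact on the smooth locus $U = X \setminus \mathrm{Sing}(X)$, since $\Omega_X|_U$ is locally free. Hence every cohomology sheaf $\mathcal{H}^q(C^\bullet)$ is supported on $\mathrm{Sing}(X)$, a closed subset of dimension $s$; moreover $\mathcal{H}^0(C^\bullet) = 0$, since the leftmost map between the locally free sheaves $D_t(\mathcal{I}/\mathcal{I}^2)$ and $D_t(\Omega_{\PP^N_k}|_X)$ is injective at the generic point (where $X$ is smooth and the conormal sequence splits), and the kernel is a torsion-free subsheaf of a torsion-free sheaf.

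The final step is a hypercohomology argument on $C^\bullet \otimes \sO_X(-\ell)$. The first hypercohomology spectral sequence $E_1^{p, q} = H^q(X, C^p(-\ell))$ has, by the auxiliary vanishing, $E_1^{p, q} = 0$ for $p \ge 1$ and $p + q \le d - s$; tracking the differentials identifies $\mathbb{H}^n(X, C^\bullet(-\ell)) \cong H^n(X, D_t(\mathcal{I}/\mathcal{I}^2)(-\ell))$ for $n < d - s$. The second spectral sequence $E_2^{p, q} = H^p(X, \mathcal{H}^q(C^\bullet)(-\ell))$ has $E_2^{p, 0} = 0$ throughout and $E_2^{p, q} = 0$ for $q \ge 1, p > s$ by the support observations. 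To conclude that $\mathbb{H}^n(X, C^\bullet(-\ell)) = 0$ in the range $n < d - s$, I will dévissage the complex into successive short exact sequences along the image-kernel filtration $B^r \subseteq Z^r \subseteq C^r$, inductively propagating the vanishing of $H^j(B^r(-\ell))$ by means of the auxiliary vanishing of each $H^j(C^r(-\ell))$, and using the depth bounds on $\Omega_X^r$ from Lemma \ref{depth} to eliminate the low-degree ($j \le s$) correction terms coming from $\mathcal{H}^q(C^\bullet)$. The hard part will be exactly this last piece: the low-degree cohomology of sheaves set-theoretically supported on $\mathrm{Sing}(X)$ is not automatically controlled by support alone, so the proof must combine Lemma \ref{depth} with the auxiliary vanishing to squeeze out those contributions.
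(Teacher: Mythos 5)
Your plan is correct and follows essentially the same route as the paper's proof: the auxiliary vanishing for the terms $D_{t-r}(\Omega_{\PP^N}|_X)\otimes\Omega_X^r(-\ell)$ via the restricted Euler sequence and Proposition \ref{divided cpx}, the complex built from the conormal sequence, and then the d\'evissage along the image--kernel filtration, where Lemma \ref{depth} is used to show the boundary sheaves $B^r$ satisfy $(S_2)$, hence are reflexive, hence coincide with $Z^r$ in the range $r\le d-s-2$, so that the complex is genuinely exact there and the cohomology chase closes. The hypercohomology spectral sequence packaging is harmless but superfluous --- as you yourself observe, it does not resolve the low-degree contributions from the singular locus, and the d\'evissage you fall back on is exactly the paper's argument.
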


\begin{proof}
Fix integers $t, \ell \ge 1$.
Since $X$ is a locally complete intersection, the sequence
\[
0 \to \mathcal{I}/\mathcal{I}^2 \to \Omega_{\PP^N}|_X \to \Omega_X \to 0
\]
is exact also on the left.
Applying Proposition \ref{divided cpx}, we obtain the complex
\[
0 \to D_t(\mathcal{I}/\mathcal{I}^2) \xrightarrow{\ d^{-1} \ } C^0 \xrightarrow{\ d^0\ } C^1 \xrightarrow{\ d^1\ } C^2 \xrightarrow{\ d^2\ } \cdots ,
\]
where $C^i : = D_{t-i}(\Omega_{\PP^N}|_X) \otimes \Omega_X^{i}$.
Since $\Omega_X$ is locally free on the smooth locus $U \subseteq X$, the complex is exact on $U$.

For every $i\ge 0$, we define the subsheaves $B^i, Z^i$ of $C^i$ by $B^i:=\mathrm{Im}\; d^{i-1}$ and $Z^i:=\mathrm{Ker}\; d^i$, respectively.
We note that $B^i|_U = Z^i|_U$ and they are locally free $\sO_U$-modules for every $i$.
Indeed, since $C^i|_U$ is locally free, it follows from the exact sequence
\[ 0 \to B^i|_U \to C^i|_U \to B^{i+1}|_U \to 0 \]
that $B^{i}|_U$ is locally free if so is $B^{i+1}|_U$.
Combining with the fact that $C^i|_U=0$ for all sufficiently large $i$, we conclude that $B^i|_U$ are locally free for all $i$.
We also note that $d^{-1}$ is injective, because it is injective on $U$ and $D_t(\mathcal{I}/\mathcal{I}^2)$ is torsion free.
In particular, we have $B^0 \cong D_t(\mathcal{I}/\mathcal{I}^2)$.

Next, we will verify that $H^i(X, C^j (-\ell))=0$ for every $i, j \ge 0$ with $i+j<d-s$.
Fix such integers $i,j$.
Applying Proposition \ref{divided cpx} to the Euler sequence
\[
0 \to \Omega_{\PP^N}|_X \to \sO_X(-1)^{\oplus N+1} \to \sO_X \to 0
\]
restricted to $X$, we obtain the exact sequence
\[
0 \to D_{t-j}(\Omega_{\PP^N}|_X) \to D_{t-j}(\mathcal{E}) \to D_{t-j-1}(\mathcal{E}) \to 0,
\]
where we write $\mathcal{E}:=\sO_X(-1)^{\oplus N+1}$.
Since $D_{t-j-1}(\mathcal{E})$ is locally free, we have the exact sequence 
\[
H^{i-1}(X, D_{t-j-1}(\mathcal{E}) \otimes \Omega^{j}_X (-\ell)) \to H^i(X,  C^j(-\ell)) \to H^i(X,D_{t-j}(\mathcal{E}) \otimes \Omega^{j}_X (-\ell)).
\]
On the other hand, since $D_{m}(\mathcal{E}) = (S^m(\mathcal{E}^*))^*$ is a direct sum of $\sO_X(-m)$ for every integer $m$, it follows by assumption that
\[
H^{i-1}(X, D_{t-j-1}(\mathcal{E}) \otimes \Omega^{j}_X (-\ell))=0, \quad 
H^{i}(X, D_{t-j}(\mathcal{E}) \otimes \Omega^{j}_X (-\ell))=0.
\]
Thus, we have $H^i(X, C^j (-\ell))=0$.

The assertion in the proposition is clear if $s=d-1$, because 
\[
H^0(X, D_t(\mathcal{I}/\mathcal{I}^2)(-\ell)) \cong H^0(X, B^0 (-\ell)) \subseteq H^0(X, C^0(-\ell)) =0. 
\]
We therefore assume that $s \le d-2$. 
Let $Y : = X \setminus U$ be the singular locus of $X$.
We will verify that $B^i=Z^i$ and $\depth_Y(B^i) : = \min \{ \depth_x(B^i) \mid x \in Y \} \ge d-s-i$ for each $i=0,1, \dots, d-s-2$.
When $i=0$, since $B^0 \cong D_t(\mathcal{I}/\mathcal{I}^2)$ is locally free and $X$ is Cohen-Macaulay, $\depth_Y(B^0) =\mathrm{Codim}(Y, X) = d-s$, and since $B^0$ is reflexive and the inclusion $B^0 \subseteq Z^0$ is the identity on $U$, we have $B^0 =Z^0$. 
When $1 \le i \le d-s-2$, by induction on $i$, we may assume that $\depth_Y(B^{i-1}) \ge d-s-i+1$ and the sequence
\[
0 \to B^{i-1} \to C^{i-1} \to B^i \to 0
\]
is exact. 
It follows from Lemma \ref{depth} that $\depth_Y(C^{i-1}) \ge d-s-i+1$, which implies $\depth_Y(B^i) \ge d-s-i$.
Combining with the fact that $B^i|_U$ is locally free, $B^i$ satisfies the Serre condition $(S_2)$.
Therefore, $B^i$ is reflexive, which proves $B^i=Z^i$.

Fix an integer $0 \le i <d-s$.
The exact sequence 
\[
0 \to B^0 \to C^0 \to B^1 \to 0
\]
induces the exact sequence
\[
H^{i-1}(X, C^0(-\ell)) \to H^{i-1}(X, B^1(-\ell)) \to H^i(X, B^0(-\ell)) \to H^i(X, C^0(-\ell)).
\]
Since $H^{i-1}(X,C^0(-\ell))=H^i(X, C^0(-\ell))=0$, we have 
\[
H^{i}(X, B^0(-\ell)) \cong H^{i-1}(X, B^1(-\ell)).
\]
Similarly, we have
\[
H^i(X, B^0(-\ell)) \cong H^{i-1}(X, B^1(-\ell)) \cong H^{i-2}(X, B^2(-\ell)) \cong \cdots \cong H^0(X, B^i(-\ell)).
\]
Since $B^0 \cong D_t(\mathcal{I}/\mathcal{I}^2)$ and $B^i \subseteq C^i$, we have
\[
H^i(X, D_t(\mathcal{I}/\mathcal{I}^2)(-\ell)) \cong H^0(X, B^i(-\ell)) \subseteq H^0(X, C^i(-\ell))=0,
\]
which completes the proof of the proposition.
\end{proof}

\begin{thm}\label{vanishing thickening}
Let $t \ge 1$ be an integer, and assume the following two conditions hold. 
\begin{enumerate}
\item [$\textup{(i)}$] $H^i(X, \Omega^{j}_X(-\ell))=0$ for every integers $\ell \ge 1$ and $i, j \ge 0$ with $i+j <d-s$.
\item [$\textup{(ii)}$] One of the following conditions is satisfied:
\begin{enumerate}
\item [$\textup{(a)}$] $k$ is of characteristic zero, 
\item [$\textup{(b)}$] $k$ is of characteristic $p \ge t$, 
\item [$\textup{(c)}$] the normal bundle $\mathcal{N}_{X/\PP^N}$ is a direct sum of line bundles.
\end{enumerate}
\end{enumerate}
Then for every $i<d-s$ and $\ell \ge 1$, we have 
\[H^i(X_t, \sO_{X_t}(-\ell))=0.\]
\end{thm}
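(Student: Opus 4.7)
The plan is to proceed by induction on $t$, reducing the assertion to Proposition~\ref{vanishing D} via the filtration $\mathcal{I} \supset \mathcal{I}^2 \supset \cdots \supset \mathcal{I}^t$ on $X_t$. The base case $t=1$ is $X_1 = X$, and setting $j=0$ in assumption (i) gives $H^i(X, \sO_X(-\ell))=0$ for all $i < d-s$ and $\ell \ge 1$, which is exactly what is required.

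For the inductive step, I would exploit the short exact sequence
\[
0 \to \mathcal{I}^{t-1}/\mathcal{I}^t \to \sO_{X_t} \to \sO_{X_{t-1}} \to 0
\]
on $\PP^N_k$. Twisting by $\sO_{\PP^N_k}(-\ell)$ and chasing the long exact sequence in cohomology, the inductive hypothesis for $X_{t-1}$ reduces the problem to showing $H^i(X,(\mathcal{I}^{t-1}/\mathcal{I}^t)(-\ell)) = 0$ for all $i < d-s$ and $\ell \ge 1$ (noting that $\mathcal{I}^{t-1}/\mathcal{I}^t$ is supported on $X$, so its cohomology on $\PP^N_k$ and on $X$ coincide).

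The next step is to identify the graded piece. Since $X \subset \PP^N_k$ is locally complete intersection, the embedding is a regular immersion, so $\mathcal{I}/\mathcal{I}^2$ is locally free and $\mathcal{I}^{t-1}/\mathcal{I}^t \cong S^{t-1}(\mathcal{I}/\mathcal{I}^2)$. Under any of the hypotheses in (ii) --- in case (b) we have $p \ge t > t-1$, and in case (c) the normal bundle $\mathcal{N}_{X/\PP^N_k} \cong (\mathcal{I}/\mathcal{I}^2)^\ast$ is a direct sum of line bundles if and only if $\mathcal{I}/\mathcal{I}^2$ is --- Lemma~\ref{div sym} applied with $m = t-1$ and $\mathcal{F} = \mathcal{I}/\mathcal{I}^2$ produces an isomorphism $S^{t-1}(\mathcal{I}/\mathcal{I}^2) \cong D_{t-1}(\mathcal{I}/\mathcal{I}^2)$. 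Applying Proposition~\ref{vanishing D} with $t-1$ in place of $t$, and using assumption (i), then yields $H^i(X, D_{t-1}(\mathcal{I}/\mathcal{I}^2)(-\ell)) = 0$ for $i < d-s$, closing the induction.

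The main obstacle --- and really the reason for all the scaffolding in the preceding subsection on divided powers --- is the mismatch between symmetric and divided powers in positive characteristic. The thickening filtration delivers symmetric powers, whereas the Koszul-style resolution in Proposition~\ref{divided cpx}, obtained by dualizing the Koszul complex attached to the conormal sequence, naturally lives in the world of divided powers. The three alternatives in (ii) are exactly the hypotheses under which these two worlds coincide, and beyond invoking Lemma~\ref{div sym} there is nothing delicate left to check.
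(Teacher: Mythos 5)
Your proposal is correct and follows essentially the same route as the paper's own proof: induction on $t$ via the sequence $0 \to \mathcal{I}^{t-1}/\mathcal{I}^t \to \sO_{X_t} \to \sO_{X_{t-1}} \to 0$, the identification $\mathcal{I}^{t-1}/\mathcal{I}^t \cong S^{t-1}(\mathcal{I}/\mathcal{I}^2) \cong D_{t-1}(\mathcal{I}/\mathcal{I}^2)$ under hypothesis (ii) via Lemma~\ref{div sym}, and then Proposition~\ref{vanishing D}. The only (harmless) difference is that you deduce the base case $t=1$ directly from hypothesis (i) with $j=0$, whereas the paper cites Proposition~\ref{kodaira}(2); your version is if anything cleaner, since it uses only the stated hypotheses.
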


\begin{proof}
Fix integers $\ell, t \ge 1$ and $0 \le i <d-s$.
If $t=1$, then the assertion follows from Proposition \ref{kodaira} (2).
We consider the case where $t \ge 2$.
By induction on $t$, we may assume that $H^i(X_{t-1}, \sO_{X_{t-1}}(-\ell))=0$.

The exact sequence
\[
0 \to \mathcal{I}^{t-1}/\mathcal{I}^{t} \to \sO_{X_t} \to \sO_{X_{t-1}} \to 0
\]
induces the exact sequence
\[
H^i(X, (\mathcal{I}^{t-1}/\mathcal{I}^t)(-\ell)) \to H^i(X_t, \sO_{X_t}(-\ell)) \to H^i(X_{t-1}, \sO_{X_{t-1}}(-\ell)).
\]
On the other hand, it follows from \cite[II. Theorem.8.21.A(e)]{Hart} and Lemma \ref{div sym} that 
\[
\mathcal{I}^{t-1}/\mathcal{I}^t \cong S^{t-1}(\mathcal{I}/\mathcal{I}^2) \cong D_{t-1}(\mathcal{I}/\mathcal{I}^2).
\]
Therefore, by Proposition \ref{vanishing D}, we have $H^i(X, (\mathcal{I}^{t-1}/\mathcal{I}^t)(-\ell))=0$. 
Combining with the above exact sequence, we have $H^i(X_t, \sO_{X_t}(-\ell))=0$.
\end{proof}

\begin{rem}\label{alternative proof}
As an application of Theorem \ref{vanishing thickening}, together with Theorem \ref{Hamm}, we obtain an alternative proof of Theorem \ref{thicknings in char. 0}. 
\end{rem}

\begin{thm}\label{thicknings in char. p}
Suppose that $t \ge 1$ is an integer, $k$ is of characteristic $p\ge 5$ and $X$ is globally $F$-regular of dimension $d \le 3$. 
Assume in addition that one of the following conditions is satisfied: 
\begin{enumerate}
\item [$\textup{(i)}$] $t$ is less than or equal to $p$, 
\item [$\textup{(ii)}$] the normal bundle $\mathcal{N}_{X/\PP^N}$ is a direct sum of line bundles.
\end{enumerate}
Then for every integers $i < d-s$, $\ell \ge 1$ and $t \ge 1$, we have 
\[H^i(X_t, \sO_{X_t}(-\ell))=0.\]
\end{thm}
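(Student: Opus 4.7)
The plan is to deduce the conclusion directly from Theorem \ref{vanishing thickening} applied to $X$. That theorem has two hypotheses: (i) the Akizuki-Nakano vanishing $H^i(X, \Omega^j_X(-\ell)) = 0$ for $\ell \ge 1$ and $i + j < d - s$, and (ii) one of the structural alternatives (a)--(c). The second of these is immediate from the present hypotheses: condition (i) of the current theorem supplies alternative (b) of Theorem \ref{vanishing thickening}, and condition (ii) supplies (c). So everything reduces to verifying the Akizuki-Nakano vanishing for $X$.

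For $d = 3$, this is exactly what Corollary \ref{GFR case2} provides, once we check its hypotheses with $\mathcal{H} := \sO_X(\ell)$ for each integer $\ell \ge 1$. Since $\sO_X(1) = \sO_{\PP^N}(1)|_X$ is the restriction of a very ample line bundle, $\sO_X(\ell)$ is itself very ample for every $\ell \ge 1$; in particular it is globally generated, and the induced morphism $\Phi_{\sO_X(\ell)}$ is a closed immersion, which is \emph{a fortiori} generically étale onto its image. The remaining hypothesis of Corollary \ref{GFR case2}, namely ``$s = 0$ or $p \ge 5$,'' is met because we have imposed $p \ge 5$. The same strategy applies for $d \le 2$ using the corresponding lower-dimensional Akizuki-Nakano statements: for a globally $F$-regular curve (which is forced to be $\PP^1$) the needed vanishings are classical, while in dimension $2$ one combines Theorem \ref{dim 2} for reflexive differentials with the fact that $\Omega^j_X$ is torsion free on an lci variety (Lemma \ref{depth}), so that the natural injection $\Omega^j_X \hookrightarrow \Omega^{[j]}_X$ transports the vanishing.

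Once condition (i) of Theorem \ref{vanishing thickening} is verified, that theorem immediately yields $H^i(X_t, \sO_{X_t}(-\ell)) = 0$ for $i < d - s$, $\ell \ge 1$, and $t \ge 1$, which is the statement. No new technical obstacle arises: the hard work has already been carried out in Corollary \ref{GFR case2} (the Akizuki-Nakano vanishing on lci globally $F$-regular 3-folds in characteristic $p \ge 5$) and in Theorem \ref{vanishing thickening} (the reduction from the thickening $X_t$ to $X$ via the divided-power complex of $\mathcal{I}/\mathcal{I}^2$, which is where hypotheses (i)--(ii) of the present theorem enter through Lemma \ref{div sym}). The present proof is therefore essentially the act of assembling these two results.
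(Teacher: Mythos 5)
Your proposal is correct and follows exactly the paper's own route: the paper's proof is the one-line citation of Theorem \ref{dim 2}, Corollary \ref{GFR case2} and Theorem \ref{vanishing thickening}, and you have simply spelled out the verification of the hypotheses (taking $\mathcal{H}=\sO_X(\ell)$, matching conditions (i)/(ii) with alternatives (b)/(c), and handling $d\le 2$ via torsion-freeness from Lemma \ref{depth}).
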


\begin{proof}
The assertion follows from Theorem \ref{dim 2}, Corollary \ref{GFR case2} and Theorem \ref{vanishing thickening}.
\end{proof}

\begin{rem}
If $s=0$, then by Remark \ref{GFR case1} and Theorem \ref{vanishing thickening}, Theorem \ref{thicknings in char. p} holds without assuming that $p \ge 5$. 
\end{rem}

\begin{ques}
Does the assertion in Theorem \ref{thicknings in char. p} hold if we drop the assumption that $t$ is less than or equal to $p$ or $\mathcal{N}_{X/\PP^N}$ is a direct sum of line bundles? 
\end{ques}



\begin{thebibliography}{99}
\bibitem{Art} M.~F.~Artin, Algebraic approximation of structures over complete local rings, Publ. Math. IH\'{E}S \textbf{36} (1969), 23--58.

\bibitem{BBLSZ} B.~Bhatt, M.~Blickle, G.~Lyubeznik, A.~Singh and W.~Zhang, Stabilization of the cohomology of thickenings, Amer. J. Math. \textbf{141} (2019), no. 2, 531--561.

\bibitem{Bir}
C.~Birkar, Existence of flips and minimal models for 3-folds in char $p$, Ann. Sci. \'{E}c. Norm. Sup\'{e}r. (4) \textbf{49} (2016), 169--212.

\bibitem{BK} M.~Brion and S.~Kumar, \textit{Frobenius splitting methods in geometry and representation theory}, Progress in Mathematics, 231, Birkh\"auser Boston, Inc., Boston, MA, 2005.

\bibitem{DI} P.~Deligne and L.~Illusie, Rel\`evements modulo $p^2$ et d\'ecomposition du complexe de de Rham, Invent. Math. \textbf{89} (1987), no. 2, 247--270. 

\bibitem{DN} A.~De Stefani and L.~N\'{u}\~{n}ez-Betancourt, $F$-thresholds of graded rings, Nagoya Math. J. \textbf{229} (2018), 141--168.

\bibitem{Fr} R.~Friedman, Simultaneous resolution of threefold double points, Math. Ann. \textbf{274} (1986), 671--689. 

\bibitem{Gla}
D.~Glassbrenner,
Strong $F$-regularity in images of regular rings, 
Proc. Amer. Math. Soc. \textbf{124} (1996), no.2, 345--353.


\bibitem{GOST}  Y.~Gongyo, S.~Okawa, A.~Sannai and S.~Takagi, Characterization of varieties of Fano type via singularities of Cox rings, J. Algebraic. Geom. \textbf{24} (2015), 159--182. 

\bibitem{GT}  Y.~Gongyo and S.~Takagi, Koll\'ar's injectivity theorem for globally $F$-regular varieties, Eur. J. Math. \textbf{5} (2019), no.3, 872--880.

\bibitem{Gr} P.~Graf, Differential forms on log canonical spaces in positive characteristic,
J. London Math. Soc. \textbf{104} (2021), no.5, 2208--2239.

\bibitem{GKP} D.~Greb, S.~Kebekus and T.~Peternell, Reflexive differential forms on singular spaces. Geometry and cohomology. J. Reine Angew. Math. \textbf{697} (2014), 57--89. 

\bibitem{HaWi} C.~Hacon and J.~Witaszek, The Minimal Model Program for threefolds in characteristic five, arXiv:1911.12895. 

\bibitem{HX} C.~Hacon and C.~Xu, On the three dimensional minimal model program in positive characteristic, J. Amer. Math. Soc. \textbf{28} (2015), no.3, 711--744.

\bibitem{Hamm} H.~A.~Hamm, Depth and Differential Forms, \textit{Singularities and Computer Algebra}. 211--232, London Math. Soc. Lecture Note Ser. 324, Cambridge Univ. Press, Cambridge, 2006.

\bibitem{HW} N.~Hara and K.-i.~Watanabe, $F$-regular and $F$-pure rings vs. log terminal and log canonical singularities, J. Alg. Geom. \textbf{11} (2002), 363--392.

\bibitem{HWY} N.~Hara, K.-i.~Watanabe and K.-i.~Yoshida, 
Rees algebras of $F$-regular type, J. Algebra \textbf{247} (2002), no. 1, 191--218. 

\bibitem{Hart} R.~Hartshorne, \textit{Algebraic Geometry}, Graduate Texts in Mathematics, 52, Springer-Verlag, Berlin, 1977.

\bibitem{Il} L.~Illusie, Complexe de de\thinspace Rham-Witt et cohomologie cristalline, Ann. Sci. \'Ecole Norm. Sup. (4) \textbf{12} (1979), no. 4, 501--661.

\bibitem{Il2} L.~Illusie, Frobenius et d\'eg\'en\'erescence de Hodge, Introduction \`a la th\'eorie de Hodge, 113--168, Panor. Synth\`eses, 3, Soc. Math. France, Paris, 1996.

\bibitem{Kaw} T.~Kawakami, Bogomolov-Sommese vanishing for threefolds with negative Kodaira dimension in positive characteristic, arXiv:1911.08240, to appear in Math. Z. 

\bibitem{Ki} J.~Kirti, Exotic torsion, Frobenius splitting and the slope spectral sequence, Canad. Math. Bull. \textbf{50} (2007), no. 4, 567--578.

\bibitem{Ko} J.~Koll\'ar, \textit{Singularities of the minimal model program}, Cambridge Tracts in Mathematics, 200, Cambridge University Press, Cambridge, 2013. 


\bibitem{KSS} S.~Kov\'acs, K.~Schwede and K.~E.~Smith, The canonical sheaf of Du Bois singularities, Adv. Math. \textbf{224} (2010), no. 4, 1618--1640. 

\bibitem{La} R.~Lazarsfeld, \textit{Positivity in Algebraic Geometry I}, Ergebnisse der Mathematik und ihrer Grenzgebiete. 3. Folge, A Series of Modern Surveys in Mathematics, 48, Springer-Verlag, Berlin, 2004. 

\bibitem{NL} Y.~Lee and N.~Nakayama, Simply connected surfaces of general type in positive characteristic via deformation theory, Proc. London Math. Soc. (3) \textbf{106} (2013) 225--286.  

\bibitem{Lip} J.~Lipman, Free derivation modules on algebraic varieties, Amer. J. Math. \textbf{87} (1965), no.4, 874--898.

\bibitem{LT} J.~Lipman and B.~Teissier, Pseudorational local rings and a theorem of Brian\c{c}on-Skoda about integral closures of ideals, Michigan Math. J. \textbf{28} (1981), no. 1, 97--116.

\bibitem{Mat} H.~Matsumura, \textit{Commutative Ring Theory}, Translated from the Japanese by M. Reid, Second edition, Cambridge Studies in Advanced Mathematics, \textbf{8}, Cambridge University Press, Cambridge, 1989.

\bibitem{MR} V.~B.~Mehta and A.~Ramanathan, Splitting and Cohomology Vanishing for Schubert Varieties, Ann. of Math. (2) \textbf{122} (1) (1985), no.1, 27--40. 

\bibitem{Na} Y.~Namikawa, Smoothing Fano $3$-folds, J. Algebraic Geom. \textbf{6} (1997), no. 2, 307--324.

\bibitem{Ni} F.~Nironi, Grothendieck duality for Deligne-Mumford stacks, arXiv:0811.1955.

\bibitem{Ol} M.~Olsson, \textit{Algebraic spaces and stacks}, American Mathematical Society Colloquium Publications 62, American Mathematical Society, Providence (2016)

\bibitem{Ol2} M.~Olsson, \textit{Crystalline cohomology of algebraic stacks and Hyodo--Kato cohomology},
Ast\'{e}risque, 316, Soci\'{e}t\'{e} math\'{e}matique de France, 2007.

\bibitem{Sa} T.~Sano, Deformations of weak $\mathbb{Q}$-Fano 3-folds, Internat. J. Math. \textbf{29} (2018), no. 7, 1850049, 23 pp.

\bibitem{SS} K.~Schwede and K.~E.~Smith, Globally $F$-regular and log Fano varieties, 
Adv. Math. \textbf{224} (2010), 863--894.

\bibitem{Ser} E.~Sernesi, \textit{Deformations of algebraic schemes}, Grundlehren der Mathematischen Wissenschaften, 334. Springer-Verlag, Berlin, 2006.

\bibitem{Sm1} K.~E.~Smith, $F$-rational rings have rational singularities, Amer. J. Math. \textbf{119} (1997), no.1, 159--180. 

\bibitem{Sm2} K.~E.~Smith, Globally $F$-regular varieties: applications to vanishing theorems for quotients of Fano varieties, Michigan Math. J. \textbf{48} (2000), 553--572.

\bibitem{Sta} T.~Stacks Project Authors, The Stacks Project.

\bibitem{St} J.~H.~M.~Steenbrink, Vanishing theorems on singular spaces, Differential systems and singularities (Luminy, 1983), Ast\'erisque No. 130 (1985), 330--341.

\bibitem{Vi} A.~Vistoli, The deformation theory of local complete intersections, arXiv:alg-geom/9703008. 

\bibitem{Wey} J.~Weyman, \textit{Cohomology of vector bundles and syzygies}, Cambridge Tracts in Math., 149, Cambridge Univ. Press, 2003.


\end{thebibliography}
\end{document}